\patchcmd{\@thm}{\thm@headfont{\scshape}}{\thm@headfont{\scshape\bfseries}}{}{}
\patchcmd{\@thm}{\thm@notefont{\fontseries\mddefault\upshape}}{}{}{}
\newtheorem{theorem}{Theorem}[section]
\newtheorem{proposition}[theorem]{Proposition}
\newtheorem{lemma}[theorem]{Lemma}
\newtheorem{corollary}[theorem]{Corollary}
\newtheorem{remark}[theorem]{Remark}
\newtheorem{definition}[theorem]{Definition}
\newtheorem{situation}[theorem]{Situation}
\newcommand{\integers}{\mathbb{Z}}
\newcommand{\rationals}{\mathbb{Q}}
\newcommand{\reals}{\mathbb{R}}
\newcommand{\comment}[1]{}
\newcommand*{\spec}{\text{Spec}}
\newcommand*{\pic}{\text{Pic}}
\renewcommand{\spec}{\text{Spec}}
\newcommand{\etale}{étale}
\renewcommand{\hom}{\text{Hom}}
\newcommand{\ext}{\text{Ext}}
\newcommand{\etalecohomology}[3]{\text{H}^{#1}_{\acute{e}t}(#2,#3)}
\newcommand{\homology}[3]{\text{H}_{#1}(#2,#3)}
\newcommand{\cohomology}[3]{\text{H}^{#1}(#2,#3)}
\newcommand{\structuresheaf}{\mathcal{O}}
\newcommand*{\sheafhom}{\mathop{\mathcal{H}\mathit{om}}}
\newcommand{\structure}{\mathcal{O}}
\tikzset{close/.style={outer sep=-2pt}}
\title{Weil pairing on twisted curves}
\author{Ashwin Deopurkar}
\address{Ashwin Deopurkar}
\email{ard2144@columbia.edu}
\date{\today} 
\begin{document}    
\maketitle 
\NewDocumentCommand\gmsheaf{o}{\IfNoValueTF{#1}{\mathbf{G}_{m}}{\mathbf{G}_{m,{#1}}}}
\section{Introduction}
{
\newcommand{\autgroup}{\integers/ 2 \integers}
\newcommand{\twistedcurve}{\mathcal{X}}
\newcommand{\groupstack}{M_g^{\structuresheaf,2}}
\newcommand{\groupstackcompact}{\overline{M_g}^{\structuresheaf,2}}
\begin{sloppypar}
Let $C$ be a smooth, proper, connected curve of genus $g \geq 2$ over an algebraically closed field of characteristic different from two. We have the Weil pairing on $C$ which is a perfect, alternating pairing on $\pic(C)[2]$. It is natural to consider this pairing as $C$ varies in the moduli space $M_g$ of smooth, proper curves. The natural object to consider is the moduli space 
  $$\groupstack = \left. \{(C,L,f) | f: L^{\otimes 2} \to \structuresheaf_{C} \} \middle/ \cong \right. $$ 
  that parameterizes smooth, proper curves of genus $g$ together with a $2\mbox{-torsion}$ line bundle. The space $\groupstack$ is a Deligne-Mumford stack that forms a finite \etale{} cover of $M_g$ of degree $2^{2g}$ (See \cite{chiodo_stable_twisted_and_r_spin}). In this setting, we can realize the Weil pairing as a pairing on the fibers of $\groupstack$ over $M_g$. A natural question to ask is whether we can extend the pairing on a compactification of $M_g$. We may consider the category $\groupstackcompact$ that parameterizes stable curves together with a two torsion line bundle. It is an \etale{} Deligne-Mumford stack over the space $\overline{M_g}$ of stable curves; however, the morphism ${\groupstackcompact \to \overline{M_g}}$ is not proper (See Example 1.1, \cite{chiodo_stable_twisted_and_r_spin}). Thus, we fail to obtain a finite \etale{} cover of $\overline{M_g}$. This defect is rectified by Chiodo using the category of Abramovich and Vistoli's twisted curves (See \cite{chiodo_stable_twisted_and_r_spin}). We have a compact moduli of twisted curves over which the two torsion line bundles form a finite, \etale{} group sheaf.

  In the present work, we extend the Weil pairing to the moduli of twisted curves. Let $\twistedcurve$ be a twisted curve with the coarse space $C$. Let $H$ denote the subgroup of $\pic(C)[2]$ of those line bundles which are trivial on the normalization of $C$. We prove that $H$ is naturally Weil dual to the quotient $\pic(\twistedcurve)[2]/\pic(C)[2]$. Both $H$ and $\pic(\twistedcurve)[2]/\pic(C)[2]$ have a combinatorial description in terms of the dual graph of $\twistedcurve$. Moreover, the Weil pairing between the two can be seen as a combinatorial integration-homology pairing on the dual graph (Theorem \ref{thm_combinatorial_pairing_induced_by_Weil}).

In the last section, we look at the interplay of tropical and algebraic geometry that emerges when we have an arithmetic surface. We give an algebro-geometric argument to prove that the kernel of the tropical specialization is isotropic for the Weil pairing when the dual graph is totally degenerate (Proposition \ref{prop_ker_trop_isotropic}). This was proved earlier using tropical methods (See \cite{Jensen_Yoav_kernel_is_isotropic}). Finally, we look at arithmetic surfaces where the special fiber is a twisted curve. We obtain a combinatorial consequence that the group of $r\mbox{-torsion}$ points of the Picard group of a metric graph with unit edge lengths can be realized as divisors supported on a set which is the union of the original vertices of the graph and interior vertices that subdivide each non-separating edge into $r$ equal parts (Proposition \ref{prop_divide_edges_into_r_parts}).
\end{sloppypar}
}
\section{Preliminaries}
{
\newcommand{\twistedcurve}{\mathcal{X}} 
\newcommand{\character}[2]{{\theta}^{#1}_{#2}}
\newcommand{\bgspace}[1]{\text{B}#1}
\newcommand{\stab}[1]{G_{#1}}

Let $k$ be an algebraically closed field. For a finite group $G$, we denote by $\bgspace{G}$ the quotient stack $\left[ \spec(k) \middle/ G \right]$. By $\gmsheaf[X]$ we denote the sheaf of invertible functions on $X$ on the \etale{} site of $X$. We omit the subscript and simply write $\gmsheaf$ when the object is clear from the context. Similarly, by $\mu_r$ we mean the sheaf of $r^{th}$ roots of unity. We assume all stacks to be separated.  

Let $M,N$ be modules over a ring $R$. A pairing on $M \times N$ is a $R\mbox{-bilinear}$ map ${M \times N \xrightarrow{e(-,-)} R}$. By the kernel of a pairing in the first factor we mean the kernel of the associated map ${M \to \hom_{R}(N,R)}$. We say a pairing is perfect, if this map is an isomorphism. We say a pairing is non-degenerate if the kernel of the pairing in both the factors is zero. By a pairing on $M$, we mean a pairing on ${M \times M}$. We say a pairing on $M$ is alternating if we have ${e(m,m) = 0}$ for every $m \in M$. It is straightforward to see that for an alternating pairing, we have ${e(m_1,m_2) = -e(m_2,m_1)}$. If we have a pairing on $M$, then a submodule $M' \subset M$ is said to be isotropic, if the pairing restricted to $M'$ is identically zero. 

By a graph we mean a finite, undirected graph with a finite number of vertices and edges. We allow multiple edges between two vertices as well as loop edges. 

\begin{definition}\label{def_orbifold_curve_definition}
A smooth orbifold curve of genus $g$ over $k$ is a connected, proper, smooth, tame Deligne-Mumford stack of dimension $1$ over $k$ which has trivial generic stabilizer and whose coarse space is a smooth, proper curve of genus $g$.
\end{definition}
\begin{sloppypar}
  Let ${\twistedcurve \xrightarrow{f} C}$ be a smooth orbifold curve over $k$ with coarse space $C$. It is known that $\twistedcurve$ is obtained from its coarse space by a finite number of root constructions (See chapter 10.3, \cite{olsson_algebraic_spaces_stacks_book} for root stacks). That is, we have
$$ \twistedcurve \cong C[p_1 / d_1] \times_{C} C[p_2 / d_2] \times_{C} \cdots \times_{C} C[p_m / d_m] $$
where $\{p_i\}$ are closed points of $C$, $\{d_i\}$ are positive integers coprime with the characteristic of $k$, and $C[p_i/d_i]$ is the root stack with the stabilizer $\integers / d_i \integers$ at $p_i$. We denote the stabilizer $\integers / d_i \integers$ at $p_i$ by $\stab{p_i}$. The \etale{} local structure of $\twistedcurve \xrightarrow{f} C$ at a point $p_i$ is given by:
  $$ \begin{tikzcd}
    \left[ \spec(k[t]) \middle/ G_{p_i} \right] \arrow{r}{f} &    \spec(k[t^{d_i}]) 
  \end{tikzcd}$$
  where the generator of $G_{p_i}$ acts on $\spec(k[t])$ by $t \to \zeta_{d_i} t$ for some primitive $d_i^{th}$ root $\zeta_{d_i}$ of unity. For each point $p_i$ with a non-trivial stabilizer, the orbifold admits a line bundle $L_i$ together with an isomorphism of line bundles ${L_i^{ \otimes d_i} \to f^*(\structuresheaf_{C}(p_i))}$. Like schematic proper curves, we have a notion of a degree of a line bundle on $\twistedcurve$. However, the degree is not necessarily integral but takes values in $1/d \integers$, where $d$ is the least common multiple of the numbers $\{d_i\}$. For example, the degree of $L_i$ equals $1/d_i$. We have a closed immersion 
  ${p_i \times_{C} \twistedcurve \cong \bgspace{\stab{p_i}} \xrightarrow{j} \twistedcurve}$. The Picard group of $\bgspace{\stab{p_i}}$ is isomorphic to $\hom(\stab{p_i},k^*) \cong \integers / d_i \integers$ and is generated by $j^*(L_i)$. Thus we have a morphism ${\character{\twistedcurve}{p_i}: \pic(\twistedcurve) \to \integers / d_i \integers}$  which maps a line bundle $L$ to the unique integer $\character{\twistedcurve}{p_i}(L)$ modulo $d_i$ such that ${j^*(L) \cong j^*(L_i)^{\otimes \character{\twistedcurve}{p_i}(L)}}$. We refer to $\character{\twistedcurve}{p_i}(L)$ as the character of $L$ at $p_i$. The Picard group of $\twistedcurve$ is described by the following exact sequence (See Corollary 4.15, \cite{poma2013etale}):
  $$ 0 \to \pic(C) \xrightarrow{f^*} \pic(\twistedcurve) \xrightarrow{\oplus \character{\twistedcurve}{p_i}} \bigoplus_{i = 1}^m \frac{\integers}{d_i\integers} \to 0 .$$
  \begin{remark}\label{remark_non_integral_degree}
    For a line bundle $L$ on $\twistedcurve$, we have $\deg(L) \in 1/d \integers$ and ${\deg(L) - \Sigma_{i =  1}^{m} \character{\twistedcurve}{p_i}(L)}$ is an integer (See Proposition 2.8, \cite{chiodo_stable_twisted_and_r_spin}).
  \end{remark}
\noindent  We have $\etalecohomology{1}{\twistedcurve}{\gmsheaf} \cong \pic(\twistedcurve)$ and $\etalecohomology{2}{\twistedcurve}{\gmsheaf} = 0 $ (See Corollary 4.15, \cite{poma2013etale}).
\end{sloppypar}
} 

\section{Twisted curves}\label{sec_twisted_curves}
{
\newcommand{\autgroup}{\integers/ 2 \integers}
\newcommand{\curvedown}{\mathcal{X}}
\newcommand{\character}[2]{\theta^{#1}_{#2}}
\newcommand{\classifyingspace}[1]{\text{B}#1} 
\newcommand{\twistedcurve}{\mathcal{X}}
\newcommand{\graphpairing}[3]{\langle #2,#3 \rangle_{#1}}
\newcommand{\stab}[1]{G_{#1}}

\begin{definition}\label{def_of_a_twisted_curve}
  A (balanced) twisted curve of genus $g$ over $k$ is a connected, proper tame Deligne-Mumford stack $\twistedcurve$ over $k$ of dimension one which admits a coarse space $C$ that is a nodal curve of arithmetic genus $g$. The morphism $\twistedcurve \to C$ is an isomorphism except at the nodes of $C$ where the \etale{} local structure of $\twistedcurve$ is given by 
$$ \left[\spec(k[x,y]/xy) \middle/ \frac{\integers}{l \integers} \right]$$ 
where the generator of $\integers/ l \integers$ acts by $x \to \zeta_l x, y \to \zeta_l^{-1}y$ for some primitive $l^{th}$ root $\zeta_l$ of unity.
\end{definition}
\begin{remark}
The last condition that $\integers / l \integers$ acts by inverse characters on the two branches is called balancing condition but we omit this adjective because we always consider twisted curves which are balanced.
\end{remark}
\begin{sloppypar}
  Let $\twistedcurve \xrightarrow{f} C$ be a twisted curve of genus $g$ with the coarse space $C$. Let $E$ denote the set of nodal points of $C$. For a point $e$ in $E$, let $\stab{e}$ denote the stabilizer of $e$. The normalization ${\nu : \widehat{\twistedcurve} \to \twistedcurve}$ is ${\nu : \widehat{\twistedcurve} = \widehat{C}\times_{C}\twistedcurve \to \twistedcurve}$ where ${\widehat{C} \to C}$ is the normalization of $C$. The stack $\widehat{\twistedcurve}$ is a disjoint union of smooth orbifold curves as in Definition \ref{def_orbifold_curve_definition}. The points of $\widehat{\twistedcurve}$ with non-trivial stabilizers are precisely those which lie above the nodes of $C$. The degree of a line bundle $L$ on $\twistedcurve$ is defined as the sum of the degrees of $\nu^*(L)$ on the connected components of the normalization $\widehat{\twistedcurve}$. For $e \in E$, we have a closed immersion ${e \times_{C} \twistedcurve \cong BG_e \to \twistedcurve}$.
  \begin{lemma}\label{lemma_pic_of_twisted_exact_sequence}
    (Proposition 5.2, \cite{poma2013etale}) The Picard group of $\twistedcurve$ is described by the following short exact sequence:
    \begin{align*}  
      0 \to \pic(C) \to \pic(\twistedcurve) \to \bigoplus_{e \in E} \pic(BG_e) \to 0.
    \end{align*}
Moreover, for $e \in E$, the group $\pic(BG_e) \cong \hom(G_e,k^*)$ is a cyclic group of order $\lvert G_e \rvert$.
  \end{lemma}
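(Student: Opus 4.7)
The plan is to compute the Picard group of the twisted curve $\mathcal{X}$ by running the Leray spectral sequence for $f \colon \mathcal{X} \to C$ with coefficients in $\gmsheaf$. Since $\pic(\mathcal{X}) = \etalecohomology{1}{\mathcal{X}}{\gmsheaf}$ and $\pic(C) = \etalecohomology{1}{C}{\gmsheaf}$, the low-degree terms of the spectral sequence will read
\begin{align*}
0 \to \etalecohomology{1}{C}{f_*\gmsheaf} \to \pic(\mathcal{X}) \to \etalecohomology{0}{C}{R^1 f_*\gmsheaf} \to \etalecohomology{2}{C}{f_*\gmsheaf},
\end{align*}
and once the two pushforward sheaves and the vanishing on the right are established, the desired short exact sequence will fall out.

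First I would identify $f_*\gmsheaf[\mathcal{X}]$ with $\gmsheaf[C]$. Away from the nodes $f$ is an isomorphism, so there is nothing to check. At a node $e$, the étale local model is $[\spec(k[x,y]/xy)/G_e] \to \spec(k[u,v]/uv)$ with $u=x^l$, $v=y^l$; the global invertible functions on the source, being $G_e$-invariant units of $k[x,y]/xy$, are precisely the units on the target. Next I would compute $R^1 f_*\gmsheaf[\mathcal{X}]$. Away from the nodes this vanishes. At a node $e$, after passing to the strict Henselization, the local stack has the form $[\spec(A^{sh})/G_e]$ where $\spec A^{sh}$ has trivial Picard group (it is a strictly local nodal singularity, a union of two smooth branches glued at a closed point), so every line bundle is trivialised upstairs and its class is determined by the $G_e$-linearisation at the fixed locus. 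This identifies the stalk with $\pic(BG_e)$, making $R^1 f_*\gmsheaf$ a skyscraper sheaf on $C$ supported on $E$ with the stated stalks. Hence $\etalecohomology{0}{C}{R^1 f_*\gmsheaf} = \bigoplus_{e \in E} \pic(BG_e)$.

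To close the sequence on the right I would invoke Tsen's theorem: the Brauer group of a proper curve over an algebraically closed field is trivial, so $\etalecohomology{2}{C}{\gmsheaf} = 0$, and the Leray edge map lands in zero. This yields the claimed short exact sequence. For the final assertion, $\pic(BG_e) \cong \hom(G_e, \gmsheaf)$ is standard for a classifying stack over an algebraically closed field, and since $G_e$ is cyclic of order $|G_e|$ and $k^*$ contains all roots of unity (recall $|G_e|$ is coprime to the characteristic by tameness), this character group is itself cyclic of order $|G_e|$.

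The main obstacle will be the stalk computation of $R^1 f_*\gmsheaf$ at a node: one needs to justify cleanly that line bundles on $[\spec(A^{sh})/G_e]$ are determined entirely by their restriction to $BG_e \hookrightarrow \mathcal{X}$, i.e.\ that the local Picard group of the underlying strictly Henselian nodal singularity is trivial and that the only obstruction to descent is the $G_e$-linearisation. Once this local analysis is in hand, the rest of the argument is formal from the Leray spectral sequence and Tsen's theorem.
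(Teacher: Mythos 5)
Your proposal is correct in outline, and it takes a genuinely different route from the paper for the simple reason that the paper does not prove this lemma at all: it quotes it from the cited reference (Proposition 5.2 of Poma), so you are in effect supplying the standard argument that the citation encapsulates. Your Leray spectral sequence plan for $f:\mathcal{X}\to C$ is the natural one: $f_*\gmsheaf[\mathcal{X}]\cong\gmsheaf[C]$ because the coarse space map satisfies $f_*\structuresheaf_{\mathcal{X}}\cong\structuresheaf_{C}$ (so invariant units upstairs are units downstairs), $R^1f_*\gmsheaf$ is a skyscraper supported at the nodes, and the five-term exact sequence then yields the lemma once $\etalecohomology{2}{C}{\gmsheaf}=0$. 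Two points need a bit more care than you give them. First, $C$ is a nodal curve, so Tsen's theorem as usually stated (for smooth proper curves, via the $C_1$ property of the function field) does not apply verbatim; one gets $\etalecohomology{2}{C}{\gmsheaf}=0$ by a normalization d\'evissage, using $1\to\gmsheaf[C]\to h_*\gmsheaf[\widehat{C}]\to Q\to 1$ with $Q$ a skyscraper at the nodes, Tsen on the smooth curve $\widehat{C}$, vanishing of higher cohomology of skyscrapers, and the vanishing of $R^{>0}h_*$ for the finite map $h$ (this is also exactly the vanishing the paper itself invokes later for the nodal coarse space). Second, the local stalk computation you flag as the main obstacle does go through, and tameness is precisely where it enters: on the strictly henselian local model the Picard group of $\left[\spec(A^{sh})/G_e\right]$ is $\cohomology{1}{G_e}{(A^{sh})^*}$, the $1$-units of $A^{sh}$ are uniquely $\lvert G_e\rvert$-divisible by Hensel's lemma since $\lvert G_e\rvert$ is invertible in $k$, hence have vanishing higher $G_e$-cohomology, and therefore $\cohomology{1}{G_e}{(A^{sh})^*}\cong\hom(G_e,k^*)\cong\pic(BG_e)$, compatibly with restriction along $BG_e\hookrightarrow\mathcal{X}$, so the Leray edge map is the natural character map used throughout the paper. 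With these two points filled in, your argument is complete and self-contained, which is arguably more informative than the bare citation the paper gives.
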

\end{sloppypar}
\begin{proposition}\label{Prop_degree_is_integer}
Let $L$ be a line bundle on $\twistedcurve$. Then the degree of $L$ is an integer. 
\end{proposition}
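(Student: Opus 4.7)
The plan is to pull back to the normalization and then invoke Remark~\ref{remark_non_integral_degree}, which handles the fractional part of the degree on smooth orbifold curves. By definition, $\deg(L)=\sum_j \deg(\nu^*L|_{\widehat{\twistedcurve}_j})$, summed over connected components $\widehat{\twistedcurve}_j$ of $\widehat{\twistedcurve}$. Each $\widehat{\twistedcurve}_j$ is a smooth orbifold curve whose stacky points are exactly the preimages of nodes of $C$ lying on it. Choosing, for each stacky point $\hat{e}$ of $\widehat{\twistedcurve}$, an integer representative $a_{\hat{e}}$ of the character of $\nu^*L$ at $\hat{e}$ (an element of $\integers/l_{\hat{e}}\integers$, where $l_{\hat{e}}=|G_{\hat{e}}|$), Remark~\ref{remark_non_integral_degree} applied componentwise and summed yields
$$ \deg(L) \,\equiv\, \sum_{\hat{e}} \frac{a_{\hat{e}}}{l_{\hat{e}}} \pmod{\integers}, $$
where the sum ranges over all stacky points of $\widehat{\twistedcurve}$.

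The stacky points of $\widehat{\twistedcurve}$ come in pairs $\{\hat{e}_1,\hat{e}_2\}$, one above each node $e\in E$, with common stabilizer order $l_e$. It therefore suffices to show that for every $e$ the two characters at $\hat{e}_1$ and $\hat{e}_2$ are additive inverses in $\integers/l_e\integers$; for then the representatives $a$ and $l_e-a$ contribute $(a+(l_e-a))/l_e=1 \in \integers$ per pair, making the right-hand side of the displayed congruence an integer.

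To verify the opposite-characters claim, I would use the \'etale local model $[\spec(k[x,y]/xy)/G_e]$ of $\twistedcurve$ at the node $e$. An equivariant line bundle on this chart is, up to isomorphism, trivial with an equivariant structure given by some character $\sigma \mapsto \zeta^a$ of $G_e$. Pulling back to the two branches $V(y)$ and $V(x)$, the action of $\sigma$ on the respective local coordinate is $\zeta$ on the first branch but $\zeta^{-1}$ on the second, by the balancing condition of Definition~\ref{def_of_a_twisted_curve}. When the second branch is viewed as a smooth orbifold curve, the conventional generator of $G_{\hat{e}_2}$---the one multiplying the local coordinate $y$ by $\zeta$---is therefore $\sigma^{-1}$, which acts on the fibre of $L$ by $\zeta^{-a}$. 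Hence the character at $\hat{e}_2$ equals $-a$, as required.

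The main obstacle is precisely this last local computation: the integrality of $\deg(L)$ rests on the sign flip between the two branches at each node, which is exactly what the balancing condition encodes. Without balancing, the characters on the two branches would coincide, and pair contributions like $2a/l_e$ could fail to be integral. Once the conventions at the node are tracked correctly, the summation argument is immediate.
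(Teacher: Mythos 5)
Your proposal is correct and follows essentially the same route as the paper: reduce to the claim that the characters of $\nu^*(L)$ at the two preimages of each node are inverse to one another (a consequence of the balancing condition, which the paper phrases via the normalization $BG_e \coprod BG_e \to BG_e$ differing by the inverse automorphism of $G_e$), and then conclude integrality from Remark \ref{remark_non_integral_degree}. Your explicit verification in the \'etale local chart is just a more detailed spelling-out of that same step.
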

\begin{proof}
  \begin{sloppypar}
    This essentially follows from the balancing conditions at the nodes (See Proposition 2.19, \cite{chiodo_stable_twisted_and_r_spin}). For a node $e$ of $C$, we have a closed immersion $BG_e \to \twistedcurve$. The fiber ${BG_e \times_{\twistedcurve} \widehat{\twistedcurve}}$ of the normalization over $BG_e$ is isomorphic to $BG_e  \coprod BG_e$. The morphism ${BG_e  \coprod BG_e \xrightarrow{\nu} BG_e}$ differs on the two copies of $BG_e$ by the automorphism corresponding to the inverse automorphism of the stabilizer $G_e$. Consequently, the characters of $\nu^*(L)$ at the two pre-images of $e$ are inverses of each other. Hence the total degree of $L$ is an integer (See Remark \ref{remark_non_integral_degree}).
  \end{sloppypar}
\end{proof}
\begin{definition}\label{definition-dual-graph}
The dual graph associated to $\twistedcurve$ is a graph that has a vertex for every irreducible component of $\twistedcurve$. For every node in $\twistedcurve$, there is an (undirected) edge which joins the vertices corresponding to the branches that the node belongs to.
\end{definition}
\begin{sloppypar}
  We denote the dual graph of $\twistedcurve$ by $\Gamma$. We denote the vertex set of the dual graph by by $V$ and the edge set is naturally parameterized by $E$. 
\end{sloppypar}
\begin{definition}\label{definition-boundary-in-a-graph}
  \begin{sloppypar}
    By $C_0(\Gamma,\integers/2\integers)$ (resp. $C_1(\Gamma,\integers/2\integers)$), we denote the free $\autgroup$ vector space generated by the vertex (resp. edge) set of $\Gamma$. The boundary map ${C_1(\Gamma,\integers/2\integers) \xrightarrow{\partial} C_0(\Gamma,\integers/2\integers)}$ is defined by ${\partial (e = (v_1, v_2)) = v_2 - v_1}$ We denote the kernel of $\partial$ by $\homology{1}{\Gamma}{\autgroup}$.
  \end{sloppypar}
\end{definition}
\begin{proposition}\label{prop_exact_sequence_picard_two_torsion_of_twisted}
Assume that the stabilizers of the nodes of $\curvedown$ have even order. Then we have the following exact sequence (See Corollary 3.1, \cite{chiodo_stable_twisted_and_r_spin}). 
$$ 0 \to \pic(C)[2] \to \pic(\twistedcurve)[2] \to \text{H}_1(\Gamma, \integers / 2 \integers) \to 0 .$$
In particular, the order $\pic(\twistedcurve)[2]$ of is $2^{2g}$. 
\end{proposition}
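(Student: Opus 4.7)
The strategy is to apply the snake lemma to the exact sequence of Lemma \ref{lemma_pic_of_twisted_exact_sequence} with respect to multiplication by $2$. Since each $|G_e|$ is even, the cyclic group $\pic(BG_e)$ contains a unique subgroup of order two, canonically $\integers/2\integers$; hence $\bigoplus_{e \in E} \pic(BG_e)[2] \cong C_1(\Gamma, \integers/2\integers)$. Left exactness of the 2-torsion functor then yields
$$ 0 \to \pic(C)[2] \to \pic(\twistedcurve)[2] \to C_1(\Gamma, \integers/2\integers), $$
with the indicated kernel; the remaining task is to identify the image of the rightmost arrow with $\homology{1}{\Gamma}{\integers/2\integers}$.

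For the containment of the image in $\homology{1}{\Gamma}{\integers/2\integers}$, I would take $L \in \pic(\twistedcurve)[2]$ and pull back along the normalization $\nu \colon \widehat{\twistedcurve} \to \twistedcurve$. On each smooth orbifold component $\widehat{\twistedcurve}_v$ the restriction of $\nu^*L$ is 2-torsion, hence of degree zero. Remark \ref{remark_non_integral_degree} then forces the sum of its characters at the orbifold points of $\widehat{\twistedcurve}_v$, each divided by the stabilizer order, to be an integer. By the proof of Proposition \ref{Prop_degree_is_integer} the two branches of a node carry mutually inverse characters; for 2-torsion these are equal and lie in $\{0,\, |G_e|/2\}$. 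Writing the character at $e$ as $\bar\chi_e \cdot |G_e|/2$ with $\bar\chi_e \in \integers/2\integers$, a loop at $v$ contributes two equal branches giving an automatic integer, and the integrality condition at $v$ reduces to $\sum_{e \ni v,\ \text{non-loop}} \bar\chi_e \equiv 0 \pmod{2}$, that is, $\partial(\bar\chi)|_v = 0$.

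For surjectivity I would argue constructively. Given a cycle $\bar\chi \in \homology{1}{\Gamma}{\integers/2\integers}$, on each component $\widehat{\twistedcurve}_v$ apply the orbifold Picard sequence recalled in Section 2 together with its snake lemma for multiplication by $2$: a 2-torsion character tuple at the orbifold points lifts to a 2-torsion line bundle on $\widehat{\twistedcurve}_v$ if and only if the class of $\structuresheaf_{\widehat{C}_v}\bigl(\sum_{i : \chi_i \neq 0} p_i\bigr)$ in $\pic(\widehat{C}_v)/2\pic(\widehat{C}_v) \cong \integers/2\integers$ vanishes, equivalently if and only if the number of orbifold points of $\widehat{\twistedcurve}_v$ with non-trivial character is even. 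For our $\bar\chi$ this parity is even because loops contribute two branches and the non-loop contributions sum to $\partial\bar\chi|_v = 0$. The resulting $\hat L_v \in \pic(\widehat{\twistedcurve}_v)[2]$ can be glued across the nodes by any equivariant isomorphism of the matching characters at $BG_e$, yielding $L \in \pic(\twistedcurve)$ mapping to $\bar\chi$; the obstruction for $L^{\otimes 2}$ to be trivial on $\twistedcurve$ lies in $H^1(\Gamma, \gmsheaf) \cong (k^*)^{h_1(\Gamma)}$, which is 2-divisible, so the gluing data can be rescaled to make $L$ genuinely 2-torsion.

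The main obstacle is the surjectivity step. The crucial point tying it together is that both the containment of the image in $\homology{1}{\Gamma}{\integers/2\integers}$ and the per-component solvability of the 2-torsion lift are governed by the same vertex parity condition $\partial\bar\chi = 0$; the residual obstruction to $L^{\otimes 2}$ being trivial is soft because the relevant $k^*$-valued graph cohomology is divisible. The order statement $|\pic(\twistedcurve)[2]| = 2^{2g}$ then follows from the exact sequence, using the standard count $|\pic(C)[2]| = 2^{2g - h_1(\Gamma)}$ (via the extension $1 \to (k^*)^{h_1(\Gamma)} \to \pic^0(C) \to \prod_v \pic^0(\widehat{C}_v) \to 1$) and $|\homology{1}{\Gamma}{\integers/2\integers}| = 2^{h_1(\Gamma)}$.
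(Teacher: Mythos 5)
Your proof is correct, and while it starts from the same place as the paper --- applying the snake lemma for multiplication by $2$ to the sequence of Lemma \ref{lemma_pic_of_twisted_exact_sequence} and identifying $\bigoplus_{e}\pic(BG_e)[2]$ with $C_1(\Gamma,\integers/2\integers)$ --- it diverges at the key step. The paper identifies the connecting homomorphism $\partial'\colon \bigoplus_e \pic(BG_e)[2] \to \pic(C)/\pic(C)^2 \cong C_0(\Gamma,\integers/2\integers)$ with the graph boundary map $\partial$, after which exactness and the image $\ker\partial = \homology{1}{\Gamma}{\integers/2\integers}$ fall out formally; you never compute the connecting map and instead establish the two inclusions directly, proving image $\subseteq \homology{1}{\Gamma}{\integers/2\integers}$ via the degree--age integrality of Remark \ref{remark_non_integral_degree} on the normalization (correctly reading the characters as divided by the stabilizer orders, and handling loops by the balancing argument of Proposition \ref{Prop_degree_is_integer}), and surjectivity by an explicit component-wise construction: the per-component lifting criterion (parity of the number of stacky points with nontrivial character, via divisibility of $\pic^0$ of a smooth curve) plus a gluing step in which the obstruction to $L^{\otimes 2}$ being trivial is killed by rescaling descent data, using that $k^*$ admits square roots, exactly as in the proof of Proposition \ref{prop_two_torsion_trivial_on_normalization}. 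In effect your two inclusions re-prove the identification $\partial' = \partial$ that the paper asserts by a commutative diagram without verification, so your route is longer but more self-contained and explicit (it actually exhibits the line bundles realizing each cycle), whereas the paper's route is shorter and makes the exactness purely formal once the diagram is accepted; your counting of $\lvert\pic(C)[2]\rvert = 2^{2g-g'}$ via the extension of $\prod_v \pic^0$ by a torus likewise fills in a count the paper only states. The one place to be slightly more careful is the gluing sentence: you should say explicitly that the descent data at a node exists because the two branch characters are mutually inverse (hence equal, being $2$-torsion), and that rescaling the descent datum of $L$ at a node by $\lambda_e$ changes the gluing class of $L^{\otimes 2}$ by $\lambda_e^{2}$, which is what lets divisibility of $k^*$ finish the argument.
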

\begin{proof}
We have the exact sequence as stated in Lemma \ref{lemma_pic_of_twisted_exact_sequence} that describes the Picard group of $\curvedown$. By applying the squaring map from the sequence to itself we get the following exact sequence:
\[
  \begin{tikzcd}[column sep=tiny]
    0 \arrow{r} & {\pic(C)[2]} \arrow{r} & {\pic(\twistedcurve)[2]} \arrow{r} & \displaystyle\bigoplus_{e \in E} \pic(BG_e)[2] \arrow{r}{\partial'} & \frac{\pic(C)}{\pic(C)^2} \arrow{r} & \frac{\pic(\twistedcurve)}{\pic(\twistedcurve)^2} \arrow{r} & \displaystyle\bigoplus_{e \in E} \frac{\pic(BG_e)}{\pic(BG_e)^2} \arrow{r} & 0
  \end{tikzcd}
\]
Since for all $e$ in $E$, the group $G_e$ is a cyclic group of even order, we can identify $\bigoplus_{e \in E} \pic(BG_e)[2]$ with $C_1(\Gamma,\autgroup)$. Also, we have $\pic(C)/\pic(C)^2 \cong C_0(\Gamma,\autgroup)$. With these identifications, the connecting morphism $\partial'$ in the above long exact sequence fits in the following diagram:
\[
\begin{tikzcd}
  \displaystyle\bigoplus_{e \in E} \pic(BG_e)[2] \arrow{d}{\rotatebox{90}{$\simeq$}} \arrow{r}{\partial'} & \frac{\pic(C)}{\pic(C)^2} \arrow{d}{\rotatebox{90}{$\simeq$}} \\
  C_1(\Gamma,\autgroup) \arrow{r}{\partial} & C_0(\Gamma,\autgroup) 
\end{tikzcd}
\]
Finally, if $g'$ denotes the genus of the graph $\Gamma$, then we have $\lvert \pic(C)[2] \rvert = 2^{2g-g'}$ and $\lvert \homology{1}{\Gamma}{\autgroup} \rvert = 2^{g'}$. This completes the proof.
\end{proof}
\begin{definition}\label{definition_cycle_in_a_graph}
  \begin{sloppypar}
    By a cycle in a graph we mean a sequence $\{e_1, e_2, \ldots , e_l\}$ of distinct edges for which there exists a sequence of distinct vertices $\{v_1, v_2, \ldots, v_l \}$ such that for $1 \leq i \leq l-1$, the edge $e_i$ joins the vertices $(v_i,v_{i+1})$, and the last edge $e_l$ joins the vertices $(v_l,v_1)$. We say an edge $e$ is non-separating, if deleting $e$ does not increase the number of connected components of the graph.
  \end{sloppypar}
\end{definition}
\begin{remark}\label{remark_edge_disjoint_cycles}
  \begin{sloppypar}
It is straightforward to see that a chain ${e_1 + e_2 + \ldots +e_l}$ in $C_1(\Gamma,\integers/2\integers)$ is killed by the boundary map ${\partial: C_1(\Gamma,\integers/2\integers) \to C_0(\Gamma,\integers/2\integers)}$ if and only if we can write the set $\{e_1, e_2, \ldots, e_l\}$ as an edge-disjoint union of cycles. Let $\Gamma'$ denote a graph obtained by deleting some edges from $\Gamma$. We may conclude from the previous observation that the natural injective map ${\homology{1}{\Gamma'}{\autgroup} \to \homology{1}{\Gamma}{\autgroup}}$ is an isomorphism if and only if $\Gamma'$ contains all the non-separating edges of $\Gamma$. When not all stabilizers are of even order, we have the exact sequence:
$$  0 \to \pic(C)[2] \to \pic(\twistedcurve)[2] \to \text{H}_1(\Gamma', \integers / 2 \integers) \to 0 $$
where $\Gamma'$ is the graph obtained by deleting the edges of $\Gamma$ that correspond to the nodes with odd stabilizers. This can easily be seen from the proof of the above Proposition \ref{prop_exact_sequence_picard_two_torsion_of_twisted}. Thus we have ${\lvert 
    \pic(\twistedcurve)[2] \rvert = 2^{2g}}$ if and only if the stabilizers of the non-separating nodes have even order (See Theorem 3.9, \cite{chiodo_stable_twisted_and_r_spin}).
  \end{sloppypar}
\end{remark}
\begin{definition}\label{definition-coboundary-map}
  \begin{sloppypar}
    By $C^0(\Gamma,\autgroup)$ (resp. $C^1(\Gamma,\integers/2\integers)$), we denote the vector space of $\integers/2\integers$ valued functions on the vertex (resp. edge) set of $\Gamma$. The coboundary map ${C^0(\Gamma,\integers/2\integers) \xrightarrow{\delta} C^1(\Gamma,\integers/2\integers)}$ is defined by ${\delta f(e = (v_1, v_2)) = f(v_2) - f(v_1)}$. We denote the cokernel of this map by $\cohomology{1}{\Gamma}{\autgroup}$.
  \end{sloppypar}
\end{definition}
\begin{proposition}\label{prop_two_torsion_trivial_on_normalization}
  \begin{sloppypar}
    We set ${H = \{ L \in \pic(\twistedcurve)[2] \, \vert \, \nu^*(L) \text{ is trivial on all components of } \widehat{\twistedcurve}. \}}$. Then we have an isomorphism ${H \cong \cohomology{1}{\Gamma}{\autgroup}}$.
  \end{sloppypar}
\end{proposition}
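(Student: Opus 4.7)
The plan is to reduce the computation of $H$ to a classical calculation on the nodal coarse space $C$ and then exploit the Mayer--Vietoris sequence for its normalization $\widehat{\nu}\colon \widehat{C} \to C$.

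First I would compare the Picard-group exact sequences for $\twistedcurve$ and for its normalization $\widehat{\twistedcurve}$. Applying Lemma \ref{lemma_pic_of_twisted_exact_sequence} to $\twistedcurve$, and summing the analogous sequence from the Preliminaries over the smooth orbifold components of $\widehat{\twistedcurve}$, yields the commutative diagram with exact rows
\[
\begin{tikzcd}[column sep=tiny]
0 \arrow{r} & \pic(C) \arrow{r}{f^*} \arrow{d}{\widehat{\nu}^*} & \pic(\twistedcurve) \arrow{r} \arrow{d}{\nu^*} & \displaystyle\bigoplus_{e \in E}\pic(BG_e) \arrow{r} \arrow{d}{\rho} & 0 \\
0 \arrow{r} & \pic(\widehat{C}) \arrow{r}{\widehat{f}^*} & \pic(\widehat{\twistedcurve}) \arrow{r} & \displaystyle\bigoplus_{e\in E}\bigl(\pic(BG_e)\oplus\pic(BG_e)\bigr) \arrow{r} & 0
\end{tikzcd}
\]
The balancing analysis from the proof of Proposition \ref{Prop_degree_is_integer} shows that $\rho$ sends $\alpha \in \pic(BG_e)$ to $(\alpha, -\alpha)$ on the two sides of the node $e$; on $2$-torsion this becomes the diagonal $\alpha \mapsto (\alpha,\alpha)$, which is injective. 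A diagram chase then forces any $L \in H$ to satisfy $L = f^* M$ for some $M \in \pic(C)$; since $f^*$ is injective, $M \in \pic(C)[2]$, and using $\nu^*\circ f^* = \widehat{f}^*\circ\widehat{\nu}^*$ together with injectivity of $\widehat{f}^*$ the condition $\nu^* L = 0$ becomes $\widehat{\nu}^* M = 0$. Hence $H \cong \ker\bigl(\pic(C)[2] \xrightarrow{\widehat{\nu}^*} \pic(\widehat{C})[2]\bigr)$.

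Next I would compute this kernel using the Mayer--Vietoris sequence of \etale{} sheaves on $C$,
\[ 1 \to \gmsheaf[C] \to \widehat{\nu}_*\gmsheaf[\widehat{C}] \to \bigoplus_{e \in E} k^*_e \to 1, \]
whose cokernel is a sum of skyscrapers at the nodes with stalk $k^*$. Taking cohomology gives
\[ (k^*)^V \xrightarrow{d^0} (k^*)^E \to K \to 0,\qquad d^0(f)(e=(v_1,v_2))=f(v_1)/f(v_2), \]
where $K := \ker(\pic(C) \to \pic(\widehat{C}))$. Because $k$ is algebraically closed of characteristic different from $2$, $k^*$ is $2$-divisible, so a snake-lemma argument applied to $0 \to \text{image}(d^0) \to (k^*)^E \to K \to 0$ (and to $0 \to \ker(d^0) \to (k^*)^V \to \text{image}(d^0) \to 0$) yields $K[2] \cong (\mu_2)^E / d^0\bigl((\mu_2)^V\bigr)$. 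Identifying $\mu_2$ with $\autgroup$ (so that inversion becomes trivial), the restriction $d^0|_{(\mu_2)^V}$ is exactly the coboundary $\delta$ of Definition \ref{definition-coboundary-map}, so $K[2] = C^1(\Gamma,\autgroup)/\delta C^0(\Gamma,\autgroup) = \cohomology{1}{\Gamma}{\autgroup}$.

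The main obstacle I anticipate is the first step: one must carefully track the effect of the balancing condition on the vertical map $\rho$ — the two branch restrictions at each node are inverse characters — and verify that this inversion disappears on $2$-torsion, yielding the injectivity that powers the reduction to $\pic(C)[2]$. Everything that follows is a standard combination of Mayer--Vietoris with the $2$-divisibility of $k^*$.
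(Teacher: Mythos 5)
Your proof is correct and follows essentially the same route as the paper: first reduce to $H \cong \ker\bigl(\pic(C)\to\pic(\widehat{C})\bigr)[2]$ by observing that triviality of $\nu^*L$ forces the characters of $L$ at the nodes to vanish (your diagram with the balancing map $\rho$ just makes the paper's one-line observation explicit), and then compute this kernel from the units/Mayer--Vietoris sequence $1\to\gmsheaf[C]\to\widehat{\nu}_*\gmsheaf[\widehat{C}]\to\bigoplus_{e}k^*\to1$ using $2$-divisibility of $k^*$ and the identification of the first map with the graph coboundary. Your snake-lemma extraction of the $2$-torsion is the same divisibility argument the paper phrases as $\ext^1(\integers/2\integers,-)=0$, so there is no substantive difference.
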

\begin{proof}
  \begin{sloppypar}
    We note that if ${L \in H}$ then the characters of $\nu^*(L)$ at all the stacky points of $\widehat{\twistedcurve}$ are trivial. Therefore, $L$ is isomorphic to $f^*(L')$ for some line bundle $L' \in \pic(C)[2]$ such that $L'$ pulls back to a trivial line bundle on each component of the normalization $\widehat{C}$. We have the following exact sequence:
\[
  \begin{tikzcd}
    0 \arrow{r} & {\left. (  \displaystyle\bigoplus_{i \in V} k^* ) \middle/ k^* \right.} \arrow{r} &  \displaystyle\bigoplus_{e \in E} k^* \arrow{r} & \text{ ker } \left(\pic(C) \to \pic(\widehat{C}) \right) \arrow{r} & 0 .
  \end{tikzcd}
  \]
Since $k^*$ is closed under taking square roots, we have ${\ext^{1}(\autgroup,k^*) \cong 0}$ and ${\ext^{1}\left(\autgroup,\left. (\bigoplus_{i \in V} k^*) \middle/ k^* \right. \right) \cong 0}$. After applying the functor $\hom(\autgroup,-)$ to the above sequence, we get the following exact sequence:
    $$ \bigoplus_{i \in V} \autgroup \to \bigoplus_{e \in E} \autgroup \to \text{ ker } \left(\pic(C) \to \pic(\widehat{C}) \right)[2] \to 0 .$$
    We see that the first map in the above sequence is precisely the coboundary map as in Definition \ref{definition-coboundary-map}. We have an isomorphism ${\ker \left(\pic(C) \to \pic(\widehat{C}) \right)[2] \xrightarrow{f^*} H}$ and thus we get an isomorphism $H \cong \cohomology{1}{\Gamma
}{\autgroup}$ as claimed.
  \end{sloppypar}
\end{proof}

\begin{remark}\label{remark_graph_pairing}
  \begin{sloppypar}
    We have the evaluation pairing ${C^1(\Gamma,\autgroup) \times \ C_1(\Gamma,\autgroup) \xrightarrow{\graphpairing{}{-}{-}} \autgroup}$. It is easy to see that if $\gamma$ is in the image of the coboundary map $\delta$, and $\alpha$ is in the kernel of the boundary map $\partial$, then the pairing $\graphpairing{}{\gamma}{\alpha}$ is zero. Consequently, we get a pairing ${\cohomology{1}{\Gamma}{\autgroup} \times \homology{1}{\Gamma}{\autgroup} \to \autgroup}$ which is perfect (See Lemma 2.1, \cite{baker_metric_properties_of_abel_jacobi_map}).
    We denote this pairing by $\graphpairing{\Gamma}{-}{-}$.
  \end{sloppypar}
\end{remark}
} 

\section{Azumaya algebras and Weil pairing}
{
\newcommand{\brauer}{\text{Br}}
\newcommand{\azumaya}[2]{\mathcal{A}_{\{#1,#2\}}}
\newcommand{\brauermap}{\text{br}}
\newcommand{\twistedcurve}{\mathcal{X}}
\newcommand{\weilpairing}[3]{\langle #2,#3 \rangle_{#1}}
\newcommand{\curveup}{\widetilde{\mathcal{X}}}
\newcommand{\autgroup}{\integers/2\integers}
Let $X$ be a quasi-compact, separated Deligne-Mumford stack. A matrix algebra on $X$ is an $\structuresheaf_{X}\mbox{-algebra}$ which is isomorphic to the endomorphism algebra of a finite, locally free $\structuresheaf_{X}\mbox{-module}$. An Azumaya algebra on $X$ is an $\structuresheaf_X$-algebra which is \etale{} locally isomorphic to a matrix algebra (See \cite{grothendiec_brauer_I} or \cite{antieau2020brauer}). An Azumaya algebra is said to be trivial if it is isomorphic to a matrix algebra.
\begin{remark}\label{remark-end-M-iso-to-end-N}
  \begin{sloppypar}
    Suppose $M$ and $N$ are two finite, locally free $\structuresheaf_{X}\mbox{-modules}$ on $X$ such that the endomorphisms algebras $\sheafhom_{\structuresheaf_{X}}(M,M)$ and $\sheafhom_{\structuresheaf_{X}}(N,N)$ are isomorphic. Then there exists a line bundle $L$ on $X$ such that ${M \otimes_{\structuresheaf_{X}} L \cong N}$ (See \cite[\href{https://stacks.math.columbia.edu/tag/0A2K}{Tag 0A2K}]{stacks-project}).
  \end{sloppypar}
\end{remark}
\begin{sloppypar}
\noindent  The multiplication operation on Azumaya algebras is given by the tensor product over $\structuresheaf_{X}$. Two Azumaya algebras $\mathcal{A}$ and $\mathcal{B}$ are said to be Brauer equivalent, if we have an isomorphism ${\mathcal{A} \otimes_{\structuresheaf_X} T_1 \cong \mathcal{B} \otimes_{\structuresheaf_X} T_2}$ for some trivial Azumaya algebras $T_1$ and $T_2$. The Brauer group of $X$, which we denote $\brauer(X)$, is the group of equivalence classes of Azumaya algebras. Let $\mathcal{A}$ be an Azumaya algebra which is locally free of rank $n^2$. We can regard $\mathcal{A}$ as a $PGL_n$-torsor over $X$ and therefore such Azumaya algebras are classified by the group $\etalecohomology{1}{X}{PGL_n}$ (See \cite{grothendiec_brauer_I}). From the exact sequence of sheaves:
 $$ 1 \to \gmsheaf \to GL_n \to PGL_n \to 1 $$
we obtain a group homomorphism ${\brauermap_{X} : \text{Br}(X) \to \etalecohomology{2}{X}{\gmsheaf}}$ which is called the Brauer class map. The Brauer class of $\mathcal{A}$ in $\brauer(X)$ is killed by $n$ (See \cite[\href{https://stacks.math.columbia.edu/tag/0A2L}{Tag 0A2L}]{stacks-project}). The Brauer class map is injective and its image is contained in the torsion part of $\etalecohomology{2}{X}{\gmsheaf}$ (See \cite{grothendiec_brauer_I}).

  Assume that two is invertible on $X$ and let $L_1, L_2 \in \pic(X)[2]$ be two $2\mbox{-torsion}$ line bundles on $X$. Then we define the $\structuresheaf_X$-algebra $\azumaya{L_1}{L_2}$ as follows. We set
  $$ \azumaya{L_1}{L_2} = \structuresheaf_X \oplus L_1 \oplus L_2 \oplus (L_1 \otimes L_2) .$$
The multiplication on $\azumaya{L_1}{L_2}$ is defined by fixing isomorphisms $L_1^2 \cong \structuresheaf_X$, $L_2^2 \cong \structuresheaf_X$, and if $e_1, e_2$ are local sections $L_1$ and $L_2$ respectively, then we define $e_1 \times e_2 = - e_2 \times e_1$ as a section of $L_1 \otimes L_2$. It is easy to see that this defines a matrix algebra whenever $L_1$ and $L_2$ are trivial. Therefore $\azumaya{L_1}{L_2}$ is an Azumaya algebra which corresponds to a two torsion element in the Brauer group.
  \begin{remark}\label{remark-azumaya-and-pullback}
    \begin{sloppypar}
      Let $f: Y \to X$ is a morphism of quasi-compact Deligne-Mumford stacks. Let $L_1,L_2 \in \pic(X)[2]$ be $2\mbox{-torsion}$ line bundles on $X$. Then we have ${f^* \azumaya{L_1}{L_2} \cong \azumaya{f^*L_1}{f^*L_2}}$. Moreover, if ${\azumaya{L_1}{L_2} \cong \sheafhom_{\structuresheaf_{X}}(M,M)}$ for some locally free $\structuresheaf_{X}\mbox{-module}$ $M$ then we have ${\azumaya{f^*L_1}{f^*L_2} \cong \sheafhom_{\structuresheaf_{Y}}(f^*M,f^*M)}$. 
    \end{sloppypar}
  \end{remark}

Let $\twistedcurve$ be a twisted curve over $k$ on which $2$ is invertible. We use Azumaya algebras to define the Weil pairing on $\twistedcurve$. The key ingredient is the following:
\begin{lemma}\label{lemma-h2_gm_is_zero}
The cohomology group $\etalecohomology{2}{\twistedcurve}{\gmsheaf}$ is zero (See Proposition 5.2, \cite{poma2013etale}).
\end{lemma}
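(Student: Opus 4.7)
The plan is to use the Leray spectral sequence for the coarse space morphism $f: \twistedcurve \to C$, reducing the claim to (i) a local computation of the higher direct images at each node and (ii) the vanishing of $\etalecohomology{2}{C}{\gmsheaf}$ on the underlying nodal coarse space.

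First I would analyze the sheaves $R^q f_* \gmsheaf_\twistedcurve$ for $q \geq 1$. Since $f$ is an isomorphism away from the nodes, these sheaves are supported on the finite set of nodes. At a node $e$, passing to the strict henselization, the stalk is computed by the \etale{} cohomology of the local stacky model $[\spec(R)/G_e]$ with $R = k[[x,y]]/xy$, which in turn reduces to the group cohomology $H^q(G_e, R^*)$. Decomposing $R^* = k^* \times (1 + \mathfrak{m}_R)$, and further splitting $(1 + \mathfrak{m}_R)$ into $G_e$-isotypic components via the logarithm (available because $|G_e|$ is invertible in $k$), one checks that the cohomology vanishes in even positive degrees: the divisibility of $k^*$ gives $H^{2n}(G_e, k^*) = 0$ for $n \geq 1$, and the non-trivial isotypic pieces of $(1 + \mathfrak{m}_R)$ are cohomologically trivial since the norm map vanishes on each of them. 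Consequently $R^2 f_* \gmsheaf_\twistedcurve = 0$, while $R^1 f_* \gmsheaf_\twistedcurve$ is a skyscraper sheaf supported at the nodes.

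Next, the Leray spectral sequence $E_2^{p,q} = \etalecohomology{p}{C}{R^q f_* \gmsheaf_\twistedcurve} \Rightarrow \etalecohomology{p+q}{\twistedcurve}{\gmsheaf}$ simplifies dramatically: the $q \geq 2$ row vanishes by the previous step, the $q = 1$ row contributes only at $p = 0$ (higher cohomology of a skyscraper on $C$ is zero), and the $q = 0$ row is $\etalecohomology{p}{C}{\gmsheaf}$. The vanishing of $\etalecohomology{2}{\twistedcurve}{\gmsheaf}$ therefore reduces to showing $\etalecohomology{2}{C}{\gmsheaf_C} = 0$.

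For this final reduction I would invoke the normalization short exact sequence $1 \to \gmsheaf_C \to \nu_* \gmsheaf_{\widehat{C}} \to \mathcal{Q} \to 1$ on the \etale{} site of $C$, where $\mathcal{Q}$ is a skyscraper sheaf at the nodes encoding the identification of branches. Since $\nu$ is finite, $\etalecohomology{p}{C}{\nu_* \gmsheaf_{\widehat{C}}} = \etalecohomology{p}{\widehat{C}}{\gmsheaf}$, which vanishes in degree $2$ by Tsen's theorem on each smooth proper connected component of $\widehat{C}$. Combined with $\etalecohomology{1}{C}{\mathcal{Q}} = 0$, the long exact cohomology sequence yields the desired $\etalecohomology{2}{C}{\gmsheaf_C} = 0$. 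I expect the main obstacle to be the local computation at the nodes: one must carefully justify that the units of the strictly henselian ring $k[[x,y]]/xy$ form a cohomologically well-behaved $G_e$-module, which genuinely uses both the algebraic closure of $k$ and the invertibility of $|G_e|$ in $k$.
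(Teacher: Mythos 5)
The paper offers no proof of this lemma at all; it is imported wholesale from Poma (Proposition 5.2 of the cited reference), so your proposal has to be judged on its own rather than against an internal argument. Your self-contained route is sound in outline and is the computation one would expect: the Leray spectral sequence for the coarse map $f\colon\mathcal{X}\to C$ (with $f_*\mathbf{G}_m=\mathbf{G}_{m,C}$, which uses that coarse-space formation for a tame stack commutes with étale base change), the observation that $R^qf_*\mathbf{G}_m$ for $q\geq 1$ is concentrated at the nodes with stalks given by the group cohomology $H^q(G_e,A^*)$ of the strictly henselian local ring $A$ upstairs, the vanishing of higher cohomology of skyscraper sheaves over an algebraically closed field, and $\etalecohomology{2}{C}{\mathbf{G}_m}=0$ via the normalization sequence together with Tsen's theorem on the components of $\widehat{C}$. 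Only the entries $E_2^{2,0}$, $E_2^{1,1}$, $E_2^{0,2}$ are relevant, and your steps dispose of exactly these.

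The one step that fails as written is the local vanishing of $H^2(G_e,1+\mathfrak{m}_A)$. Invertibility of $\lvert G_e\rvert$ in $k$ does not provide a logarithm: in residue characteristic $p>0$ (which is allowed here, only $p\neq 2$ being excluded) the series $\log(1+x)$ has $p$ in its denominators, there is no isomorphism between $1+\mathfrak{m}_A$ and $\mathfrak{m}_A$, and $1+\mathfrak{m}_A$ is not a $k$-vector space, so the isotypic decomposition and the norm argument you invoke are not available. The conclusion is nonetheless correct and has a simpler proof: since $A$ is henselian and $l=\lvert G_e\rvert$ is invertible in $k$, Hensel's lemma shows that $1+\mathfrak{m}_A$ is uniquely $l$-divisible, hence a $\mathbb{Z}[1/l]$-module; then $H^q(G_e,1+\mathfrak{m}_A)$ for $q>0$ is both annihilated by $l$ and a $\mathbb{Z}[1/l]$-module, so it vanishes. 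Combined with $H^2(G_e,k^*)=k^*/(k^*)^l=0$ (divisibility of $k^*$, as you say), this gives $R^2f_*\mathbf{G}_m=0$ with no finer analysis needed. Two cosmetic corrections: the stalk of $R^qf_*\mathbf{G}_m$ is computed over the strict henselization, not the completion $k[[x,y]]/(xy)$ (harmless, since only henselianness is used above), and the ring in the local quotient presentation should be the one upstairs, carrying the $G_e$-action $x\mapsto\zeta_l x$, $y\mapsto\zeta_l^{-1}y$, rather than the coarse local ring.
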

It follows from the above Lemma \ref{lemma-h2_gm_is_zero} that any Azumaya algebra on $\twistedcurve$ is trivial. Let $L_1,L_2 \in \pic(\twistedcurve)[2]$ be two $2\mbox{-torsion}$ line bundles on $\twistedcurve$. Then we have ${\azumaya{L_1}{L_2} \cong \sheafhom_{\structuresheaf_{\twistedcurve}}(M,M)}$ for some locally free $\structuresheaf_{\twistedcurve}\mbox{-module}$ of rank two. We define the pairing $\weilpairing{\twistedcurve}{L_1}{L_2}$ to be the degree of the line bundle $\wedge^2 M$ modulo two. We note that $M$ is unique up to a twist (See Remark \ref{remark-end-M-iso-to-end-N}) and therefore the partify of the degree of $\wedge^2 M$ is well-defined.
  \begin{definition}\label{definition-weil-pairing}
    The Weil pairing on $\twistedcurve$ is the map ${\pic(\twistedcurve)[2] \times \pic(\twistedcurve)[2] \to \autgroup}$ as defined above. We denote this pairing by $\weilpairing{\twistedcurve}{-}{-}$. 
  \end{definition}
  \begin{remark}\label{remark-smooth-curve-pairing}
    When $\twistedcurve$ is a smooth, proper curve over $k$, we know that the Weil pairing as defined above coincides with the usual Weil pairing (See Lemma 2, \cite{mumford_theta_characteristics}). Therefore we know in this case that the Weil pairing is bilinear, alternating and non-degenerate.
\end{remark}
\end{sloppypar}
} 

\section{Weil pairing on twisted curves} 
{
\newcommand{\brauer}{\text{Br}}
\newcommand{\azumaya}[2]{\mathcal{A}_{\{#1,#2\}}}
\newcommand{\brauermap}{\text{br}}
\newcommand{\twistedcurve}{\mathcal{X}}
\newcommand{\weilpairing}[3]{\langle #2,#3 \rangle_{#1}}
\newcommand{\curveup}{\widetilde{\mathcal{X}}}
\newcommand{\autgroup}{\integers/2\integers}
\newcommand{\character}[2]{\theta^{#1}_{#2}}
\newcommand{\graphpairing}[3]{\langle #2,#3 \rangle_{#1}}
\newcommand{\graphup}{\widetilde{\Gamma}}

Let $\twistedcurve$ be a twisted curve over $k$ of genus $g$ on which two in invertible. A two torsion line bundle gives an \etale{} double cover of $\twistedcurve$ which is also a twisted curve. We can use this double cover to compute the Weil pairing. For a smooth, proper curve, this idea is due to Mumford (See Lemma 2, \cite{mumford_theta_characteristics}). 
\begin{situation}\label{situation_double_cover}
Let $L_1,L_2 \in \pic(\twistedcurve)[2]$ be two $2\mbox{-torsion}$ line bundles. Let $\curveup \xrightarrow{\pi} \twistedcurve$ be the \etale{} double cover of $\twistedcurve$ given by $L_1$, and let $\tau$ denote the involution of $\curveup$ over $\twistedcurve$. Let $G$ denote the Galois group $\{id_{\curveup}, \tau \}$ of $\curveup$ over $\twistedcurve$.
\end{situation}
\begin{proposition}\label{prop-calculating-weil-pairing}
 In the above Situation \ref{situation_double_cover} we have the following:
\begin{enumerate}
\item There exists a line bundle $P$ on $\curveup$ such that 
$$ P \cong \tau^*P \otimes_{\structuresheaf_{\curveup}} \pi^*(L_2) .$$
\item For any line bundle $P$ on $\curveup$ such that the above holds, we have
$$ \weilpairing{\twistedcurve}{L_1}{L_2} \equiv \text{ degree}(P) \text{ modulo 2}.$$
\end{enumerate}
\end{proposition}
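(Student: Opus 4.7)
The plan is to build $P$ from the Azumaya algebra $\azumaya{L_1}{L_2}$ together with a natural idempotent that appears after pullback to $\curveup$. By Lemma \ref{lemma-h2_gm_is_zero}, $\azumaya{L_1}{L_2}$ is Brauer-trivial, so $\azumaya{L_1}{L_2} \cong \sheafend(M')$ for some locally free $\structuresheaf_{\twistedcurve}$-module $M'$ of rank two, and by Definition \ref{definition-weil-pairing}, $\weilpairing{\twistedcurve}{L_1}{L_2}$ is the parity of $\deg(\det M')$. Set $M := \pi^* M'$ on $\curveup$; by Remark \ref{remark-azumaya-and-pullback}, $\sheafend(M) \cong \azumaya{\pi^* L_1}{\pi^* L_2}$.

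The crucial point is that the double cover $\pi$ carries a canonical trivializing section $e_1$ of $\pi^* L_1$ satisfying $\tau^* e_1 = -e_1$, because $L_1 \subset \pi_* \structuresheaf_{\curveup}$ is the $(-1)$-eigenspace of $\tau$. Since $\mathrm{char}(k) \neq 2$, I form the idempotents $p = (1+e_1)/2$ and $q = 1-p$ in $\sheafend(M)$, which satisfy $\tau^* p = q$. Define $P := p \cdot M$, the $(+1)$-eigenspace of $e_1$ acting on $M$; this is locally free of rank one, hence a line bundle on $\curveup$. Since $\tau^* M \cong M$ canonically (from $\pi \circ \tau = \pi$), the image of $q = \tau^* p$ acting on $\tau^* M = M$ is $q \cdot M$, so $\tau^* P \cong q \cdot M$ and $M = P \oplus \tau^* P$.

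To establish part (1), I use the anti-commutation $e_1 e_2 = -e_2 e_1$. The $\pi^* L_2$-summand of the $\sheafend(M)$-action on $M$ gives an $\structuresheaf_{\curveup}$-linear morphism $\phi \colon M \to \pi^* L_2 \otimes M$ (using that $\pi^* L_2$ is self-dual). Locally $\phi$ is multiplication by $e_2$, and the anti-commutation relation forces $\phi$ to exchange the $\pm 1$-eigenspaces of $e_1$. Restricting therefore produces an isomorphism $P \xrightarrow{\sim} \pi^* L_2 \otimes \tau^* P$ --- invertibility follows from $e_2^2 = 1$ --- which rearranges to the desired $P \cong \tau^* P \otimes \pi^* L_2$.

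For part (2), taking determinants of $M = P \oplus \tau^* P$ yields $\pi^* \det M' = \det M = P \otimes \tau^* P$, and taking degrees gives $2 \deg(\det M') = 2 \deg P$, hence $\deg P \equiv \weilpairing{\twistedcurve}{L_1}{L_2} \pmod{2}$ for this specific $P$. For any other $P$ satisfying the relation in (1), the ratio $N := P \otimes (pM)^{-1}$ is $\tau$-invariant; since $H^2(\integers/2\integers, k^*) = 0$, the Hochschild--Serre spectral sequence for $\pi$ implies $N \cong \pi^* N'$ for some $N'$ on $\twistedcurve$, so $\deg N = 2 \deg N'$ is even and the parity of $\deg P$ is independent of the choice. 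The main obstacle I expect is the careful bookkeeping in part (1): ensuring that the local picture of $\phi$ swapping eigenspaces assembles into a genuinely well-defined, canonical isomorphism of line bundles, with all twists by $\pi^* L_2$ accounted for correctly.
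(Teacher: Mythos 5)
Your proposal is correct, but it reaches the result by a genuinely different route from the paper. The paper proves part (1) abstractly: it runs the Hochschild--Serre spectral sequence for the $G$-cover $\pi$ and uses $\etalecohomology{2}{\mathcal{X}}{\mathbf{G}_m}=0$ (Lemma \ref{lemma-h2_gm_is_zero}) to kill the class of the $1$-cocycle $\tau\mapsto\pi^*L_2$ in $\cohomology{1}{G}{\pic(\widetilde{\mathcal{X}})}$, which is exactly the existence of $P$; for part (2) it takes an \emph{arbitrary} such $P$, forms the rank-two pushforward $V=\pi_*P$, shows directly that $\mathcal{A}_{\{L_1,L_2\}}\cong\sheafend(V)$, and then $\pi^*(\wedge^2V)\cong P\otimes\tau^*P$ gives the degree identity with no further argument. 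You instead use the same vanishing downstairs to trivialize $\mathcal{A}_{\{L_1,L_2\}}\cong\sheafend(M')$ on $\mathcal{X}$, and build a specific $P$ by splitting $\pi^*M'$ along the canonical anti-invariant trivializing section $e_1$ of $\pi^*L_1$, with the anticommutation $e_1e_2=-e_2e_1$ giving the twist relation; this is essentially the paper's construction run in the opposite direction ($P\mapsto\pi_*P$ versus $\pi^*M'\mapsto pM$), and it buys an explicit, hands-on $P$ at the cost of an extra step: your degree computation only applies to that particular $P$, so you must separately prove independence of the choice, which you do by descending the $\tau$-invariant ratio $N$. That descent is precisely the content of the paper's Lemma \ref{remark_no_obstruction_for_torsors} (stated right after the proposition), and your appeal to $\cohomology{2}{\integers/2\integers}{k^*}=0$ literally covers only the case where $\widetilde{\mathcal{X}}$ is connected; when $L_1$ is trivial the unit group is $k^*\oplus k^*$ with the swap action, whose second group cohomology also vanishes (induced module), so this is a cosmetic rather than genuine gap. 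Two bookkeeping points you should make explicit: normalize $e_1$ so that $e_1^2=1$ under the same isomorphism $L_1^{\otimes 2}\cong\structuresheaf_{\mathcal{X}}$ used both to define the double cover and the algebra multiplication (otherwise rescale by a square root of the resulting constant), and observe that $e_1$ cannot act as a scalar at any point because it anticommutes with the invertible local sections of the $\pi^*L_2$-summand, which is what guarantees that $pM$ has constant rank one.
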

\begin{proof}
  \begin{sloppypar}
    We give a cohomological reasoning for the existence of $P$ while the rest of the proof is identical to Mumford's argument in \cite{mumford_theta_characteristics}.
We have the Hochschild-Serre spectral sequence for the Galois cover ${\curveup \xrightarrow{\pi} \twistedcurve}$ that converges to $\etalecohomology{p+q}{\twistedcurve}{\gmsheaf}$. The second page of the spectral sequence is as follows:
    \[
      \begin{tikzcd}[cramped,sep=small]
        \cohomology{2}{\curveup}{\gmsheaf[\curveup]}^{G}  & \phantom{} & \phantom{} & \phantom{} \\
        {\pic(\curveup)}^{G} \arrow[drr,close,"d^2_{0,1}"] & \cohomology{1}{G}{\pic(\curveup)} \arrow[drr,close,"d^2_{1,1}"] & \cohomology{2}{G}{\pic(\curveup)} & \phantom{} \\
        \gmsheaf(\curveup)^{G} & \cohomology{1}{G}{\gmsheaf(\curveup)} &  \cohomology{2}{G}{\gmsheaf(\curveup)} & \cohomology{3}{G}{\gmsheaf(\curveup)} \\
      \end{tikzcd}
    \]
    Since $\etalecohomology{2}{\twistedcurve}{\gmsheaf}$ is zero, we conclude that the map $d^2_{1,1}$ is injective. The assignment $\tau \to L_2$ defines a group homomorphism $G \xrightarrow{h} \pic(\twistedcurve)$. The composite group homomorphism $G \xrightarrow{\pi^* \circ h} \pic(\curveup)$ gives a $1\mbox{-cocycle}$ with values in $\pic(\curveup)$ and represents a group cohomology class in $\cohomology{1}{G}{\pic(\curveup)}$. We know that this group cohomology class lies in the kernel of the map $d^2_{1,1}$ (See Proposition 6.17, \cite{Torsors_and_gerbes}). Therefore, we conclude that the group cohomology class given by $\pi^* \circ h$ is zero. In other words, there exists a line bundle $P$ on $\curveup$ such that ${P \cong \tau^*P \otimes_{\structuresheaf_{\curveup}}\pi^*L_2}$ (See Proposition 7.1, \cite{Torsors_and_gerbes}). This proves the first part of the proposition.

    Let $P$ be any line bundle on $\curveup$ that satisfies the first part of the proposition. Let $V$ denote the $\structure_{\twistedcurve}\mbox{-module}$ $\pi_*P$. Since $\pi$ is a finite, \etale{} morphism of degree two, we conclude that $V$ is a locally $\structure_{\twistedcurve}\mbox{-module}$ of rank two. We claim that the Azumaya algebra $\azumaya{L_1}{L_2}$ is isomorphic to the matrix algebra $\sheafhom_{\structuresheaf_{\twistedcurve}}(V,V)$. Since $P$ is an $\structuresheaf_{\curveup}\mbox{-module}$, we have a map of sheaves ${\structuresheaf_{\curveup} \times P \to P}$ which gives a map         ${\pi_*(\structuresheaf_{\curveup}) \times \pi_*(P)} \to \pi_*(P)$. Since $\pi_*(\structuresheaf_{\curveup}) \cong \structuresheaf_{\twistedcurve} \oplus L_1$, we thus have an action of ${\structuresheaf_{\twistedcurve} \oplus L_1}$ on $V$. We choose an isomorphism ${\alpha: \pi^*L_2 \otimes_{\structuresheaf_{\curveup}} P \to \tau^*P}$ such that the composition:
    $$ P \cong \pi^*L_2 \otimes \pi^*L_2 \otimes P \xrightarrow{id \otimes \alpha} \pi^*L_2 \otimes \tau^*P \xrightarrow{\tau^*\alpha} \tau^* (\tau^*P) \cong P $$
    is identity. Then we have the map:
    $$ \pi^*L_2 \otimes_{\structuresheaf_{\curveup}} \left(P \oplus \tau^*P \right) \cong \left( \pi^*L_2 \otimes P \right) \oplus \left(\pi^*L_2 \otimes \tau^*P \right) \xrightarrow{\alpha \oplus \tau^* \alpha} \tau^*P \oplus P \cong P \oplus \tau^* P.$$ 
    We have ${\pi^*V \cong P \oplus \tau^*P}$ and the adjoint of the above map gives a morphism ${L_2 \otimes_{\structuresheaf_{\twistedcurve}} V \to V}$ that defines an action of $L_2$ on $V$. Altogether, this makes $V$ a module over the $\structuresheaf_{\twistedcurve}\mbox{-algebra}$ $\azumaya{L_1}{L_2}$ and gives an isomorphism ${\azumaya{L_1}{L_2} \cong \sheafhom_{\structuresheaf_{\twistedcurve}}(V,V)}$ (See Lemma 2, \cite{mumford_theta_characteristics}). Finally, we observe that 
    $$ \pi^*(\wedge^2 V) \cong \wedge^2 \pi^*(V) \cong P \otimes \tau^*P $$
and therefore we have 
    $$ 2 \text{ deg} (\wedge^2V) = \text{ deg}(P) + \text{ deg}(\tau^*P) = 2 \text{ deg}(P).$$
We conclude that the degree of $\wedge^2V$ equals the degree of $P$. This proves the second part of the proposition.
  \end{sloppypar}
\end{proof}
\begin{corollary}\label{corollary-Weil-pairing-is-bilinear-map}
The Weil pairing on $\twistedcurve$ is bilinear and alternating.
\end{corollary}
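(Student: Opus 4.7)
The plan is to combine the computational formula from Proposition \ref{prop-calculating-weil-pairing} with an elementary symmetry of the Azumaya algebra $\azumaya{L_1}{L_2}$ itself. More precisely, I would establish (i) the alternating identity $\weilpairing{\twistedcurve}{L}{L} = 0$, (ii) linearity in the second factor, and (iii) symmetry; then linearity in the first factor follows from (iii) combined with (ii).

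For the alternating property, set $L_1 = L_2 = L$ and form the \etale{} double cover $\pi : \curveup \to \twistedcurve$ determined by $L$. Since $\curveup$ is the $\mu_2$-torsor associated to $L$, the pullback $\pi^*L$ is canonically trivial; hence the structure sheaf $P = \structuresheaf_{\curveup}$ satisfies $P \cong \tau^*P \cong \tau^*P \otimes \pi^*L$ and has degree zero. By Proposition \ref{prop-calculating-weil-pairing}(2), this forces $\weilpairing{\twistedcurve}{L}{L} \equiv 0 \pmod 2$.

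For linearity in the second factor, fix $L_1$ together with its double cover $\pi : \curveup \to \twistedcurve$ and involution $\tau$. Given $L_2, L_2' \in \pic(\twistedcurve)[2]$, Proposition \ref{prop-calculating-weil-pairing}(1) produces line bundles $P, P'$ on $\curveup$ with $P \cong \tau^*P \otimes \pi^*L_2$ and $P' \cong \tau^*P' \otimes \pi^*L_2'$. Tensoring, we get $P \otimes P' \cong \tau^*(P \otimes P') \otimes \pi^*(L_2 \otimes L_2')$, so that $P \otimes P'$ is a valid witness for the pair $(L_1, L_2 \otimes L_2')$. By Proposition \ref{prop-calculating-weil-pairing}(2) and the additivity of degree under tensor product,
$$ \weilpairing{\twistedcurve}{L_1}{L_2 \otimes L_2'} \equiv \deg(P) + \deg(P') \equiv \weilpairing{\twistedcurve}{L_1}{L_2} + \weilpairing{\twistedcurve}{L_1}{L_2'} \pmod 2. $$

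For linearity in the first factor, I would appeal to symmetry of the pairing, which is built into its definition. Swapping the middle summands of $\azumaya{L_1}{L_2} = \structuresheaf_{\twistedcurve} \oplus L_1 \oplus L_2 \oplus (L_1 \otimes L_2)$ interchanges the defining relation $e_1 e_2 = -e_2 e_1$ with its reverse $e_2 e_1 = -e_1 e_2$, giving a canonical algebra isomorphism $\azumaya{L_1}{L_2} \cong \azumaya{L_2}{L_1}$. Since the Weil pairing depends only on the isomorphism class of the Azumaya algebra (via Remark \ref{remark-end-M-iso-to-end-N}), we obtain $\weilpairing{\twistedcurve}{L_1}{L_2} = \weilpairing{\twistedcurve}{L_2}{L_1}$; combining this symmetry with linearity in the second factor yields linearity in the first. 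The main subtlety I anticipate is simply verifying cleanly that $\pi^*L$ is trivial on the $\mu_2$-torsor $\curveup$ in the twisted-curve setting, and that the swap $\azumaya{L_1}{L_2} \cong \azumaya{L_2}{L_1}$ is honestly an algebra map respecting the antisymmetric multiplication; both are essentially tautological but deserve explicit mention.
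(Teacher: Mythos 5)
Your proof is correct and follows essentially the same route as the paper: you prove the alternating property by noting that $\pi^*L$ is trivial on the double cover defined by $L$ so that $P=\structuresheaf$ is a witness of degree zero, and you prove linearity in the second variable by tensoring the witnesses $P,P'$ — both steps are verbatim the paper's argument. The only point of divergence is the symmetry step. The paper observes that $\mathcal{A}_{\{L_2,L_1\}}$ is the \emph{opposite} algebra of $\mathcal{A}_{\{L_1,L_2\}}$, hence isomorphic to $\sheafhom(M^*,M^*)$ whenever $\mathcal{A}_{\{L_1,L_2\}}\cong\sheafhom(M,M)$, and uses $\wedge^2 M^*\cong(\wedge^2 M)^{-1}$ to conclude antisymmetry, which modulo $2$ is the same as your symmetry. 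Your direct isomorphism $\mathcal{A}_{\{L_1,L_2\}}\cong\mathcal{A}_{\{L_2,L_1\}}$ does exist, but not by the naive swap as you describe it: the map that is the identity on $\structuresheaf\oplus L_1\oplus L_2$ and the canonical swap on $L_1\otimes L_2$ sends $e_1e_2=e_1\otimes e_2$ to $e_2\otimes e_1$, whereas in $\mathcal{A}_{\{L_2,L_1\}}$ one has $e_1e_2=-(e_2\otimes e_1)$, so it is not an algebra homomorphism; you must insert a sign on the top summand (this is the standard quaternion isomorphism $(a,b)\cong(b,a)$), or equivalently compose the identification with the opposite algebra with the main involution $x\mapsto\bar{x}$. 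This is a one-line fix, and in any case antisymmetry alone — the paper's route — already yields linearity in the first variable, since $-1=1$ in $\integers/2\integers$.
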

\begin{proof}
  \begin{sloppypar}
    Let $L_1,L_2,L'_2$ be $2\mbox{-torsion}$ line bundles on $\twistedcurve$. It is clear from the definition that the algebra $\azumaya{L_2}{L_1}$ is the opposite algebra of $\azumaya{L_1}{L_2}$. Therefore if ${\azumaya{L_1}{L_2} \cong \sheafhom_{\structuresheaf_{\twistedcurve}}(M,M)}$ for some locally free, finite $\structuresheaf_{\twistedcurve}\mbox{-module}$ $M$, then we have ${\azumaya{L_2}{L_1} \cong \sheafhom_{\structuresheaf_{\twistedcurve}}(M^*,M^*)}$ where $M^*$ is the dual module ${\sheafhom_{\structuresheaf_{\twistedcurve}}(M,\structuresheaf_{\twistedcurve})}$. Since ${\wedge^2 M^*}$ is isomorphic to ${\left(\wedge^2M\right)^{-1}}$, we see that ${\weilpairing{\twistedcurve}{L_1}{L_2} = - \weilpairing{\twistedcurve}{L_2}{L_1}}$. 

Let ${\curveup \xrightarrow{\pi} \twistedcurve}$ be the \etale{} double cover of $\twistedcurve$ defined by $L_1$ as in Proposition \ref{prop-calculating-weil-pairing}. Let $\tau$ denote the non-trivial involution of $\curveup$ over $\twistedcurve$. Then by Proposition \ref{prop-calculating-weil-pairing} there exist line bundles $P,P'$ on $\curveup$ such that 
    $$ \pi^*L_2 \cong \tau^*P \otimes_{\structuresheaf_{\curveup}} P^{-1} \text{ and } \pi^*L'_2 \cong \tau^*P' \otimes_{\structuresheaf_{\curveup}} P'^{-1} .$$
    Then we see that 
    $$ \tau^*(P \otimes_{\structuresheaf_{\curveup}} P') \otimes_{\structuresheaf_{\curveup}} (P \otimes_{\structuresheaf_{\curveup}} P')^{-1} \cong L_2 \otimes_{\structuresheaf_{\curveup}} L'_2.$$
    Therefore,
    \begin{align*}
      \weilpairing{\twistedcurve}{L_1}{L_2 \otimes_{\structuresheaf_{\curveup}} L'_2} = \deg(P \otimes_{\structuresheaf_{\curveup}} P') = \deg(P) + \deg(P') = \weilpairing{\twistedcurve}{L_1}{L_2} + \weilpairing{\twistedcurve}{L_1}{L'_2}
    \end{align*}
    Thus the Weil pairing is linear is the second variable. Since we have observed that ${\weilpairing{\twistedcurve}{L_1}{L_2} = - \weilpairing{\twistedcurve}{L_2}{L_1}}$, we conclude that the pairing is bilinear. Finally, the pullback $\pi^*(L_1)$ is the trivial line bundle on $\curveup$ and therefore we conclude from Proposition \ref{prop-calculating-weil-pairing} that ${\weilpairing{\twistedcurve}{L_1}{L_1} = 0}$. 
  \end{sloppypar}
\end{proof}
\begin{lemma}\label{remark_no_obstruction_for_torsors}
  In Situation \ref{situation_double_cover}, we have an exact sequence: 
$$ 0 \to \cohomology{1}{G}{\gmsheaf(\curveup)} \to \pic(\twistedcurve) \xrightarrow{\pi^*} \pic(\curveup)^G \to 0 .$$
\end{lemma}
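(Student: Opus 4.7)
The plan is to invoke the low-degree five-term exact sequence associated to the Hochschild-Serre spectral sequence for the Galois cover $\pi \colon \curveup \to \twistedcurve$, which has already been set up in the proof of Proposition \ref{prop-calculating-weil-pairing}. Reading off the bottom two rows of the $E_2$ page, that five-term sequence takes the form
$$ 0 \to \cohomology{1}{G}{\gmsheaf(\curveup)} \to \pic(\twistedcurve) \xrightarrow{\pi^*} \pic(\curveup)^G \xrightarrow{d^2_{0,1}} \cohomology{2}{G}{\gmsheaf(\curveup)} \to \etalecohomology{2}{\twistedcurve}{\gmsheaf}. $$
By Lemma \ref{lemma-h2_gm_is_zero} the rightmost term vanishes. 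Hence, to obtain the short exact sequence asserted in the statement, it is enough to show that $\cohomology{2}{G}{\gmsheaf(\curveup)}$ itself is zero: once this is established, $d^2_{0,1}$ has trivial target and $\pi^*$ is forced to be surjective.

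To verify this vanishing I would split into cases according to whether $\curveup$ is connected. If $L_1$ is non-trivial, then $\curveup$ is connected and proper over $k$, so $\gmsheaf(\curveup) = k^*$ with the trivial $G$-action, and the cohomology of a cyclic group of order two gives $\cohomology{2}{G}{k^*} = k^*/(k^*)^2$, which vanishes because $k$ is algebraically closed. If instead $L_1$ is trivial, then $\curveup = \twistedcurve \sqcup \twistedcurve$ with $\tau$ swapping the two components, so $\gmsheaf(\curveup) \cong k^* \oplus k^*$ is the induced $G$-module from the trivial subgroup, and Shapiro's lemma yields $\cohomology{i}{G}{\gmsheaf(\curveup)} = 0$ for every $i > 0$.

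The main bookkeeping to check is that the inflation and edge maps furnished by the spectral sequence really are the expected geometric maps: that the arrow $\pic(\twistedcurve) \to \pic(\curveup)^G$ is $\pi^*$ itself, and that the injection $\cohomology{1}{G}{\gmsheaf(\curveup)} \hookrightarrow \pic(\twistedcurve)$ is the standard procedure sending a crossed homomorphism to the corresponding twisted line bundle on the quotient stack. This identification is routine for Galois covers of Deligne--Mumford stacks, so beyond these conventions I do not anticipate any substantive difficulty in the argument.
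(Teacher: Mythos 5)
Your argument is correct and is essentially the paper's own proof: both invoke the low-degree exact sequence of the Hochschild--Serre spectral sequence for the Galois cover $\pi$ and reduce to the vanishing of $\cohomology{2}{G}{\gmsheaf(\curveup)}$, handled by the same connected/disconnected dichotomy (trivial $G$-action on $k^*$ with $k^*$ closed under square roots, versus the swapped $k^*\oplus k^*$, i.e.\ an induced module). The only cosmetic difference is that you spell out Shapiro's lemma and the identification of the edge maps, which the paper leaves implicit.
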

\begin{proof}
  \begin{sloppypar}
    We claim that $\cohomology{2}{G}{\gmsheaf(\curveup)}$ is zero. If $\curveup$ is disconnected, then we have ${\gmsheaf(\curveup) \cong k^* \oplus k^*}$ on which $G$ acts by swapping the summands. Therefore $\cohomology{2}{G}{\gmsheaf(\curveup)}$ is zero. If $\curveup$ is connected, we have $\gmsheaf(\curveup) \cong k^*$ with a trivial $G$ action. Since $k^*$ is closed under taking square roots, we see that $\cohomology{2}{G}{\gmsheaf(\curveup)}$ vanishes in this case as well. We have Hochschild-Serre spectral sequence for the cover ${\curveup \xrightarrow{\pi} \twistedcurve}$ that we considered in the proof of Proposition \ref{prop-calculating-weil-pairing}. A consequence of the spectral sequence is that we have a long exact sequence in low degrees which reads as follows:
\[
  \begin{tikzcd}[column sep = small]
    0 \arrow{r} & \cohomology{1}{G}{\gmsheaf(\curveup)} \arrow{r} & {\pic}(\twistedcurve) \arrow{r}{\pi^*} & {\pic}(\curveup)^G \arrow{r} & \cohomology{2}{G}{\gmsheaf(\curveup)} \arrow{r} & \etalecohomology{2}{\twistedcurve}{\gmsheaf}.
  \end{tikzcd}
\]
Since $\cohomology{2}{G}{\gmsheaf(\curveup)}$ is zero, we have the exact sequence as stated in the lemma.
  \end{sloppypar}
\end{proof}
\begin{situation}\label{situation-notation-twistedcurve-with-normalization}
We have the following commutative diagram:
\[
\begin{tikzcd}
  \widehat{\twistedcurve} \arrow{r}{\nu} \arrow{d}{\widehat{f}} & \twistedcurve \arrow{d}{f} \\
  \widehat{C} \arrow{r}{h} & C 
\end{tikzcd}
\]
The morphism $\twistedcurve \xrightarrow{f} C$ is the morphism from the twisted curve $\twistedcurve$ to its coarse space $C$. The morphism ${ \widehat{C} \xrightarrow{h} C}$ is the normalization of $C$. The normalization $\widehat{\twistedcurve}$ of $\twistedcurve$ is the fiber product $\widehat{C} \times_{C} \twistedcurve$. We denote the dual graph of $\twistedcurve$ by $\Gamma$, its vertex set by $V$, and the edge set by $E$. For ${e \in E}$, we denote the stabilizer of the node $e$ by $G_e$. We enumerate the vertex set $V$ as $\{1,2,3,\ldots,n\}$. The conected components of $\widehat{\twistedcurve}$ are smooth orbifold curves over $k$ and we denote them by $\{\twistedcurve_i\}_{i \in V}$. The morphism ${\widehat{f}: \widehat{\twistedcurve} \to \widehat{C}}$ maps each of these to its coarse space $C_i$ which is a connected component of $\widehat{C}$.
\end{situation}
\begin{proposition}\label{prop_pairing_coming_from_coarsespace}
In Situation \ref{situation-notation-twistedcurve-with-normalization}, let  $L_1, L_2 \in \pic(C)[2]$ be two torsion line bundles on the coarse space $C$. Then we have
$$ \weilpairing{\twistedcurve}{f^*L_1}{f^*L_2} \equiv \sum_{i=1}^{n}  \weilpairing{C_i}{h^*L_1|_{C_i}}{h^*L_2|_{C_i}} \text{ modulo }2$$ 
\end{proposition}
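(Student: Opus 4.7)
The plan is to use the Azumaya-algebra description of the Weil pairing and push the calculation through the normalization $\nu : \widehat{\twistedcurve} \to \twistedcurve$, thereby reducing the problem to the smooth components. First I invoke Lemma \ref{lemma-h2_gm_is_zero} to choose a rank-two locally free $\structuresheaf_{\twistedcurve}$-module $M$ with $\sheafhom_{\structuresheaf_{\twistedcurve}}(M,M) \cong \azumaya{f^*L_1}{f^*L_2}$, so that $\weilpairing{\twistedcurve}{f^*L_1}{f^*L_2} \equiv \deg(\wedge^2 M) \pmod{2}$. Setting $M_i = (\nu^*M)|_{\twistedcurve_i}$, the definition of the degree on a twisted curve as the sum over components of the normalization gives
$$\deg(\wedge^2 M) \;=\; \sum_{i=1}^{n} \deg(\wedge^2 M_i),$$
so it suffices to identify each summand modulo $2$ with $\weilpairing{C_i}{h^*L_1|_{C_i}}{h^*L_2|_{C_i}}$.

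For each $i$, I would compare with the classical picture on $C_i$. Choose a rank-two locally free $\structuresheaf_{C_i}$-module $N_i$ with $\sheafhom_{\structuresheaf_{C_i}}(N_i, N_i) \cong \azumaya{h^*L_1|_{C_i}}{h^*L_2|_{C_i}}$, so that $\weilpairing{C_i}{h^*L_1|_{C_i}}{h^*L_2|_{C_i}} \equiv \deg(\wedge^2 N_i) \pmod{2}$ by Remark \ref{remark-smooth-curve-pairing}. By Remark \ref{remark-azumaya-and-pullback}, the pullback $\widehat{f}_i^* N_i$ realises $\azumaya{\widehat{f}_i^*(h^*L_1|_{C_i})}{\widehat{f}_i^*(h^*L_2|_{C_i})}$ on $\twistedcurve_i$, which also equals $\sheafhom_{\structuresheaf_{\twistedcurve_i}}(M_i, M_i)$. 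Remark \ref{remark-end-M-iso-to-end-N} then produces a line bundle $T_i$ on $\twistedcurve_i$ with $M_i \cong \widehat{f}_i^* N_i \otimes T_i$, whence $\wedge^2 M_i \cong \widehat{f}_i^*(\wedge^2 N_i) \otimes T_i^{\otimes 2}$. Since $\wedge^2 N_i$ is pulled back from the coarse space, its degree is preserved under $\widehat{f}_i^*$, giving
$$\deg(\wedge^2 M_i) \;=\; \deg(\wedge^2 N_i) + 2 \deg(T_i).$$

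The remaining, and main, point is to show that $\sum_i \deg(T_i)$ is an integer, so that $2\sum_i \deg(T_i)$ is even and the mod-$2$ identity falls out upon summation. A priori each $\deg(T_i)$ lies only in $\tfrac{1}{d}\integers$, but by Remark \ref{remark_non_integral_degree} its fractional part equals $\sum_p \theta_p(T_i)/d_p$ summed over the stacky points $p$ of $\twistedcurve_i$. Each node $e$ of $\twistedcurve$ contributes two stacky points $p_i, p_j$ in $\widehat{\twistedcurve}$ (possibly on the same component), and since $N_i$ and $N_j$ carry trivial characters at these points, the values $\theta_{p_i}(T_i)$ and $\theta_{p_j}(T_j)$ coincide with the characters of $\nu^*M$ there. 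These two characters are inverse to one another by precisely the balancing exploited in the proof of Proposition \ref{Prop_degree_is_integer}, so the sum of their lifts in $\{0, 1, \ldots, d_e-1\}$ is either $0$ or $d_e$, and each node's contribution to $\sum_i \deg(T_i)$ is an integer. Hence $\sum_i \deg(T_i) \in \integers$, and summing the previous display over $i$ and reducing modulo $2$ completes the proof.
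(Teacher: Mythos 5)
Your proof is correct, but it takes a genuinely different route from the paper's. The paper trivializes the Azumaya algebra $\mathcal{A}_{\{L_1,L_2\}}$ on the coarse nodal curve $C$ itself, using the vanishing of $\etalecohomology{2}{C}{\mathbf{G}_{m}}$, writes it as $\sheafhom_{\structuresheaf_{C}}(M,M)$ for a rank-two module $M$ on $C$, and only then pulls back along $f$; the degree of $\wedge^2 f^*M$ is then computed as $\sum_i \deg(\wedge^2 M|_{C_i})$, and each summand is by construction the quantity computing the Weil pairing on $C_i$, so no fractional degrees ever enter. You instead trivialize on the twisted curve (Lemma \ref{lemma-h2_gm_is_zero}), restrict along the normalization, and compare each $M_i$ with an independently chosen module $N_i$ on $C_i$; this forces you to introduce the twisting line bundles $T_i$ and to prove $\sum_i \deg(T_i) \in \integers$, which you do correctly by a rank-two analogue of Proposition \ref{Prop_degree_is_integer}: since $\widehat{f}_i^*N_i$ carries the trivial stabilizer representation, the stabilizer of a node acts on the fiber of $\nu^*M$ at each branch through the scalar character of the corresponding $T$, and the two branch inclusions differ by inversion of the stabilizer, so the two characters are inverse and each node contributes an integer (your phrase ``characters of $\nu^*M$'' is loose for a rank-two module, but the intended scalar-action statement is exactly right, including the case where both branches lie on one component). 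What the paper's choice of where to trivialize buys is that the comparison module on each $C_i$ is literally $M|_{C_i}$, so the entire fractional-degree bookkeeping disappears; what your version buys is that it works directly on the twisted curve and makes explicit how the balancing condition at the nodes rescues integrality, at the cost of the extra character computation.
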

\begin{proof}
  \begin{sloppypar}
    The Azumaya algebra $\azumaya{L_1}{L_2}$ on $C$ is trivial as $\etalecohomology{2}{C}{G_m}$ vanishes. Therefore we have ${\azumaya{L_1}{L_2} \cong \sheafhom_{\structuresheaf_{C}}(M,M)}$ for some locally free, finite $\structuresheaf_{C}$-module $M$. Then it follows (See Remark \ref{remark-azumaya-and-pullback}) that we have
    \begin{align*}
      \weilpairing{\twistedcurve}{f^*L_1}{f^*L_2} &\equiv \deg(\wedge^2 f^*M) \text{ modulo }2 \\
                                                  &= \sum_{i=1}^n \deg(\wedge^2M|_{C_i}) \\ 
                                                  &\equiv \sum_{i=1}^n \weilpairing{C_i}{h^*L_1}{h^*L_2} \text{ modulo }2
    \end{align*}
  \end{sloppypar}
\end{proof}

\begin{theorem}\label{thm_combinatorial_pairing_induced_by_Weil}
In Situation \ref{situation-notation-twistedcurve-with-normalization}, assume that the stabilizers of the nodes of $\twistedcurve$ are of even order. Let $H \subset \pic(\twistedcurve)[2]$ be the subgroup defined by ${H = \{ L \in \pic(\twistedcurve)[2] \, \vert \, \nu^*(L) \text{ is trivial.} \}}$. Then the Weil pairing on $\twistedcurve$ induces a pairing: 
$$ H \times \frac{\pic(\twistedcurve)[2]}{\pic(C)[2]} \to \autgroup. $$
Moreover, with isomorphisms ${H \cong \cohomology{1}{\Gamma}{\autgroup}}$ and ${\frac{\pic(\twistedcurve)[2]}{\pic(C)[2]} \cong \homology{1}{\Gamma}{\autgroup}}$ (See Propositions \ref{prop_two_torsion_trivial_on_normalization} and \ref{prop_exact_sequence_picard_two_torsion_of_twisted}), the above pairing can realized as the graph pairing (as in Remark \ref{remark_graph_pairing}):
$$ \cohomology{1}{\Gamma}{\autgroup} \times \homology{1}{\Gamma}{\autgroup} \xrightarrow{\graphpairing{\Gamma}{-}{-}} \autgroup .$$
\end{theorem}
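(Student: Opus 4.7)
The plan has two stages. First I would show that the Weil pairing descends to the claimed quotient. For $L_1 \in H$ and $L = f^*L'' \in f^*\pic(C)[2]$, Proposition \ref{prop_two_torsion_trivial_on_normalization} writes $L_1 = f^*L_1'$ with $h^*L_1'$ trivial on every component of $\widehat C$. Proposition \ref{prop_pairing_coming_from_coarsespace} then expresses $\weilpairing{\twistedcurve}{L_1}{L}$ as a sum of Weil pairings on smooth curves, each of which vanishes because its first entry is trivial. Hence the pairing factors through $H \times (\pic(\twistedcurve)[2]/\pic(C)[2])$.

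Second, I would identify the descended pairing with the graph pairing by computing via the double cover of Proposition \ref{prop-calculating-weil-pairing}. Fix a cocycle $\gamma \in C^1(\Gamma, \autgroup)$ representing the class of $L_1$ and a cycle $\alpha \in C_1(\Gamma,\autgroup)$ in the class of $L_2$ modulo $\pic(C)[2]$. Because $\nu^*L_1$ is trivial, the étale double cover $\pi\colon \curveup \to \twistedcurve$ splits over $\widehat\twistedcurve$, giving $\widehat{\curveup} = \widehat\twistedcurve^{(0)} \sqcup \widehat\twistedcurve^{(1)}$ with the involution $\tau$ swapping sheets, and the dual graph of $\curveup$ is the topological double cover of $\Gamma$ encoded by $[\gamma]$. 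I would construct a line bundle $P$ on $\curveup$ satisfying $\tau^*P \cong P \otimes \pi^*L_2$ by prescribing $\widehat P = \nu^*P$ on $\widehat{\curveup}$. Labeling the two stacky points of $\widehat\twistedcurve$ above a node $e$ as $p_{e,+}, p_{e,-}$ and writing $L_{e,\pm} = \mathcal{O}_{\widehat\twistedcurve}([p_{e,\pm}])$ for the associated orbifold line bundles, I set
$$\widehat P^{(0)} := \bigotimes_{e} L_{e,-}^{\gamma(e)\alpha(e) l_e/2} \otimes \widehat f^*D_0, \qquad \widehat P^{(1)} := \widehat P^{(0)} \otimes \nu^*L_2,$$
for an auxiliary $D_0 \in \pic(\widehat C)$. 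A case analysis on $\gamma(e) \in \{0,1\}$ checks that the induced characters at every node of $\curveup$ sum to zero, so $\widehat P$ admits a balancing, and by Proposition \ref{prop-calculating-weil-pairing}(1) the gluing data of $P$ can be chosen so that the cocycle equation holds as line bundles on $\curveup$.

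The heart of the proof is the degree calculation. Decomposing $\nu^*L_2 = \bigotimes_e L_{e,+}^{\alpha(e) l_e/2} L_{e,-}^{\alpha(e) l_e/2} \otimes \widehat f^*E'$, the vanishing $\deg \nu^*L_2 = 0$ pins down $\deg E' = -\sum_e \alpha(e)$. Rewriting $\widehat P^{(1)}$ in the canonical character form $\bigotimes_e L_{e,+}^{\alpha(e) l_e/2} L_{e,-}^{(1-\gamma(e))\alpha(e) l_e/2} \otimes \widehat f^*D_1$ produces extra factors $L_{e,-}^{\gamma(e)\alpha(e) l_e}$, which the identity $L_{e,-}^{l_e} \cong \widehat f^*\mathcal{O}_{\widehat C}(p_{e,-})$ absorbs into $\widehat f^*D_1$, shifting $\deg D_1$ relative to $\deg D_0 + \deg E'$ by exactly $\sum_e \gamma(e)\alpha(e)$. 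Combining this with the character-part contributions from $\widehat P^{(0)}$ and $\widehat P^{(1)}$ telescopes to $\deg P = 2\deg D_0 + \sum_e \gamma(e)\alpha(e)$, so Proposition \ref{prop-calculating-weil-pairing}(2) yields $\weilpairing{\twistedcurve}{L_1}{L_2} \equiv \graphpairing{\Gamma}{\gamma}{\alpha} \pmod 2$. The most delicate step is this accounting of the $L_{e,-}^{l_e}$ factors; it is precisely the residue surviving their absorption into $\widehat f^*$-pullbacks that produces the graph pairing on the nose.
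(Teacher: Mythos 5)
Your first stage (descent of the pairing to $H \times \pic(\mathcal{X})[2]/\pic(C)[2]$ via Propositions \ref{prop_two_torsion_trivial_on_normalization} and \ref{prop_pairing_coming_from_coarsespace}) coincides with the paper's argument and is fine, and your degree bookkeeping is internally consistent: \emph{if} a line bundle $P$ on the double cover $\widetilde{\mathcal{X}}$ exists with $\tau^*P \cong P \otimes \pi^*L_2$ \emph{and} with restriction to the normalization equal to your prescribed $\widehat P^{(0)} \sqcup \widehat P^{(1)}$, then $\deg P = 2\deg D_0 + \sum_e \gamma(e)\alpha(e)$ and Proposition \ref{prop-calculating-weil-pairing}(2) gives the graph pairing. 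The genuine gap is the existence of that $P$. Proposition \ref{prop-calculating-weil-pairing}(1) produces \emph{some} solution $P_0$ of the cocycle equation; it does not let you prescribe its pullback to the normalization. Your character check only shows that $\widehat P$ descends, for \emph{some} choice of gluing scalars at the nodes, to a line bundle $P$ on $\widetilde{\mathcal{X}}$; for such a $P$ the discrepancy $\tau^*P \otimes P^{-1} \otimes \pi^*L_2^{-1}$ is trivial on the normalization, i.e.\ lies in the gluing torus $\ker\bigl(\pic(\widetilde{\mathcal{X}}) \to \pic(\widehat{\widetilde{\mathcal{X}}})\bigr)$, and you still have to show this class can be killed by readjusting the gluings. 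That is a nontrivial cocycle-triviality statement which "the gluing data of $P$ can be chosen" does not establish, so as written you compute the degree of a bundle that has not been shown to satisfy the hypothesis of Proposition \ref{prop-calculating-weil-pairing}(2). (You also assert without proof that the dual graph of $\widetilde{\mathcal{X}}$ is the double cover of $\Gamma$ encoded by $\gamma$; the paper cites Jensen--Len for this.)

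The gap is closable, but the closure is exactly where the content lies, and it essentially reinstates the paper's mechanism. One repair: take the $P_0$ of Proposition \ref{prop-calculating-weil-pairing}(1); since $\tau^*P_0 \otimes P_0^{-1} \cong \pi^*L_2$, the two sheets of $\nu^*P_0$ also differ by $\nu^*L_2$, so $\nu^*P_0$ and your $\widehat P$ differ by the \emph{same} line bundle $N$ on both sheets; using that the nodes of $\widetilde{\mathcal{X}}$ join the sheets across exactly when $\gamma(e) = 1$, the balancing of $P_0$ forces $N$ to have inverse characters at the two points over each node of $\mathcal{X}$, hence $N \cong \nu^*M$ for some $M \in \pic(\mathcal{X})$, and $P := P_0 \otimes \pi^*M$ is a genuine solution with your prescribed restriction. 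Alternatively (this is the paper's route) one does not match the full pullback at all: the relation forces $\deg$ of $P_0$ on conjugate components to agree, a twist by $\pi^*M$ (Lemma \ref{remark_no_obstruction_for_torsors}, Lemma \ref{lemma_pic_of_twisted_exact_sequence}) normalizes the characters, and the parity of $\deg P_0$ is then read off from the half-integrality of component degrees via Remark \ref{remark_non_integral_degree}, with the paper organizing this through the dichotomy of whether the preimage of a cycle is connected. Either way you need the conjugation-symmetry of component degrees or the descent of balanced bundles from the normalization, neither of which appears in your write-up; once one of these is added, your computation (which has the nice feature of working uniformly in $\alpha$, without reducing to a single cycle or to the connectivity case analysis) goes through.
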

\begin{proof}
  \begin{sloppypar}
    Let $L_1 \in H$ and $L_3 \in \pic(C)[2]$ be $2\mbox{-torsion}$ line bundles. Since $L_1$ is trivial on the normalization, we have ${L_1 \cong f^*L_1'}$ for some $L_1' \in \pic(C)[2]$ such that $h^*L_1'$ is trivial on each component of $\widehat{C}$ (See Proposition \ref{prop_two_torsion_trivial_on_normalization}). It follows from Proposition \ref{prop_pairing_coming_from_coarsespace} that we have
$${ \weilpairing{\twistedcurve}{L_1}{f^*L_3} = \sum_{i=1}^n \weilpairing{C_i}{h^*L_1'}{h^*L_3} = 0}.$$
Thus we see that the Weil pairing restricted to ${H \times \pic(\twistedcurve)}$ factors through a pairing 
    $$ H \times \frac{\pic(\twistedcurve)[2]}{\pic(C)[2]} \to \autgroup $$
    as claimed. 

    We use the second part of Proposition \ref{prop-calculating-weil-pairing} to prove that the above pairing is identical to the graph pairing. Let $\curveup \xrightarrow{\pi} \twistedcurve$ be the \etale{} double over given by $L_1$. Let $\tau$ denote the involution of $\curveup$ over $\twistedcurve$. We have the commutative diagram as below where ${\widehat{\curveup} = \widehat{\twistedcurve} \times_{\twistedcurve} \curveup}$ is the normalization of $\curveup$.
    \[
      \begin{tikzcd}
        \widehat{\curveup} \arrow{r}{\widetilde{\nu}} \arrow{d}{\widehat{\pi}} & \curveup \arrow{d}{\pi} \\
        \widehat{\twistedcurve}  \arrow{r}{\nu} & \twistedcurve
      \end{tikzcd}
    \]
    Since $\nu^*L_1$ is trivial, the double cover ${\widehat{\curveup} \to \widehat{\twistedcurve}}$ is trivial. Therefore, $\curveup$ has two copies of every irreducible component of $\twistedcurve$ and the components are glued together along nodes as prescribed by $L_1$. We say two irreducible components of $\curveup$ are conjugate if they are interchanged by the involution $\tau$. Let $L_2 \in \pic(\twistedcurve)[2]$ be a $2\mbox{-torsion}$ line bundle that we want to pair with $L_1$. By Proposition \ref{prop-calculating-weil-pairing}, there exists a line bundle $P$ on $\curveup$ such that 
    $$ P \otimes \tau^*(P)^{-1} \cong \pi^*(L_2) $$
and for any such line bundle $P$, the Weil pairing $\weilpairing{\twistedcurve}{L_1}{L_2}$ is given by the parity of the degree of $P$. In particular, we note that $P$ can be replaced by $P \otimes \pi^*(L)$ for any line bundle $L$ on $\twistedcurve$. In view Remark \ref{remark_edge_disjoint_cycles}, we may assume that the image of $L_2$ in $\homology{1}{\Gamma}{\autgroup}$ is a cycle (See Proposition \ref{prop_exact_sequence_picard_two_torsion_of_twisted}). That is, we assume there are distinct irreducible components $\twistedcurve_1, \twistedcurve_2, \ldots, \twistedcurve_l$ of $\twistedcurve$ and distinct nodes $e_1, e_2, \dots, e_l$  such that the following holds:
    \begin{itemize}
    \item For $1 \leq i \leq l-1$, the node $e_i$ lies on $\twistedcurve_i$ and $\twistedcurve_{i+1}$, and the node $e_l$ lies on $\twistedcurve_1$ and $\twistedcurve_l$. For brevity, we write this by saying $\{\twistedcurve_1,e_1, \twistedcurve_2, e_2, \ldots, \twistedcurve_l, e_l,\twistedcurve_1\}$ is a cycle in $\twistedcurve$. 
    \item The characters of $L_2$ are trivial at all nodes except at $\{e_1, e_2, \ldots, e_l\}$.
    \item For $1 \leq i \leq l$, the character $\character{e_i}{\twistedcurve}(L_2) \in \pic(BG_{e_i})$ is the unique non-trivial element of order two.
    \end{itemize}
Let $\alpha \in \homology{1}{\Gamma}{\autgroup}$ denote the homology class of $e_1 + e_2 + \ldots + e_l$ and let $\gamma \in \cohomology{1}{\Gamma}{\autgroup}$ denote the cohomology class corresponding to $L_1$ as given by the isomorphism in Proposition \ref{prop_two_torsion_trivial_on_normalization}. Let $\graphup$ denote the dual graph associated to the curve $\curveup$. The key observation is that  
    $$ \pi^{-1}(e_1 + e_2 + \ldots + e_l) \in \homology{1}{\graphup}{\autgroup} $$
    is one cycle of length $2l$ if $\graphpairing{\Gamma}{\gamma}{\alpha} = 1$, and is a disjoint union of two cycles of length $l$ each, if $\graphpairing{\Gamma}{\gamma}{\alpha} = 0$ (See Theorem 5.11, \cite{Jensen_Yoav_kernel_is_isotropic}).

    \textbf{Case 1: }Suppose $\graphpairing{\Gamma}{\gamma}{\alpha} = 0$. Then for $i \in \{1, 2, \ldots l\}$, there are conjugate irreducible components ${\{\twistedcurve_i^a, \twistedcurve_i^b\}}$ of $\curveup$ which are mapped isomorphically to $\twistedcurve_i$, and there are conjugate nodes $e_i^a,e_i^b$ lying above $e_i$, such that
    $$ \{\twistedcurve^a_1,e^a_1, \twistedcurve^a_2, e^a_2, \ldots, \twistedcurve^a_n, e^a_n,\twistedcurve^a_1\} \text{ and }\{\twistedcurve^b_1,e^b_1, \twistedcurve^b_2, e^b_2, \ldots, \twistedcurve^b_n, e^b_n,\twistedcurve^b_1\} $$
are cycles in $\curveup$. The line bundle $\pi^*(L_2) \cong P \otimes \tau^*(P)^{-1}$ has trivial characters at all the nodes except at $\{e_i^a,e_i^b | 1 \leq i \leq l\}$ where it has the non-trivial character of order two. Therefore, we can twist $P$ by $\pi^*(L)$ for some conveniently chosen line bundle $L$ on $\twistedcurve$ (See Lemma \ref{remark_no_obstruction_for_torsors}) to ensure the following:
    \begin{enumerate}
    \item \label{item_outside_node_trivial} If $e$ is a node of $\curveup$, and $e \notin \{e_i^a, e_i^b | 1 \leq i \leq l \}$, then the character of $P$ at $e$ is trivial. 
    \item \label{item_a_node_trivial} The characters of $P$ at the nodes $\{e_i^a | 1 \leq i \leq l \}$ are trivial. 
    \end{enumerate}
Since $\pi^*(L_2)$ is a torsion line bundle, its degree on any irreducible component of the normalization of ${\curveup}$ is zero. As we have ${P \otimes \tau(P)^{-1} \cong \pi^*(L_2)}$, we conclude that the degree of $P$ on a component is equal to its degree on the conjugate component. Thus, in order to show that the total degree of $P$ is even, it suffices to argue that the degree of $P$ on any irreducible component is integral. Since $P$ satisfies the first condition, it follows that the degree of $P$ on a component that does not contain any of the nodes ${\{e^a_i, e^b_i | 1 \leq i \leq l \}}$ is integral. Since $P$ also satisfies the second condition, the degree of $P$ on any of the components ${\{\twistedcurve_i^a | 1 \leq i \leq l \}}$ is integral. Since the components $\{\twistedcurve_i^b\}$ are conjugate to $\{\twistedcurve_i^a\}$, the degree on $P$ on any of the components ${\{\twistedcurve_i^b | 1 \leq i \leq l \}}$ is integral as well. It follows that the total degree of $P$ is even and hence we have $\weilpairing{\twistedcurve}{L_1}{L_2} = 0$. 

    \textbf{Case 2:} Suppose $\graphpairing{\Gamma}{\gamma}{\alpha} = 1$. Then for $i \in \{1, 2, \ldots l\}$, there are conjugate irreducible components ${ \{\twistedcurve_i^a, \twistedcurve_i^b\}}$ of $\curveup$ which are mapped isomorphically to $\twistedcurve_i$, and there are nodes $e_i^a,e_i^b$ lying above $e_i$ such that
    $$ \{\twistedcurve_1^a, e^a_1, \twistedcurve_2^a, e^a_2, \ldots,e^a_{l-1}, \twistedcurve_l^a, e^a_l, \twistedcurve_1^b, e^b_1, \twistedcurve_2^b, \ldots ,e_{l-1}^b, \twistedcurve_l^b, e_l^b, \twistedcurve_1^a \}$$
    is a cycle of length $2l$ in $\curveup$. As in the previous case, the line bundle $\pi^*(L_2)$ has trivial characters at all nodes except at $\{e_i^a,e_i^b | 1 \leq i \leq n\}$ where it has the non-trivial character of order two. Again by the same argument we used earlier, we can ensure that the line bundle $P$ meets the conditions \ref{item_outside_node_trivial} and \ref{item_a_node_trivial} of the previous case. Therefore the degree of $P$ on any component that does not contain the nodes ${\{e^a_i, e^b_i | 1 \leq i \leq n \}}$ is integral. From the second condition, we conclude that the degree of $P$ is integral on the components ${\{\twistedcurve_i^a | 2 \leq i \leq l\}}$. Since the degree is constant on conjugate pairs, the degree of $P$ is also integral on the components ${\{\twistedcurve_i^b | 2 \leq i \leq n\}}$. Now, the character of $P$ at $e^a_l$ is trivial but $\pi^*(L_2)$ has the non-trivial character of order two at $e^a_l$. Hence the character of $P$ at $e^b_l$ must be the non-trivial element of order two. Therefore, we conclude that the degree of $P$ on $\twistedcurve_1^a$ is a half-integer (See Remark \ref{remark_non_integral_degree}). In conclusion, the degree of $P$ on any component of ${\curveup}$ is integral and equal to its degree on the conjugate component except for the conjugate pair $\{\twistedcurve_1^a, \twistedcurve_1^b\}$. The degree of $P$ on $\twistedcurve_1^a$ equals its degree on $\twistedcurve_1^b$ and it is a half-integer. It follows that the total degree of $P$ is odd, and hence we have $\weilpairing{\twistedcurve}{L_1}{L_2} = 1$.
  \end{sloppypar}
\end{proof}
\begin{remark}\label{remark_pairing_when_not_all_are_even}
  \begin{sloppypar}
    When not all stabilizers of the nodes in $\twistedcurve$ are of even order, the induced pairing on ${H \times \frac{\pic(\twistedcurve)[2]}{\pic(C)[2]}}$ corresponds to the pairing:
    $$ \cohomology{1}{\Gamma}{\autgroup} \times \homology{1}{\Gamma'}{\autgroup} \to \autgroup $$
    where $\Gamma'$ is the modified graph obtained by deleting the edges of $\Gamma$ corresponding to the nodes with odd stabilizers (See Remark \ref{remark_edge_disjoint_cycles}). This corresponds to the graph theoretic pairing where we regard $\homology{1}{\Gamma'}{\autgroup}$ as a natural subgroup of $\homology{1}{\Gamma}{\autgroup}$ (See Remark \ref{remark_edge_disjoint_cycles}).
  \end{sloppypar}
\end{remark}
\begin{proposition}\label{prop-weil-pairing-is-non-degenerate}
The Weil pairing on $\twistedcurve$ is non-degenerate if and only if the stabilizers of the non-separating nodes are of even order or equivalently if $\lvert \pic(\twistedcurve)[2] \rvert = 2^{2g} $.
\end{proposition}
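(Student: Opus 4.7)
The plan is to analyze the Weil pairing via the filtration $0 \subset H \subset \pic(C)[2] \subset \pic(\twistedcurve)[2]$, exploiting the fact that each graded piece has a clean description in terms of either a perfect graph-theoretic pairing or the classical Weil pairing on smooth components. The equivalence of the two stated hypotheses (``non-separating nodes have even stabilizer'' and ``$\lvert\pic(\twistedcurve)[2]\rvert = 2^{2g}$'') is immediate from Remark \ref{remark_edge_disjoint_cycles}, so only the non-degeneracy equivalence requires work. Throughout, let $\Gamma'$ denote the subgraph of $\Gamma$ obtained by deleting the edges corresponding to nodes with odd stabilizer.

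First I would handle the backward direction. Suppose all non-separating nodes have even stabilizer, so that $\homology{1}{\Gamma'}{\autgroup} = \homology{1}{\Gamma}{\autgroup}$ by Remark \ref{remark_edge_disjoint_cycles}, and let $L \in \pic(\twistedcurve)[2]$ lie in the left kernel. Pairing $L$ with elements of $H$ and invoking the perfect pairing of Theorem \ref{thm_combinatorial_pairing_induced_by_Weil} forces the image of $L$ in $\pic(\twistedcurve)[2]/\pic(C)[2]$ to vanish, so $L = f^*L_0$ for some $L_0 \in \pic(C)[2]$. Pairing $L$ against arbitrary $f^*L_0'$ and applying Proposition \ref{prop_pairing_coming_from_coarsespace} reduces the vanishing to a sum of classical Weil pairings on the components $C_i$ of $\widehat{C}$. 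At this point I would verify that $h^* : \pic(C)[2] \to \pic(\widehat{C})[2]$ is surjective by observing that the long exact sequence in cohomology associated to $1 \to \structuresheaf_C^* \to h_*\structuresheaf_{\widehat{C}}^* \to S \to 1$ (with $S$ a skyscraper sheaf of units supported at the nodes) identifies $\ker(\pic(C) \to \pic(\widehat{C}))$ with a product of copies of $k^*$, which is divisible; the snake lemma then gives surjectivity on $2$-torsion. Non-degeneracy of the classical Weil pairing (Remark \ref{remark-smooth-curve-pairing}) therefore forces $h^*L_0 = 0$, so $L \in H$. A final appeal to the perfect pairing of Theorem \ref{thm_combinatorial_pairing_induced_by_Weil} yields $L = 0$.

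For the forward direction I would argue contrapositively. Suppose some non-separating node has odd stabilizer; then by Remark \ref{remark_edge_disjoint_cycles} the inclusion $\homology{1}{\Gamma'}{\autgroup} \subsetneq \homology{1}{\Gamma}{\autgroup}$ is strict, and perfectness of the full graph pairing (Remark \ref{remark_graph_pairing}) produces a nonzero $\gamma \in \cohomology{1}{\Gamma}{\autgroup}$ annihilating $\homology{1}{\Gamma'}{\autgroup}$. Let $L_1 \in H$ be the line bundle corresponding to $\gamma$ under Proposition \ref{prop_two_torsion_trivial_on_normalization}. Then $L_1 \neq 0$ lies in the left kernel: it pairs trivially with $f^*\pic(C)[2]$ by the opening argument in the proof of Theorem \ref{thm_combinatorial_pairing_induced_by_Weil} (which uses only Propositions \ref{prop_two_torsion_trivial_on_normalization} and \ref{prop_pairing_coming_from_coarsespace} and so requires no even-stabilizer hypothesis), and it pairs trivially with $\pic(\twistedcurve)[2]/\pic(C)[2]$ by Remark \ref{remark_pairing_when_not_all_are_even}, since under that identification the pairing is $\cohomology{1}{\Gamma}{\autgroup} \times \homology{1}{\Gamma'}{\autgroup} \to \autgroup$, on which $\gamma$ vanishes by construction.

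The hard part should be the surjectivity of $h^* : \pic(C)[2] \to \pic(\widehat{C})[2]$ used in the backward direction, since it is the only ingredient not already recorded verbatim in the preceding sections; everything else is careful bookkeeping of how the Weil pairing behaves across the three graded pieces of the filtration.
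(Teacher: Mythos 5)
Your proposal is correct and follows essentially the same route as the paper: the same filtration $H \subset \pic(C)[2] \subset \pic(\mathcal{X})[2]$, with Proposition \ref{prop_pairing_coming_from_coarsespace} handling the coarse-space piece, Theorem \ref{thm_combinatorial_pairing_induced_by_Weil} together with Remarks \ref{remark_pairing_when_not_all_are_even} and \ref{remark_edge_disjoint_cycles} handling the graph piece; you have merely unpacked the paper's "kernel of the restricted pairing" statements into an explicit element chase in both directions. The only genuine addition is that you spell out the surjectivity of $\pic(C)[2] \to \bigoplus_i \pic(C_i)[2]$ via divisibility of $\ker(\pic(C) \to \pic(\widehat{C}))$, a fact the paper's proof asserts implicitly through its short exact sequence; your divisibility argument is the right one and matches the computation already done in Proposition \ref{prop_two_torsion_trivial_on_normalization}.
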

\begin{proof}
  \begin{sloppypar}
We have the exact sequence:
    $$ 0 \to \pic(C)[2] \xrightarrow{f^*}  \pic(\twistedcurve)[2] \to \homology{1}{\Gamma'}{\autgroup} \to 0 $$
where $\Gamma'$ is the modified dual graph obtained by deleting the edges corresponding to the nodes with odd stabilizers (See Remark \ref{remark_edge_disjoint_cycles}). We regard $\pic(C)[2]$ as a subgroup of $\pic(\twistedcurve)[2]$. Let $H \subseteq \pic(C)[2]$ denote the subgroup of $\pic(\twistedcurve)[2]$ consisting of those line bundles which are trivial on the normalization. Then we have $H \cong \cohomology{1}{\Gamma}{\autgroup}$ (See Proposition \ref{prop_two_torsion_trivial_on_normalization}). With the notation as in Situation \ref{situation-notation-twistedcurve-with-normalization}, we have the exact sequence:
$$ 0 \to H \to \pic(C)[2] \to \displaystyle\bigoplus_{i \in V}\pic(C_i)[2] \to 0 .$$
We know that the Weil pairing is non-degenerate on each $C_i$ which is a smooth, proper curve. Therefore, it follows from Proposition \ref{prop_pairing_coming_from_coarsespace} that the kernel of the Weil pairing restricted to ${\pic(C)[2]}$ is $H$. Since the graph pairing is perfect, we conclude that that the pairing restricted to ${H \times \pic(\twistedcurve)[2]}$ has kernel $\pic(C)[2]$ in the second factor (See Proposition \ref{thm_combinatorial_pairing_induced_by_Weil} and Remark \ref{remark_pairing_when_not_all_are_even}). It follows that the Weil pairing on $\pic(\twistedcurve)[2]$ is non-degenerate if and only if the induced pairing:
 $${H \times \frac{\pic(\twistedcurve)[2]}{\pic(C)[2]} \to \autgroup}$$
is perfect. This is true exactly when the stabilizers of the non-separating nodes are of even order (See Remark \ref{remark_pairing_when_not_all_are_even} and Remark \ref{remark_edge_disjoint_cycles}).
  \end{sloppypar}
\end{proof}
}
\section{Weil pairing in a family }
{
\newcommand{\family}{\mathscr{X}}
\newcommand{\specialfib}{\mathscr{X}_0}
\newcommand{\genfib}{\mathscr{X}_{{\eta}}}
\newcommand{\familycse}{C}
\newcommand{\specialfibcse}{C_{0}}
\newcommand{\genfibcse}{C_{\eta}}
\newcommand{\weilpairing}[3]{\langle #2,#3 \rangle_{#1}}
\newcommand{\autgroup}{\integers/2\integers}
\newcommand{\azumaya}[2]{\mathcal{A}_{\{#1,#2\}}}
\newcommand{\groupstackcompact}{\overline{M_g}^{\structuresheaf,2}}
\newcommand{\twistedcurve}{\mathcal{X}}
\newcommand{\rootscheme}[2]{\overline{#1^{1/#2}}}
\newcommand{\rootstack}[2]{#1^{1/#2}}
\newcommand{\divisors}{\text{Div}}
\newcommand{\prindivisors}{\text{Prin}}
\newcommand{\geogenfibcse}{C_{\overline{\eta}}}

Let $B$ be a strictly Heneslian local ring with the residue field $k$ such that $\spec(B)$ has a unique generic point which we denote by $\eta$. A special case of interest is when $B$ is a complete discrete valuation ring. By a family of twisted curves over a scheme we mean the notion of twisted curves due to Abramovich and Vistoli as outlined by Olsson in \cite{_Martin_olsson_log_twisted_curves} (Definition 1.2).
\begin{situation}\label{situation-family-of-curves}

  \begin{sloppypar}
    Let ${\family \xrightarrow{} \spec(B)}$ be a family of twisted curves of genus $g \geq 2$. Let ${j : \specialfib \to \family}$ denote the special fiber and ${i: \genfib \to \family}$ denote the generic fiber. Let ${\familycse \to \spec(B)}$ be the coarse space of $\family$ and we denote its special fiber and the generic fiber by $\specialfibcse$ and $\genfibcse$, respectively.
  \end{sloppypar}
\end{situation}
\begin{proposition}\label{prop_deforming_line_bundles}
  \begin{sloppypar}
    In Situation \ref{situation-family-of-curves}, let $r$ be a positive integer invertible on $\spec(B)$. Then we have the following:
    \begin{enumerate}
    \item \label{item_bijection_on_r_torsion_injection_on_generic} 
      The map ${\pic(\family)[r] \xrightarrow{j^*} \pic(\specialfib)[r]}$
      is bijective.
      \item  \label{item_dvr_injection} If $B$ is a discrete valuation ring, then the map ${\pic(\family)[r] \xrightarrow{i^*} \pic(\genfib)[r]}$ is injective. 
        \item \label{item_line_bundle_deforms} The map $\pic(\family) \xrightarrow{j^*} \pic(\specialfib)$ is surjective. 
        \end{enumerate}
  \end{sloppypar}
\end{proposition}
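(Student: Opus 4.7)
The plan is to prove the three parts by different mechanisms: Kummer theory combined with proper base change for part~(1), deformation theory combined with formal GAGA for part~(3), and the divisor-class exact sequence for part~(2).

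For part~(1), I would replace $\pic[r]$ by \'etale cohomology with $\mu_r$ coefficients. Since $r$ is invertible on $B$, the Kummer sequence produces
\[
0 \to \gmsheaf(\family)/\gmsheaf(\family)^r \to \etalecohomology{1}{\family}{\mu_r} \to \pic(\family)[r] \to 0
\]
and the analogous sequence on $\specialfib$. Because $\family \to \spec(B)$ is proper with connected fibers, $\gmsheaf(\family) = B^{\ast}$; the strict Henselization hypothesis together with $r$ being invertible forces $B^{\ast}$ to be $r$-divisible (Hensel's lemma lifts $r$-th roots from $k^{\ast}$), so the left-hand term vanishes. The same reasoning eliminates the corresponding term on $\specialfib$ since $\gmsheaf(\specialfib) = k^{\ast}$. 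Hence both $\pic(\family)[r]$ and $\pic(\specialfib)[r]$ are computed by $\etalecohomology{1}{-}{\mu_r}$. Proper base change applied to the torsion sheaf $\mu_r$ and the proper morphism $\family \to \spec(B)$, together with the strictly Henselian hypothesis on $\spec(B)$, then identifies $\etalecohomology{1}{\family}{\mu_r}$ with $\etalecohomology{1}{\specialfib}{\mu_r}$ functorially in restriction, which yields the desired bijection.

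For part~(3), I would argue by step-by-step deformation followed by formal GAGA. Let $\specialfib_n \subset \family$ denote the $n$-th infinitesimal thickening of $\specialfib$ with ideal $I_n$. The exact sequence of \'etale sheaves of groups
\[
0 \to I_n/I_{n+1} \to \structuresheaf_{\specialfib_{n+1}}^{\ast} \to \structuresheaf_{\specialfib_n}^{\ast} \to 1,
\]
where the injection is $a \mapsto 1+a$ (using $(I_n/I_{n+1})^2 = 0$), places the obstruction to lifting a line bundle from $\specialfib_n$ to $\specialfib_{n+1}$ in $\etalecohomology{2}{\specialfib}{I_n/I_{n+1}}$, which vanishes because $\specialfib$ is one-dimensional and $I_n/I_{n+1}$ is coherent. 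Inductively, any $L_0 \in \pic(\specialfib)$ lifts to a compatible family $\{L_n \in \pic(\specialfib_n)\}$, and Olsson's formal existence theorem for coherent sheaves on the proper Deligne--Mumford stack $\family/B$ (after passing to the completion $\widehat{B}$ if necessary, and descending by Artin approximation) algebraizes these to an $L \in \pic(\family)$ with $j^{\ast}L \cong L_0$.

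For part~(2), with $B$ a DVR and uniformizer $\pi$, let $\specialfib_1,\ldots,\specialfib_n$ be the irreducible components of the special fiber. Regularity of $\family$ and reducedness of $\specialfib$ yield the divisor-class exact sequence
\[
\bigoplus_{i=1}^n \integers \cdot [\specialfib_i] \to \pic(\family) \xrightarrow{i^{\ast}} \pic(\genfib) \to 0,
\]
and the only relation among the generators is $\sum_i [\specialfib_i] = [\text{div}(\pi)] = 0$; hence the kernel of $i^{\ast}$ is isomorphic to $\integers^n / \integers \cdot (1,\ldots,1) \cong \integers^{n-1}$, which is torsion free, so its $r$-torsion vanishes and $i^{\ast}$ is injective on $r$-torsion. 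The main technical obstacle is the algebraization in part~(3), namely invoking Grothendieck's existence theorem for coherent sheaves on a proper DM stack and, if needed, descending from $\widehat{B}$ back to $B$; part~(1) likewise rests on proper base change for \'etale cohomology of DM stacks, while part~(2) is essentially formal once regularity of $\family$ and reducedness of $\specialfib$ are granted.
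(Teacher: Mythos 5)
The serious problem is your part (2). Situation 6.2 does not say that $\mathscr{X}$ is a regular scheme, that its special fiber has smooth components, or that the generic fiber is smooth: $\mathscr{X}$ is a Deligne--Mumford stack which, \etale{}-locally at a node of the special fiber, looks like $\left[\spec\left(B[z,w]/(zw-t)\right) \middle/ \mu_l\right]$ with $t\in\mathfrak{m}_B$ arbitrary (possibly $t=\pi^m$ with $m\geq 2$, or $t=0$ when the node does not smooth out, in which case $\mathscr{X}_\eta$ is itself a twisted nodal curve). For such $t$ the total space is neither regular nor locally factorial (the local class group of $zw=\pi^m$ is $\integers/m$), so the irreducible components of $\mathscr{X}_0$ need not be Cartier, the excision sequence $\bigoplus_i\integers\cdot[\mathscr{X}_{0,i}]\to\pic(\mathscr{X})\to\pic(\mathscr{X}_\eta)\to 0$ is unavailable for $\pic$, and your identification of $\ker(i^*)$ with $\integers^n/\integers\cdot(1,\dots,1)$ is unjustified; even in the regular case the claim that the full fiber is the \emph{only} relation requires an argument (connectedness of the fiber plus a Zariski-type lemma), and one must also handle the stacky contributions to $\pic(\mathscr{X})$ that Weil divisors do not see. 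The paper's proof needs none of this: an $r$-torsion line bundle on $\mathscr{X}$ is a section of Chiodo's rigidified scheme of $r$-th roots of $\structuresheaf_{\mathscr{X}}$, which is \etale{} and separated over $\spec(B)$, and two sections of a separated $B$-scheme agreeing at the generic point of the trait coincide by the valuative criterion of separatedness. Your argument would only cover the regular arithmetic-surface case of Section 7, not the proposition as stated.

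Your part (1) is correct and genuinely different from the paper: you trade the root-scheme-plus-Henselian-sections argument for the Kummer sequence (the $H^0(\gmsheaf)$ contributions vanish since $B^{*}$ and $k^{*}$ are $r$-divisible when $r$ is invertible and $B$ is strictly Henselian with algebraically closed residue field) together with proper base change for $\mu_r$; this is fine, granted $\Gamma(\mathscr{X},\structuresheaf)=B$ and proper base change for Deligne--Mumford stacks, both of which are standard and the latter of which the paper also uses. Part (3) is essentially sound over a \emph{complete} base (the obstruction lies in $H^2$ of a coherent sheaf on a one-dimensional proper tame stack, which vanishes, and Grothendieck existence holds for proper Deligne--Mumford stacks), but your final descent from $\widehat{B}$ to $B$ by Artin approximation requires $B$ to be excellent (a G-ring), a hypothesis not present in Situation 6.2 and not guaranteed in Corollary 6.7, where $B$ is a strict Henselization of an arbitrary Noetherian local ring. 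The paper avoids completion entirely: it lifts Cartier divisors on the coarse special fiber through sections of the smooth locus of the coarse family (which exist over a Henselian base), and for general $L_0$ observes that $L_0^{n}$ descends to the coarse space for some $n$ invertible on $B$ and then lifts $L_0$ itself as a section, over the closed point, of the \etale{} separated scheme of $n$-th roots. So either add completeness/excellence to your hypotheses or replace the approximation step and part (2) by the Henselian-section and separatedness arguments.
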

\begin{proof}
  \begin{sloppypar}
    These assertions essentially follow from Chiodo's constructions in \cite{chiodo_stable_twisted_and_r_spin}. Let $F$ be a line bundle on $\family$ whose relative degree is a multiple of $r$. Then there is a Deligne-Mumford stack ${\rootstack{F}{r} \to \spec(B)}$ that parameterizes $r^{th}$ roots of the line bundle $F$. An object of $\rootstack{F}{r}$ over a $B$-scheme $B'$ is a pair $(M,\lambda)$ where $M$ is a line bundle on ${\family_{B'} = \family \times_B B'}$ and ${\lambda: M^{\otimes r} \to F_{B'}}$ is an isomorphism. The stack $\rootstack{F}{r}$ is \etale{} and separated over $\spec(B)$ (Proposition 3.4, \cite{chiodo_stable_twisted_and_r_spin}). After rigidifying along the automorphisms given by the group sheaf $\mu_r$ on $B$, we have a scheme $\rootscheme{F}{r} \to \spec(B)$ whose fiber over a geometric point $q$ is ${\{L \in \pic(\family_{q}) \lvert L^{\otimes r} \cong F_{q}\}}$. We refer to \cite{chiodo_stable_twisted_and_r_spin} (Definition 3.5) and references therein. The scheme $\rootscheme{F}{r}$ is \etale{} and separated over $\spec(B)$. Moreover, if all geometric fibers $\family_q$ have $r^{2g}$ roots of $F_p$ in $\pic(\family_{q})$, then $\rootscheme{F}{r}$ is a finite, \etale{} scheme over $\spec(B)$ (Proposition 3.7, \cite{chiodo_stable_twisted_and_r_spin}). 

    We apply the above construction to the $r^{th}$ roots of the trivial line bundle $\structuresheaf_{\family}$ on $\family$. Since $B$ is strictly Henselian, and the morphism $\rootscheme{\structuresheaf_{\family}}{r} \to \spec(B)$ is \etale{} and separated, any point in the closed fiber uniquely extends to a section (See \cite{milne_etale_cohomology_book}, Theorem 4.2, (c) and (d)). This proves part \ref{item_bijection_on_r_torsion_injection_on_generic}. If $B$ is a disecrete valuation ring then we apply the valuative criterion of separatedness to the separated morphism $\rootscheme{\structuresheaf_{\family}}{r} \to \spec(B)$. This proves part \ref{item_dvr_injection}.

Let $L_0 \in \pic(\specialfib)$ be a line bundle on $\specialfib$. We say $L_0$ can be lifted to $\family$ if it lies in the image of the map $j^*$. We first argue that if $L_0$ is a pull back of a line bundle on the coarse space $\specialfibcse$ of $\specialfib$, then it can be lifted. A line bundle on the nodal curve $\specialfibcse$ is given by a Cartier divisor. Such a divisor can be deformed to a Cartier divisor on the coarse space of $\family$. We argue this is as follows. Let $p$ be a closed point in the smooth locus of $\specialfibcse$. Then there is a Zariski neighborhood $U \subset \familycse$ of $p$ such that $U \to \spec(B)$ is smooth. Since $B$ is strictly Henselian and $U$ is smooth over $B$, the section given by $p$ over the closed point of $\spec(B)$ can be extended to a section $\spec(B) \xrightarrow{s} U$. Since $U \to \spec(B)$ is smooth, the section $s$ gives a Cartier divisor in all geometric fibers of $\familycse$. Therefore it is a Cartier divisor in $\familycse$ (See \cite[\href{https://stacks.math.columbia.edu/tag/062Y}{Tag 062Y}]{stacks-project}). Thus, we see that the Cartier divisors on $\specialfibcse$ can be lifted to Cartier divisors on $\familycse$. Therefore, if $L_0 \in \pic(\specialfib)$ is a pull back of a line bundle on $\specialfibcse$, then it can be lifted. In the general case, there is a positive integer $n$ invertible on $\spec(B)$ such that $L_0^n$ is a pull back of a line bundle on $\specialfibcse$. Since $L_0^n$ can be lifted, there exists a line bundle $L$ on $\family$ whose relative degree is a multiple of $n$, and which restricts to $L_0^n$ on the special fiber. Then we have the scheme $\rootscheme{L}{n}$ which is \etale{} and separated over $\spec(B)$. Moreover, $\rootscheme{L}{n}$ admits a section over the closed point given by $L_0$. Since $B$ is Henselian, this section can be extended to a section over $\spec(B)$ (See \cite{milne_etale_cohomology_book}, Theorem 4.2). In other words, $L_0$ can be lifted to a line bundle on $\family$. 
\end{sloppypar}
\end{proof}
\begin{remark}\label{remark_r_divisible_stabs}
  \begin{sloppypar}
    Suppose in the Situation \ref{situation-family-of-curves}, the stabilizers of the nodes have orders divisible by $r$, then $\rootscheme{\structuresheaf_{\family}}{r}$ is a finite \etale{} cover of $\spec(B)$ (Proposition 3.7, \cite{chiodo_stable_twisted_and_r_spin}). Since $B$ is strictly Henselian, such a cover must be trivial (See \cite{milne_etale_cohomology_book}, Theorem 4.2). Therefore we have bijections ${\pic(\family)[r] \xrightarrow{j^*} \pic(\specialfib)[r]}$ and ${\pic(\family)[r] \xrightarrow{i^*} \pic(\genfib)[r]}$. 
  \end{sloppypar}
\end{remark}
\begin{lemma}\label{lemma_r_torsion_free_G_m_cohomology}
  \begin{sloppypar}
    In Situation \ref{situation-family-of-curves}, let $r$ be a positive integer invertible on $\spec(B)$. Then the cohomology group ${\etalecohomology{2}{\family}{\gmsheaf[\family]}}$ is free of $r\mbox{-torsion}$. 
  \end{sloppypar}
\end{lemma}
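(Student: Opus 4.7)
The plan is to apply the Kummer sequence
$$ 1 \to \mu_r \to \gmsheaf[\family] \xrightarrow{(\cdot)^r} \gmsheaf[\family] \to 1 $$
on $\family$. The associated long exact sequence identifies the $r\mbox{-torsion}$ subgroup of $\etalecohomology{2}{\family}{\gmsheaf[\family]}$ with the cokernel of the connecting map $\delta\colon \pic(\family)/r \to \etalecohomology{2}{\family}{\mu_r}$. Therefore it suffices to show that $\delta$ is surjective.

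The approach is to compare with the special fiber. Since $\family \to \spec(B)$ is proper and $\mu_r$ is locally constant constructible (as $r$ is invertible on $\spec(B)$), the proper base change theorem for Deligne--Mumford stacks identifies the stalk of $R^n \pi_* \mu_r$ at the geometric closed point with $\etalecohomology{n}{\specialfib}{\mu_r}$. Because $B$ is strictly Henselian, $\spec(B)$ has vanishing positive \etale{} cohomology with torsion coefficients, so the Leray spectral sequence for $\pi$ collapses and yields an isomorphism $j^*\colon \etalecohomology{n}{\family}{\mu_r} \xrightarrow{\sim} \etalecohomology{n}{\specialfib}{\mu_r}$. Invoking proper base change in the stacky setting is the main technical ingredient here.

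The Kummer sequence on $\specialfib$ gives the exact sequence
$$ \pic(\specialfib) \xrightarrow{r} \pic(\specialfib) \xrightarrow{\delta_0} \etalecohomology{2}{\specialfib}{\mu_r} \to \etalecohomology{2}{\specialfib}{\gmsheaf[\specialfib]}, $$
in which the right-hand group vanishes by Lemma \ref{lemma-h2_gm_is_zero}. Hence the induced map $\delta_0\colon \pic(\specialfib)/r \to \etalecohomology{2}{\specialfib}{\mu_r}$ is an isomorphism.

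To conclude, the Kummer sequences on $\family$ and $\specialfib$ are compatible under $j^*$, so we obtain a commutative square
\[
\begin{tikzcd}
\pic(\family)/r \arrow[r,"\delta"] \arrow[d,"j^*"'] & \etalecohomology{2}{\family}{\mu_r} \arrow[d,"j^*"] \\
\pic(\specialfib)/r \arrow[r,"\delta_0"] & \etalecohomology{2}{\specialfib}{\mu_r}
\end{tikzcd}
\]
in which the right vertical is an isomorphism by proper base change, the bottom is an isomorphism by the previous paragraph, and the left vertical is surjective by Proposition \ref{prop_deforming_line_bundles}(\ref{item_line_bundle_deforms}). It follows that $\delta$ is surjective, so $\etalecohomology{2}{\family}{\gmsheaf[\family]}$ is free of $r\mbox{-torsion}$, as desired.
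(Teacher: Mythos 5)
Your proposal is correct and follows essentially the same route as the paper: reduce via the Kummer sequence to surjectivity of the connecting map into $\etalecohomology{2}{\mathscr{X}}{\mu_r}$, identify that group with $\etalecohomology{2}{\mathscr{X}_0}{\mu_r}$ by proper base change over the strictly Henselian base, and conclude using the surjectivity of $\pic(\mathscr{X}) \to \pic(\mathscr{X}_0)$ from Proposition \ref{prop_deforming_line_bundles} together with the vanishing of $\etalecohomology{2}{\mathscr{X}_0}{\gmsheaf}$ from Lemma \ref{lemma-h2_gm_is_zero}. The only cosmetic difference is that you note $\delta_0$ is an isomorphism and phrase the conclusion as surjectivity of $\pic(\mathscr{X})/r \to \etalecohomology{2}{\mathscr{X}}{\mu_r}$, where the paper only needs and uses surjectivity of the bottom connecting map.
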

\begin{proof}
We have the Kummer exact sequence:
$$ 0 \to \mu_r \to \gmsheaf[\family] \xrightarrow{\times r} \gmsheaf[\family] \to 0 $$
of \etale{} sheaves on $\family$. A part of the long exact sequence associated to the Kummer sequence reads as follows: 
$$ \cdots \to \etalecohomology{1}{\family}{\gmsheaf[\family]} \xrightarrow{h} \etalecohomology{2}{\family}{\mu_r} \to \etalecohomology{2}{\family}{\gmsheaf[\family]} \xrightarrow{\times 2} \etalecohomology{1}{\family}{\gmsheaf[\family]} \to \cdots $$
It suffices to argue that the connecting morphism ${\etalecohomology{1}{\family}{\gmsheaf[\family]} \xrightarrow{h} \etalecohomology{2}{\family}{\mu_r}}$ is surjective. By Proper base change theorem for torsion sheaves (See \cite[\href{https://stacks.math.columbia.edu/tag/095T}{Tag 095T}]{stacks-project}), we have an isomorphism ${\etalecohomology{2}{\family}{\mu_r} \xrightarrow{j^{-1}} \etalecohomology{2}{\specialfib}{j^{-1}\mu_r} \cong \etalecohomology{2}{\specialfib}{\mu_r}}$. The connecting morphism $h$ fits in the commutative diagram below:
\[
  \begin{tikzcd}
    \etalecohomology{1}{\family}{\gmsheaf[\family]} \arrow{r}{h} \arrow{d} & \etalecohomology{2}{\family}{\mu_r} \arrow{d}{\rotatebox{90}{$\cong$}} \\
    \etalecohomology{1}{\specialfib}{\gmsheaf[\specialfib]} \arrow{r} & \etalecohomology{2}{\specialfib}{\mu_r} \arrow{r} & \etalecohomology{2}{\specialfib}{\gmsheaf[\specialfib]}
  \end{tikzcd}
\]
where the vertical map ${\etalecohomology{1}{\family}{\gmsheaf[\family]} \to \etalecohomology{1}{\specialfib}{\gmsheaf[\specialfib]}}$ is given by pullback along the morphism $j$ composed with the natural map of sheaves: ${j^{-1}\gmsheaf[\family] \to \gmsheaf[\specialfib]}$. This map corresponds to ${\pic(\family) \xrightarrow{j^*} \pic(\specialfib)}$ which is surjective by Proposition \ref{prop_deforming_line_bundles}. The bottom row is part of the long exact sequence associated to the Kummer exact sequence on $\specialfib$. Since $\etalecohomology{2}{\specialfib}{\gmsheaf[\specialfib]}$ is zero (See Lemma \ref{lemma-h2_gm_is_zero}), the map ${\etalecohomology{1}{\specialfib}{\gmsheaf[\specialfib]} \to \etalecohomology{2}{\specialfib}{\mu_r}}$ is surjective. Therefore, we conclude that the map $h$ is surjective as well. 
\end{proof}
\begin{proposition}\label{prop-pairing-good-over-family}
  \begin{sloppypar}
    In Situation \ref{situation-family-of-curves}, suppose $2$ is invertible on $\spec(B)$. Let ${L_1,L_2 \in \pic(\family)[2]}$ be two $2\mbox{-torsion}$ line bundles on $\family$. Then we have equality of Weil pairings: ${\weilpairing{\specialfib}{L_1|_{\specialfib}}{L_2|_{\specialfib}} = \weilpairing{\genfib}{L_1|_{\genfib}}{L_2|_{\genfib}}}$.
  \end{sloppypar}
\end{proposition}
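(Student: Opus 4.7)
The plan is to construct the Azumaya algebra $\azumaya{L_1}{L_2}$ directly on the total family $\family$, show that it is a matrix algebra there, and then transport the Weil-pairing calculation to both fibers simultaneously via a single rank-two locally free sheaf $M$ on $\family$.

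First I would observe that the construction of $\azumaya{L_1}{L_2}$ from Section 4 makes sense over any quasi-compact, separated Deligne-Mumford stack on which $2$ is invertible, and in particular produces an Azumaya algebra on $\family$ whose Brauer class in $\brauer(\family)$ is $2\mbox{-torsion}$. Since the Brauer class map $\brauer(\family) \to \etalecohomology{2}{\family}{\gmsheaf[\family]}$ is injective, and since Lemma \ref{lemma_r_torsion_free_G_m_cohomology} (applied with $r = 2$) gives that $\etalecohomology{2}{\family}{\gmsheaf[\family]}$ has no $2\mbox{-torsion}$, the Brauer class of $\azumaya{L_1}{L_2}$ on $\family$ vanishes. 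Hence there exists a finite, locally free $\structuresheaf_{\family}\mbox{-module}$ $M$ of rank two with $\azumaya{L_1}{L_2} \cong \sheafhom_{\structuresheaf_{\family}}(M,M)$.

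Next, I would apply Remark \ref{remark-azumaya-and-pullback} to the pullbacks along $j$ and $i$: on the special fiber one gets $\azumaya{j^*L_1}{j^*L_2} \cong \sheafhom_{\structuresheaf_{\specialfib}}(j^*M,j^*M)$, and on the generic fiber one gets $\azumaya{i^*L_1}{i^*L_2} \cong \sheafhom_{\structuresheaf_{\genfib}}(i^*M,i^*M)$. By the definition of the Weil pairing (Definition \ref{definition-weil-pairing}), this identifies
\[
\weilpairing{\specialfib}{L_1|_{\specialfib}}{L_2|_{\specialfib}} \equiv \deg\bigl(\wedge^2 j^*M\bigr) \pmod{2}, \qquad \weilpairing{\genfib}{L_1|_{\genfib}}{L_2|_{\genfib}} \equiv \deg\bigl(\wedge^2 i^*M\bigr) \pmod{2}.
\]
Since $j^*\wedge^2 M \cong \wedge^2 j^*M$ and similarly for $i^*$, both parities are read off from fiberwise restrictions of the single line bundle $\wedge^2 M$ on $\family$.

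The final step is to note that the degree of a line bundle is locally constant on the base of a flat proper family of twisted curves (it is computed by an Euler characteristic that is locally constant in flat families). Therefore $\deg(\wedge^2 j^*M) = \deg(\wedge^2 i^*M)$, and in particular the two parities agree, which gives the desired equality of Weil pairings. The only genuinely delicate input is the vanishing of the $2\mbox{-torsion}$ in $\etalecohomology{2}{\family}{\gmsheaf[\family]}$ used to trivialize the global Azumaya algebra; that is where Lemma \ref{lemma_r_torsion_free_G_m_cohomology}, which in turn rests on the surjectivity of $\pic(\family) \to \pic(\specialfib)$ from Proposition \ref{prop_deforming_line_bundles}, is essential, and it is the step most prone to going wrong if one attempted to work fiberwise rather than globally.
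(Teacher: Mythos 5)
Your proposal is correct and follows essentially the same route as the paper: trivialize $\mathcal{A}_{\{L_1,L_2\}}$ globally on $\mathscr{X}$ using the absence of $2$-torsion in $\text{H}^{2}_{\acute{e}t}(\mathscr{X},\mathbf{G}_m)$ from Lemma \ref{lemma_r_torsion_free_G_m_cohomology}, write it as $\sheafhom(M,M)$ for a rank-two locally free module, and compare the two pairings via the fiberwise degrees of $\wedge^2 M$, which are constant in the flat proper family. Your added justifications (injectivity of the Brauer class map, the pullback compatibility of Remark \ref{remark-azumaya-and-pullback}, and the Euler-characteristic argument for constancy of degree) are simply details the paper leaves implicit.
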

\begin{proof}
The Azumaya algebra $\azumaya{L_1}{L_2}$ on $\family$ corresponds to a $2\mbox{-torsion}$ Brauer class in the cohomology group $\etalecohomology{2}{\family}{\gmsheaf}$. From the above Lemma \ref{lemma_r_torsion_free_G_m_cohomology}, we conclude that $\azumaya{L_1}{_2}$ must be a matrix algebra. In other words, we have a finite, locally free $\structuresheaf_{\family}\mbox{-module}$ $V$ on $\family$ such that $\azumaya{L_1}{L_2} \cong \sheafhom_{\structuresheaf_{\family}}(V,V)$. Since the degree the line bundle $\wedge^2V$ is constant on the fibers, we conclude that the Weil pairings are equal as claimed.
\end{proof}
\begin{corollary}\label{corollary_pairing_consistent_general_case}
  \begin{sloppypar}
    Let $Z$ be a connected Noetherian scheme on which $2$ is invertible. Let $\family \to Z$ be a family of twisted curves over $Z$. Let ${L_1, L_2 \in \pic(\family)[2]}$ be $2\mbox{-torsion}$ line bundles on $\family$. Then the Weil pairing $\weilpairing{\family_{p}}{L_1}{L_2}$ is independent of the geometric fiber $\family_{p}$ of $\family$ over $Z$. 
  \end{sloppypar}
\end{corollary}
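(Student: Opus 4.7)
Define $\phi \colon Z \to \autgroup$ on geometric points by $\phi(\bar z) := \weilpairing{\family_{\bar z}}{L_1|_{\family_{\bar z}}}{L_2|_{\family_{\bar z}}}$. Since $\autgroup$ is discrete and $Z$ is connected, it is enough to prove that $\phi$ is locally constant on $Z$. My plan is to trivialize the Azumaya algebra $\azumaya{L_1}{L_2}$ \'etale locally on $Z$ and then recognize $\phi$ as the parity of the degree of the top exterior power of the trivializing bundle, which is locally constant in flat proper families.

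Fix a geometric point $\bar z \to Z$ with image $z$, and let $B = \structuresheaf_{Z,z}^{sh}$. The proof of Lemma \ref{lemma_r_torsion_free_G_m_cohomology} relies only on the strictly Henselian property of the base (via Proposition \ref{prop_deforming_line_bundles}(3)) together with proper base change and the vanishing of $\etalecohomology{2}{\specialfib}{\gmsheaf}$ from Lemma \ref{lemma-h2_gm_is_zero}; it does not use uniqueness of the generic point of $B$. The same argument applied to $\family \times_Z \spec(B) \to \spec(B)$ shows that $\etalecohomology{2}{\family \times_Z \spec(B)}{\gmsheaf}$ is $2$-torsion free. The $2$-torsion Brauer class of $\azumaya{L_1}{L_2}|_{\family \times_Z \spec(B)}$ therefore vanishes, yielding a rank-two locally free sheaf $V$ on $\family \times_Z \spec(B)$ with $\sheafend(V) \cong \azumaya{L_1}{L_2}|_{\family \times_Z \spec(B)}$.

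Since $\family \to Z$ is of finite presentation and $\spec(B)$ is the cofiltered limit of \'etale neighborhoods of $\bar z$, the sheaf $V$ together with the above isomorphism descends to a connected \'etale neighborhood $U \to Z$ of $\bar z$; call the resulting sheaf $V_U$ on $\family \times_Z U$. For any geometric point $\bar u \to U$ mapping to $\bar z' \to Z$, Remark \ref{remark-azumaya-and-pullback} gives $\sheafend(V_U|_{\family_{\bar u}}) \cong \azumaya{L_1|_{\family_{\bar z'}}}{L_2|_{\family_{\bar z'}}}$, so the definition of the Weil pairing forces $\phi(\bar z') \equiv \deg(\wedge^2 V_U|_{\family_{\bar u}}) \pmod 2$. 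The degree of a line bundle in a flat proper family of twisted curves is a locally constant function on the base (by local constancy of Euler characteristics), so this parity is constant on $U$. Because the image of $U \to Z$ is a Zariski open neighborhood of $z$, $\phi$ is locally constant at $z$, and hence constant on all of $Z$ by connectedness.

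The main subtleties are (i) verifying that the cohomological argument of Lemma \ref{lemma_r_torsion_free_G_m_cohomology} really does go through over a strictly Henselian base with possibly several minimal primes, and (ii) the standard descent of the trivializing module $V$ from $\spec(B)$ to an \'etale neighborhood of $\bar z$, via limit arguments for coherent sheaves on finite presentation morphisms. Both are routine, and I expect (ii) to be the most technically involved step to spell out carefully; everything else then follows from the cohomological vanishing over strictly Henselian bases together with local constancy of degree in flat families.
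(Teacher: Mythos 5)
Your proposal is correct, but it is organized differently from the paper's proof, so a comparison is worthwhile. The paper reduces to $Z$ irreducible, passes to the strict henselization $B=\structuresheaf_{Z,s}^{sh}$ at each point, and then simply invokes Proposition \ref{prop-pairing-good-over-family}: over $\spec(B)$ the Weil pairing on the closed fiber agrees with the pairing on the generic fiber, and the latter is governed by the generic point of $Z$, so all fibers agree. You instead never compare with a generic fiber: you rerun the mechanism behind Proposition \ref{prop-pairing-good-over-family} over $\spec(B)$ (2-torsion-freeness of $\etalecohomology{2}{\cdot}{\gmsheaf}$ via Lemma \ref{lemma_r_torsion_free_G_m_cohomology}, hence a rank-two trivializing bundle $V$ for $\mathcal{A}_{\{L_1,L_2\}}$), spread $V$ out to an \'etale neighborhood $U$ of the chosen geometric point by a standard limit argument, and conclude that the pairing is the parity of $\deg(\wedge^2 V|_{\text{fiber}})$, which is locally constant, finishing by connectedness. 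Both arguments hinge on the same key inputs (Lemma \ref{lemma_r_torsion_free_G_m_cohomology}, Remark \ref{remark-azumaya-and-pullback}, constancy of the fiberwise degree of $\wedge^2 V$ in a flat proper family of twisted curves), but the global shape is different: the paper's route is shorter given Proposition \ref{prop-pairing-good-over-family}, while yours buys some robustness. In particular, your observation (i) is correct and relevant: the proof of Lemma \ref{lemma_r_torsion_free_G_m_cohomology}, and part (3) of Proposition \ref{prop_deforming_line_bundles} which it uses, only need $B$ strictly Henselian (proper base change to the closed fiber, Henselian lifting of sections of the \'etale root scheme, and vanishing of $\etalecohomology{2}{\mathscr{X}_0}{\gmsheaf}$), not a unique generic point; this sidesteps the fact that $\structuresheaf_{Z,s}^{sh}$ need not be irreducible even after the paper's reduction to irreducible $Z$, so your version is, if anything, slightly more careful than the paper's at this point. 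The remaining ingredients you cite are routine and at the same level of rigor as the paper: the descent of $V$ and of the algebra isomorphism from the cofiltered limit $\spec(B)=\lim U_i$ is standard, and the local constancy of the degree follows from local constancy of Euler characteristics together with the Riemann--Roch identity $\chi(L)=\deg(L)+1-g$ on twisted curves (the balancing condition makes the age corrections at each node cancel against $\chi(\mathrm{B}G_e,L|_e)$), which is exactly the fact the paper itself asserts without proof in Proposition \ref{prop-pairing-good-over-family}. One small point to make explicit: the value you assign to a point of $Z$ does not depend on the choice of geometric point above it (degrees are insensitive to extension of the algebraically closed base field), which is what lets the open-cover plus connectedness argument conclude.
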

\begin{proof}
  We may assume that $Z$ is irreducible. Let $p$ be a geometric point of $Z$ lying over $s \in Z$. We have the stalk ${\structure^{sh}_{Z,s}}$ of the \etale{} structure sheaf which is a strictly Henselian ring. Then $\family \times_{Z} \spec({\structure^{sh}_{Z,s}})$ is a family of twisted curves over $\spec({\structure^{sh}_{Z,s}})$ as in Situation \ref{situation-family-of-curves}. From the above Proposition \ref{prop-pairing-good-over-family} we conclude that the Weil pairing on $\family_{p}$ is equal to the Weil pairing on the generic fiber.
\end{proof}
Consequently, we get a well-defined Weil pairing over the moduli of twisted curves. We can use deformation methods to give an alternate argument to conclude some of the properties of the Weil pairing which we proved directly in the previous section. The crucial tool is that we can deform a twisted curve to a smooth, proper curve. 
\begin{remark}\label{remark_versal_deformations}
We have a moduli space of twisted curves of genus $g$ in which the space of smooth, proper curves forms an open, dense substack (See \cite{chiodo_stable_twisted_and_r_spin}). Let $\twistedcurve$ be a twisted curve over $k$ with nodes $\{e_i\}_{i = 1}^{m}$. Let $l_i$ be the order of the stabilizer at the node $e_i$. Then $\twistedcurve$ admits a versal deformation space given by ${R[z_1,z_2,\ldots,z_m]/(z_1^{l_1}-t_1,z_2^{l_2}-t_2,\ldots,z_m^{l_m}-t_m)}$ where $\{t_i\}$ are parameters in $R$ that define the nodes (See Theorem 2.13, \cite{chiodo_stable_twisted_and_r_spin}). A consequence of this is that there exists a family of twisted curves over a complete discrete valuation ring $B$ whose special fiber is isomorphic to $\twistedcurve$ and the generic fiber is a smooth, proper curve (See Appendix B, \cite{Baker_specialization} for the case of schematic curves). Moreover, we can ensure that \etale{} locally at a node $e$ of the special fiber, the morphism of the family to its coarse space has the form:
$$ \left[ \spec\left( B[z,w]/(zw-t) \right)  \middle/ G_{e} \right] \xrightarrow{x \to z^{l}, y \to w^{l}} \spec(B[x,y]/(xy-t^{l})) $$
where $t$ is a uniformizer of $B$, and the cyclic group $G_{e}$ acts via $z \to \zeta_l \cdot z, w \to \zeta_l^{-1} \cdot w$ for a primitive ${l}^{th}$ root $\zeta_l$ of unity.
\end{remark}
\begin{proposition}\label{prop-alternate-deformation-argument}
Let $\twistedcurve$ be a twisted cure of genus $g \geq 2$ over $k$. Then the Weil pairing on $\twistedcurve$ is bilinear and alternating. If the non-separating nodes of $\twistedcurve$ have stabilizers of even order, then it is non-degenerate.
\end{proposition}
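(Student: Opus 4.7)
The plan is to reduce to the smooth case by deformation, using the versal deformation of Remark \ref{remark_versal_deformations}. That produces a family $\family \to \spec(B)$ over a complete discrete valuation ring $B$ whose special fiber is $\twistedcurve$ and whose generic fiber $\genfib$ is a smooth, proper curve of genus $g$. The strategy is then to lift $2$-torsion line bundles from $\twistedcurve$ to $\family$, restrict them to the (geometric) generic fiber, and use Proposition \ref{prop-pairing-good-over-family} to pull the desired properties of the classical Weil pairing on the smooth curve (Remark \ref{remark-smooth-curve-pairing}) back to $\twistedcurve$.

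For bilinearity and the alternating property, my plan is as follows. After passing, if necessary, to the strict henselization of $B$ so that Proposition \ref{prop_deforming_line_bundles} applies, any pair $L_1,L_2 \in \pic(\twistedcurve)[2]$ lifts uniquely to $\widetilde{L}_1,\widetilde{L}_2 \in \pic(\family)[2]$ by part \ref{item_bijection_on_r_torsion_injection_on_generic}. This lift is a group homomorphism, so it is enough to verify the two properties after restriction to $\genfib$. Proposition \ref{prop-pairing-good-over-family} equates the pairing on $\twistedcurve$ with the pairing on $\genfib$, and on the smooth curve $\genfib$ these properties are classical (Remark \ref{remark-smooth-curve-pairing}).

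For non-degeneracy under the stabilizer hypothesis, the plan is to show that the lifting identifies $\pic(\twistedcurve)[2]$ with the full $2$-torsion of the geometric generic fiber. By Proposition \ref{prop_exact_sequence_picard_two_torsion_of_twisted} together with Remark \ref{remark_edge_disjoint_cycles}, the hypothesis forces $\lvert \pic(\twistedcurve)[2] \rvert = 2^{2g}$. Composing the bijection of part \ref{item_bijection_on_r_torsion_injection_on_generic} with the injection of part \ref{item_dvr_injection} of Proposition \ref{prop_deforming_line_bundles}, and then the Galois-invariant inclusion into the $2$-torsion over an algebraic closure, gives an injection $\pic(\twistedcurve)[2] \hookrightarrow \pic(\family_{\overline{\eta}})[2]$; since both groups have order $2^{2g}$, this must be an isomorphism. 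Non-degeneracy of the Weil pairing on the smooth curve $\family_{\overline{\eta}}$ (Remark \ref{remark-smooth-curve-pairing}) then transports to $\twistedcurve$ via Corollary \ref{corollary_pairing_consistent_general_case}.

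The main obstacle I anticipate is the bookkeeping around the algebraic versus geometric generic fiber: Proposition \ref{prop-pairing-good-over-family} is stated for $\genfib$, whereas the count $\lvert \pic[2] \rvert = 2^{2g}$ on the smooth side is most cleanly read on $\family_{\overline{\eta}}$. This should dissolve quickly because the Weil pairing is $\autgroup$-valued and therefore invariant under base field extension; Corollary \ref{corollary_pairing_consistent_general_case}, applied at the geometric point $\overline{\eta}$ of $\spec(B)$, packages exactly this stability.
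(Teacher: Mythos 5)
Your proposal is correct and follows essentially the same route as the paper: deform $\mathcal{X}$ to a smooth curve over a complete discrete valuation ring via Remark \ref{remark_versal_deformations}, lift $2$-torsion uniquely using Proposition \ref{prop_deforming_line_bundles}, transfer the pairing through Proposition \ref{prop-pairing-good-over-family}, and obtain non-degeneracy by the order count $2^{2g}$. Your extra care in distinguishing the generic from the geometric generic fiber (via Corollary \ref{corollary_pairing_consistent_general_case}) is a point the paper's proof passes over silently, and your handling of it is sound.
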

\begin{proof}
  \begin{sloppypar}
    There exists a family of curves as in Situation \ref{situation-family-of-curves} where $B$ is a complete discrete valuation ring, the special fiber $\specialfib$ is isomorphic to $\twistedcurve$, and the generic fiber is a smooth, proper curve. We know that the Weil pairing on $\genfib$ is bilinear, alternating and non-degenerate. By Proposition \ref{prop_deforming_line_bundles} and Proposition \ref{prop-pairing-good-over-family}, we can compute the Weil pairing on $\specialfib$ by lifting the $2\mbox{-torsion}$ line bundles uniquely to $\family$ and pairing them on $\genfib$. Since ${\pic(\family)[2] \xrightarrow{i^*} \pic(\genfib)[2]}$ is injective (See Proposition \ref{prop_deforming_line_bundles}),  we conclude that the Weil pairing on $\specialfib$ is bilinear and alternating. If the non-separating nodes have stabilizers of even order, then by counting we have a bijection ${\pic(\family)[2] \to \pic(\genfib)[2]}$. Therefore we conclude that the pairing is non-degenerate on $\specialfib$ in this case.
  \end{sloppypar}
\end{proof}
\section{Connection with tropical geometry}

In this section, we look at the \emph{tropical specialization} of line bundles from curves to graphs which arises when we have an arithmetic surface. We refer to \cite{Baker_specialization} for the notions and ideas in this regard. Let $B$ be a complete discrete valuation ring with the residue field $k$ and field of fractions $K$. 
\begin{definition}
  A semi-stable arithmetic surface over $B$ is a proper, flat scheme over $\spec(B)$ such that the generic fiber is a smooth, geometrically connected curve and the special fiber is a reduced curve with at worst nodal singularities. A strongly semi-stable arithmetic surface is a semi-stable arithmetic surface such that the irreducible components of the special fiber are smooth. 
\end{definition}
By a metric graph we mean a metric space obtained by viewing the edges of a graph as line segments of prescribed lengths (See Definition 1D, \cite{Baker_specialization}). We have notions of divisors and principal divisors on a metric graph for which we refer to \cite{Baker_specialization} and the references therein. The Picard group of a metric graph is the group of divisors modulo the principal divisors. We denote the degree zero Picard group by $\pic^0$ and it is isomorphic to the real torus $\reals^{g'}/\integers^{g'}$ where $g'$ is the genus of the graph (See Theorem 2.8, \cite{baker_metric_properties_of_abel_jacobi_map}). If $\Gamma$ is a graph, then the associated metric graph is obtained by viewing each edge in $\Gamma$ as a line segment of length one. We denote it by $\Gamma$ just the same.  
\begin{situation}\label{situation-arithmetic-surface}
  \begin{sloppypar}
    Let $C$ be a regular strongly semi-stable arithmetic surface over $\spec(B)$. Let $i: \genfibcse \to \familycse$ denote the generic fiber and $j: \specialfibcse \to \familycse$ denote the special fiber. Let $\Gamma$ denote the dual graph of $\specialfibcse$. We denote the geometric generic fiber by $\geogenfibcse$. Let $r$ be a positive integer invertible on $\spec(B)$. Let $g'$ denote the genus of the graph $\Gamma$.
  \end{sloppypar}
\end{situation}
In the above Situation \ref{situation-arithmetic-surface}, we have a degree preserving homomorphism ${\pic(\familycse) \xrightarrow{\rho} \pic(\Gamma)}$. If $L \in \pic(\familycse)$, then $\rho(L)$ is a divisor supported on the vertices corresponding to the irreducible components of $\specialfibcse$ and the corresponding coefficient is the degree of $L$ restricted to the associated component. The Zariski closure of a Cartier divisor on $\genfibcse$ gives a Cartier divisor on $\familycse$. Thus we get the \emph{tropical specialization} map ${\pic(\genfibcse) \xrightarrow{trop} \pic(\Gamma)}$. The specialization map is compatible with finite field extensions of $K$ and gives a map from the geometric Picard group ${\pic(\geogenfibcse) \xrightarrow{trop} \pic(\Gamma)}$ (See Section 2C, \cite{Baker_specialization}). Moreover, the morphism $\pic^{0}(\geogenfibcse) \to \pic^{0}(\Gamma)$ is surjective onto the torsion points (or equivalently rational points) of $\pic^{0}(\Gamma)$ (See Remark A.10, \cite{Baker_specialization}).
\begin{lemma}\label{lemma_ker_of_trop_deg_zero_on_components}
  \begin{sloppypar}
    A line bundle $L_{\eta} \in \pic(\genfibcse)$ lies in the kernel of the map ${\pic(\genfibcse) \xrightarrow{trop} \pic(\Gamma)}$ if and only if there     exists a line bundle $L \in \pic(C)$ such that $i^*L \cong L_{\eta}$ and the degree of $j^*L$ is zero on every irreducible component of $\specialfibcse$.
  \end{sloppypar}
\end{lemma}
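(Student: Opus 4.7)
The plan is to combine intersection theory on the regular arithmetic surface $\familycse$ with a combinatorial identification of line bundles supported on the special fiber as principal divisors on $\Gamma$. First I note that since $\familycse$ is regular, Zariski closure of Cartier divisors yields a surjection ${\pic(\familycse) \twoheadrightarrow \pic(\genfibcse)}$ whose kernel consists of line bundles of the form ${\structuresheaf_{\familycse}(\sum_{v \in V} n_v C_v)}$, where $C_v$ denotes the component of $\specialfibcse$ corresponding to the vertex $v$. Consequently, any two extensions of $L_\eta$ to $\familycse$ differ by such a line bundle, and the element $trop(L_\eta) = [\rho(L)] \in \pic(\Gamma)$ is independent of the chosen extension $L$.

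Next I would evaluate $\rho$ on twists supported on the special fiber. By the intersection-theoretic description of $\rho$, we have ${\rho(\structuresheaf_{\familycse}(C_v)) = \sum_{w \in V}(C_v \cdot C_w)[w]}$. Using that the total fiber $\sum_w C_w$ is a principal divisor on $\familycse$ (because the closed point of $\spec(B)$ is), one gets $(C_v \cdot C_v) = -\sum_{w \neq v}(C_v \cdot C_w)$, while for $w \neq v$ the intersection $(C_v \cdot C_w)$ is the number of edges of $\Gamma$ joining $v$ and $w$. Comparing with the graph Laplacian then identifies ${\rho(\structuresheaf_{\familycse}(C_v)) = -\Delta(\chi_v)}$, so ${\rho(\structuresheaf_{\familycse}(\sum n_v C_v)) = -\Delta(n)}$ is a principal divisor on $\Gamma$ for every $n \in \integers^V$.

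The ``if'' direction is then immediate: if $L$ extends $L_\eta$ and $\rho(L) = 0$ as a divisor, then $trop(L_\eta) = [\rho(L)] = 0$ in $\pic(\Gamma)$. For the ``only if'' direction, suppose $trop(L_\eta) = 0$ and let $L_0$ be any extension of $L_\eta$ (for instance the Zariski closure extension). Then $\rho(L_0)$ is a principal divisor on $\Gamma$ supported on $V$; I would pick $n \in \integers^V$ with $\rho(L_0) = \Delta(n)$ and set $L := L_0 \otimes \structuresheaf_{\familycse}(\sum n_v C_v)$, which satisfies $\rho(L) = 0$ and $i^*L \cong L_\eta$.

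The main obstacle is the last step of the ``only if'' direction, namely the combinatorial fact that a divisor supported on the vertex set $V$ which is principal in the metric-graph Picard $\pic(\Gamma)$ is actually a graph Laplacian of some integer-valued function on $V$. If such a divisor equals $\text{div}(f)$ for a $\integers$-PL function $f$ on the unit-length metric graph $\Gamma$, then vanishing of $\text{div}(f)$ at interior points of each edge forces $f$ to be affine with integer slope on each edge; after subtracting $f(v_0)$ for a base vertex, $f$ becomes integer-valued on $V$, and a direct computation identifies $\text{div}(f)$ with $\Delta(f|_V)$.
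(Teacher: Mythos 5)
Your proposal is correct and takes essentially the same route as the paper: extend $L_{\eta}$ over the regular model, note that any two extensions differ by a vertical divisor, and identify the image under $\rho$ of vertical divisors with principal divisors on $\Gamma$ so that the vertical twist can be chosen to kill the componentwise degrees. The only difference is one of presentation: the paper delegates these facts (including the implicit point that a principal divisor on the unit-length metric graph supported on the vertex set is the Laplacian of an integer-valued function on the vertices) to Section 2A of Baker's specialization paper, whereas you verify them directly via the intersection-theoretic computation $\rho(\mathcal{O}(C_v)) = -\Delta(\chi_v)$ and the piecewise-linear slope argument.
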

\begin{proof}
Let $D$ be the Zariski closure of a divisor on $\genfibcse$ that defines the line bundle $L_{\eta}$ on $\genfibcse$. Then $D$ is a Cartier divisor on $\familycse$. Moreover, $trop(L_{\eta}) = 0$ precisely means that there exists a vertical divisor $D'$ on $\familycse$ such that ${L = \structuresheaf_{\familycse}(D) \otimes \structuresheaf_{\familycse}(D')}$ satisfies the conditions stated in the lemma (See Section 2A, \cite{Baker_specialization} for details). 
\end{proof}
\begin{proposition}\label{prop_r_torsion_extends_to_family}
In Situation \ref{situation-arithmetic-surface}, we have the following:
  \begin{sloppypar}
    \begin{enumerate}
    \item $\lvert {\pic}(\specialfibcse)[r] \rvert = r^{2g - g'}$.
    \item For every line bundle $L_0$ in ${\pic}(\specialfibcse)[r]$, there exists a unique line bundle $L$ in ${\pic}(\familycse)[r]$ such that ${j^*L \cong L_0}$. 
    \end{enumerate}
  \end{sloppypar}
\end{proposition}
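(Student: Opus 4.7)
The plan is to derive part (1) from the normalization short exact sequence of a nodal curve, and part (2) by applying henselian lifting to the étale, separated root scheme constructed in the proof of Proposition \ref{prop_deforming_line_bundles}.

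For part (1), after base changing along the strict henselization if necessary (so that the residue field may be assumed separably closed), I would use the normalization $\nu: \widehat{\specialfibcse} = \coprod_{i \in V} C_i \to \specialfibcse$. The short exact sheaf sequence
$$ 1 \to \structuresheaf_{\specialfibcse}^* \to \nu_*\structuresheaf_{\widehat{\specialfibcse}}^* \to \mathcal{S} \to 1, $$
where $\mathcal{S}$ is the skyscraper sheaf recording the multiplicative identifications at each node, yields
$$ 0 \to (k^*)^{g'} \to \pic(\specialfibcse) \to \bigoplus_{i \in V} \pic(C_i) \to 0, $$
with the kernel identified as $\cohomology{1}{\Gamma}{k^*}$ and therefore of dimension $g'$. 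Since $r$ is invertible on $k$, the group $k^*$ is $r$-divisible, so the snake lemma produces
$$ 0 \to \mu_r^{g'} \to \pic(\specialfibcse)[r] \to \bigoplus_{i \in V} \pic(C_i)[r] \to 0. $$
Combining $|\pic(C_i)[r]| = r^{2g_i}$ with the arithmetic-genus relation $\sum_i g_i = g - g'$ gives $|\pic(\specialfibcse)[r]| = r^{g'} \cdot r^{2(g-g')} = r^{2g - g'}$.

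For part (2), I would form the root scheme $R = \rootscheme{\structuresheaf_{\familycse}}{r} \to \spec(B)$ from the proof of Proposition \ref{prop_deforming_line_bundles}; the construction applies verbatim to the schematic family $\familycse$, viewed as a family of twisted curves all of whose stabilizers are trivial. By Proposition 3.4 of \cite{chiodo_stable_twisted_and_r_spin}, $R$ is étale and separated over $\spec(B)$; by construction its $\spec(B)$-sections are precisely elements of $\pic(\familycse)[r]$, while its closed fiber is $\pic(\specialfibcse)[r]$. Since $B$ is complete and hence henselian, Theorem 4.2 of \cite{milne_etale_cohomology_book} gives a bijection between $\spec(B)$-sections of $R$ and $k$-rational points of its closed fiber, which translates directly into the statement that $j^*:\pic(\familycse)[r] \to \pic(\specialfibcse)[r]$ is a bijection.

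The main subtlety I expect is that $R$ is étale and separated but \emph{not finite} over $\spec(B)$: its generic fiber has $r^{2g}$ geometric points while the closed fiber has only $r^{2g-g'}$ by part (1). This asymmetry is precisely what lets me conclude unique lifting from the closed fiber while giving no information about which generic-fiber $r$-torsion line bundles extend. Once one is comfortable invoking henselian lifting for étale separated (rather than finite étale) covers, both parts then follow as above without any further input.
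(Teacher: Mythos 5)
Your proof is correct and essentially the paper's own argument: part (1) is the same normalization/graph count (the $r$-torsion generalization of the sequence used in Proposition \ref{prop_two_torsion_trivial_on_normalization}, with the kernel torus $(k^*)^{g'}$ and $r$-divisibility of $k^*$), and part (2) is exactly the root-scheme-plus-Henselian-lifting argument that the paper invokes by citing part (1) of Proposition \ref{prop_deforming_line_bundles}, which you simply unpack in place for the schematic family. Your remark that the root scheme is \'etale and separated but not finite over $\spec(B)$, so that lifting is only guaranteed from the closed fiber, matches how the paper uses it, so there is nothing to add.
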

\begin{proof}
  \begin{sloppypar}
    Let $\widehat{\specialfibcse}$ denote the normalization of $\specialfibcse$. Then we have an exact sequence:
    \[
      \begin{tikzcd}
        0 \arrow{r} & \cohomology{1}{\Gamma}{\integers/ r \integers} \arrow{r} & {\pic}(\specialfibcse)[r] \arrow{r} & {\pic}(\widehat{\specialfibcse})[r] \arrow{r} & 0. 
      \end{tikzcd}
    \]
    (For $r = 2$, the see proof of Proposition \ref{prop_two_torsion_trivial_on_normalization} and the general case is similar). Thus we have ${\lvert {\pic}(\specialfibcse)[r] \rvert = 2^{2g - g'}}$. The second part is a special case of part \ref{item_bijection_on_r_torsion_injection_on_generic} of Proposition \ref{prop_deforming_line_bundles}. 
  \end{sloppypar}
\end{proof}
\begin{lemma}\label{lemma-kertrop-r-divisible}
The kernel of the morphism $\pic^0(\genfibcse) \xrightarrow{trop} \pic^0(\Gamma)$ is $r\mbox{-divisible}$. 
\end{lemma}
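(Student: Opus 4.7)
The plan is to reduce the $r$-divisibility of $\ker(trop)$ to the $r$-divisibility of $\pic^0(\specialfibcse)$, and then to lift an $r$-th root from the special fiber across the arithmetic surface using the étale root scheme of Proposition \ref{prop_deforming_line_bundles}. Given $L_\eta \in \pic^0(\genfibcse)$ with $trop(L_\eta) = 0$, Lemma \ref{lemma_ker_of_trop_deg_zero_on_components} produces an extension $L \in \pic(\familycse)$ with $i^*L \cong L_\eta$ and $\deg(j^*L|_{C_v}) = 0$ for every irreducible component $C_v$ of $\specialfibcse$. I aim to produce $M \in \pic(\familycse)$ with $M^{\otimes r} \cong L$; then $M_\eta := i^*M$ will be an $r$-th root of $L_\eta$. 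Any such $M$ automatically satisfies $r \cdot \deg(j^*M|_{C_v}) = \deg(j^*L|_{C_v}) = 0$, and the integrality of component degrees on the regular arithmetic surface forces $\deg(j^*M|_{C_v}) = 0$ for each $v$; Lemma \ref{lemma_ker_of_trop_deg_zero_on_components} then places $M_\eta$ back in $\ker(trop)$.

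To construct $M$, I apply the root scheme construction used in the proof of Proposition \ref{prop_deforming_line_bundles} to the schematic family $\familycse \to \spec(B)$. Since $L$ has degree zero on every geometric fiber, the scheme $\rootscheme{L}{r} \to \spec(B)$ parameterizing $r$-th roots of $L$ is étale and separated. Because $B$ is complete, hence Henselian, any section of $\rootscheme{L}{r}$ over the closed point extends uniquely to a section over $\spec(B)$, so it suffices to exhibit an $r$-th root $M_0$ of $L_0 := j^*L$ inside $\pic(\specialfibcse)$.

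The existence of $M_0$ will follow from divisibility of $\pic^0(\specialfibcse)$. For the nodal curve $\specialfibcse$ over the algebraically closed field $k$, the generalized Jacobian fits in a short exact sequence
\[
1 \to T \to \pic^0(\specialfibcse) \to \prod_{v \in V(\Gamma)} \pic^0(\widehat{C_v}) \to 0,
\]
where $T \cong (k^*)^{g'}$ is an algebraic torus and the $\widehat{C_v}$ are the smooth components of the normalization. Both $T$ (since $k^*$ is divisible) and each Jacobian $\pic^0(\widehat{C_v})$ (as $k$-points of an abelian variety over an algebraically closed field) are divisible, hence so is $\pic^0(\specialfibcse)$, producing the desired $r$-th root $M_0$. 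The main subtlety is keeping track of the multidegree throughout: one must verify that the $r$-th root exhibited on the special fiber lies in the multidegree-zero part so that Lemma \ref{lemma_ker_of_trop_deg_zero_on_components} applies, and that its unique lift via $\rootscheme{L}{r}$ inherits this property, both of which are ensured by the integrality of degrees on components together with the étale, separated nature of $\rootscheme{L}{r}$.
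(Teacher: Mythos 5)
Your proposal follows essentially the same route as the paper: extend $L_\eta$ to a multidegree-zero line bundle $L$ on $\familycse$ via Lemma \ref{lemma_ker_of_trop_deg_zero_on_components}, produce an $r$-th root on the special fiber using the $r$-divisibility of $\pic^0(\specialfibcse)$ (divisibility of $k^*$ plus divisibility of the Jacobians of the normalized components), and lift that root across $\spec(B)$ using the \'etale, separated root scheme $\rootscheme{L}{r}$ over the Henselian base. The only cosmetic difference is your observation that integrality of component degrees on the schematic surface automatically forces any $r$-th root to have multidegree zero, which slightly streamlines the bookkeeping the paper does by choosing the root inside $\pic^0(\specialfibcse)$ from the start; both are correct.
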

\begin{proof}
\textbf{Step I:} Let $\pic^0(\specialfibcse)$ denote the subgroup of $\pic(\specialfibcse)$ consisting of those line bundles whose degree is zero on each irreducible component. We first claim that $\pic^0(\specialfibcse)$ is $r\mbox{-divisible}$. If $\specialfibcse^{1}, \specialfibcse^{2}, \ldots, \specialfibcse^{n}$ are the irreducible components of the normalization $\widehat{\specialfibcse}$, then we have the exact sequence:
\[
  \begin{tikzcd}
    0 \arrow{r} &  \text{ ker } \left(\pic(\specialfibcse) \to \pic(\widehat{\specialfibcse}) \right)  \arrow{r} &  {\pic}^{0}(\specialfibcse) \arrow{r} & \displaystyle\bigoplus_{i = 1}^{n} \pic^0(\specialfibcse^{i}) \arrow{r} & 0 .
  \end{tikzcd}
  \]
The group $\pic^{0}(\specialfibcse^{i})$ is $r\mbox{-divisible}$ for each $i$. We have a description of the kernel of the above map in the proof of Proposition \ref{prop_two_torsion_trivial_on_normalization}. Since $k^*$ is closed under taking $r^{th}$ roots, it follows that the kernel above is $r\mbox{-divisible}$. Therefore we conclude that $\pic^{0}(\specialfibcse)$ is $r\mbox{-divisible}$ as well.\\
\textbf{Step II:} Let $L_{\eta} \in \pic^0(\genfibcse)$ be a line bundle on $\genfibcse$ such that $trop(L)$ is zero. By Lemma \ref{lemma_ker_of_trop_deg_zero_on_components}, there exists a line bundle $L \in \pic(\familycse)$ which restricts to $L_{\eta}$ on $\genfibcse$ and such that the degree of $j^*L$ on each irreducible component of $\specialfibcse$ is zero. We have the scheme $\rootscheme{L}{r}$ which is \etale{} and separated over $\spec(B)$ that parametrizes $r^{th}$ roots of $L$ (See the proof of Proposition \ref{prop_deforming_line_bundles}). Since $j^*L$ belongs to $\pic^{0}(\specialfibcse)$, we conclude from the first step that $\rootscheme{L}{r}$ admits a section over the closed point of $\spec(B)$. Since $\rootscheme{L}{r}$ is \etale{} over $\spec(B)$, this section can be extended to all of $\spec(B)$. In other words, $L$ admits an $r^{th}$ root in $\pic(\familycse)$ whose degree on each irreducible component of $\specialfibcse$ is zero. Consequently, $L_{\eta}$ admits an $r^{th}$ root in $\pic(\genfibcse)$ which also lies in the kernel of the $trop$ map. 
\end{proof}
\begin{corollary}\label{corollary_trop_map_surjective_on_torsion}
  \begin{sloppypar}
 The map ${\pic^{0}(\geogenfibcse)[r] \xrightarrow{trop} \pic^{0}(\Gamma)[r]}$ is surjective (See Theorem 3.1, \cite{Jensen_Yoav_kernel_is_isotropic}).
  \end{sloppypar}
\end{corollary}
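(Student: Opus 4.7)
The plan is to deduce surjectivity on $r$-torsion from two facts established earlier in the section. First, the specialization map $\pic^0(\geogenfibcse) \xrightarrow{trop} \pic^0(\Gamma)$ is surjective onto the torsion (equivalently, rational) points of the real torus $\pic^0(\Gamma) \cong \reals^{g'}/\integers^{g'}$; this is the classical fact imported from \cite{Baker_specialization} in the paragraph preceding Lemma \ref{lemma_ker_of_trop_deg_zero_on_components}. Second, the kernel of $trop$ on $\pic^0(\genfibcse)$ is $r$-divisible by Lemma \ref{lemma-kertrop-r-divisible}. These two inputs combine via the standard principle that a surjective homomorphism of abelian groups with $r$-divisible kernel induces a surjection on $r$-torsion.

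To execute the argument, I would start with an arbitrary element $\alpha \in \pic^0(\Gamma)[r]$. Since $\alpha$ is a torsion (hence rational) point of $\reals^{g'}/\integers^{g'}$, the first input supplies some $L \in \pic^0(\geogenfibcse)$ with $trop(L) = \alpha$. Then $rL$ lies in $\ker(trop)$ because $r \alpha = 0$, and Lemma \ref{lemma-kertrop-r-divisible} produces an element $M \in \ker(trop)$ with $rM = rL$. Setting $N := L - M$, we have $trop(N) = \alpha$ and $rN = 0$, which exhibits the desired $r$-torsion preimage.

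The main obstacle was really absorbed into Lemma \ref{lemma-kertrop-r-divisible}, whose proof combined the $r$-divisibility of $\pic^0(\specialfibcse)$ with the \etale{}-local lifting of $r$-th roots from the closed point of $\spec(B)$. With that lemma in hand, the present corollary is essentially formal, and I do not anticipate any additional difficulty.
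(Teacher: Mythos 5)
Your argument is correct and follows essentially the same route as the paper: both rest on the surjectivity of $trop$ onto the rational points of $\pic^0(\Gamma)$ together with the $r$-divisibility of $\ker(trop)$ from Lemma \ref{lemma-kertrop-r-divisible}. The paper packages the final step as the vanishing of $\ext^1(\integers/r\integers,\ker(trop))$ in the long exact sequence obtained by applying $\hom(\integers/r\integers,-)$, which is just the homological form of your explicit element chase.
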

\begin{proof}
  \begin{sloppypar}
    We know that ${\pic^{0}(\geogenfibcse) \xrightarrow{trop} \pic^{0}(\Gamma)}$ is surjective onto the rational points of $\pic^{0}(\Gamma)$ (See Section 2C. in \cite{Baker_specialization}). We have the exact sequence:
\[
  \begin{tikzcd}
    0 \arrow{r} & \ker (trop) \arrow{r} & {\pic}^{0}(\geogenfibcse) \arrow{r}{trop} & {\pic}^{0}(\Gamma)_{\rationals}  \arrow{r} & 0 .
  \end{tikzcd}
\]
By the above Lemma \ref{lemma-kertrop-r-divisible}, we have ${\ext^1(\integers/r\integers,\ker(trop)) = 0}$. We apply the functor $\hom(\integers/r\integers,-)$ to the above exact sequence. The conclusion follows from the resulting long exact sequence in cohomology.
  \end{sloppypar}
\end{proof}
\begin{proposition}\label{prop_ker_of_trop_r_iff_specializes}
  \begin{sloppypar}
    A line bundle $L_{\eta}$ in $\pic(\genfibcse)[r]$ lies in the kernel of the tropical specialization ${\pic(\genfibcse) \xrightarrow{trop} \pic(\Gamma)}$ if and only if there exists a line bundle $L$ in $\pic(\familycse)[r]$ such that $j^*(L) \cong L_{\eta}$.
  \end{sloppypar}
\end{proposition}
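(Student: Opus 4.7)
The plan is to prove the two implications separately; the ``if'' direction is essentially immediate and the ``only if'' direction reduces to an intersection-theoretic computation on the arithmetic surface.

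For the ``if'' direction, suppose some $L \in \pic(\familycse)[r]$ restricts to $L_{\eta}$ on $\genfibcse$. Then $j^*L \in \pic(\specialfibcse)[r]$, and its degree on each irreducible component $\specialfibcse^{i}$ of $\specialfibcse$ is an integer annihilated by $r$, hence zero. Lemma \ref{lemma_ker_of_trop_deg_zero_on_components} then delivers $trop(L_{\eta}) = 0$.

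For the other direction, suppose $L_{\eta} \in \pic(\genfibcse)[r]$ satisfies $trop(L_{\eta}) = 0$. I first apply Lemma \ref{lemma_ker_of_trop_deg_zero_on_components} to produce some $L \in \pic(\familycse)$ with $i^*L \cong L_{\eta}$ and $\deg(j^*L|_{\specialfibcse^{i}}) = 0$ for every irreducible component $\specialfibcse^{i}$. The goal is then to show that this $L$ is automatically $r$-torsion, i.e., $L^r \cong \structuresheaf_{\familycse}$. Since $i^*L^r \cong L_{\eta}^r$ is trivial, $L^r$ lies in the kernel of $i^*\colon \pic(\familycse) \to \pic(\genfibcse)$. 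Because $\familycse$ is regular and strongly semi-stable, this kernel is generated by the classes of the vertical divisors $\{\specialfibcse^{i}\}$ modulo the single relation that their sum equals the principal divisor $\mathrm{div}(t)$ of a uniformizer $t$ of $B$; so one can write $L^r \cong \structuresheaf_{\familycse}(\sum_{i} a_{i}\specialfibcse^{i})$ for some integers $a_{i}$.

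The crucial step, which I expect to be the main content of the argument, is an intersection-theoretic computation on $\familycse$: on one hand $\deg(j^*L^r|_{\specialfibcse^{i}}) = r \cdot 0 = 0$, while on the other hand intersection theory gives $\deg(j^*L^r|_{\specialfibcse^{i}}) = \sum_{j} a_{j}\,(\specialfibcse^{j} \cdot \specialfibcse^{i})$. Using the vanishing $\specialfibcse^{i} \cdot \sum_{j}\specialfibcse^{j} = 0$ (coming from the principality of the fiber) to re-express the self-intersections in terms of cross-intersections, this rewrites as $-(L_{\Gamma}\vec{a})_{i}$, where $L_{\Gamma}$ is the Laplacian of the dual graph $\Gamma$. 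Connectedness of $\Gamma$ forces $\ker L_{\Gamma}$ to be spanned by the all-ones vector, so $\vec{a} = (c,c,\ldots,c)$ for a single integer $c$, and therefore $L^r \cong \structuresheaf_{\familycse}(c \cdot \mathrm{div}(t)) \cong \structuresheaf_{\familycse}$, as required. An alternative route would combine the $r$-divisibility of $\pic^{0}(\specialfibcse)$ (Step I in the proof of Lemma \ref{lemma-kertrop-r-divisible}) with the unique deformation of $r$-torsion from the special fiber (Proposition \ref{prop_r_torsion_extends_to_family}), but I expect the direct intersection-theoretic argument to be the cleanest.
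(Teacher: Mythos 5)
Your proposal is correct, but the hard direction is proved by a genuinely different mechanism than the paper's. The paper argues by counting: Corollary \ref{corollary_trop_map_surjective_on_torsion} (which rests on the $r$-divisibility of $\ker(trop)$, Lemma \ref{lemma-kertrop-r-divisible}) bounds the kernel of $trop$ on $\pic(C_{\eta})[r]$ by $r^{2g-g'}$ elements, while $\pic(C)[r]$ has exactly $r^{2g-g'}$ elements (Proposition \ref{prop_r_torsion_extends_to_family}), injects into $\pic(C_{\eta})[r]$ by the valuative criterion (part \ref{item_dvr_injection} of Proposition \ref{prop_deforming_line_bundles}), and lands inside the kernel by Lemma \ref{lemma_ker_of_trop_deg_zero_on_components}; the containment is therefore an equality. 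You instead upgrade the line bundle $L$ furnished by Lemma \ref{lemma_ker_of_trop_deg_zero_on_components} directly: since $C$ is regular, $\ker\bigl(\pic(C)\to\pic(C_{\eta})\bigr)$ is generated by the classes of the components of the special fiber, so $L^{r}\cong\structuresheaf_{C}(\sum_{j}a_{j}C_{0}^{j})$, and the relations $\sum_{j}a_{j}(C_{0}^{j}\cdot C_{0}^{i})=0$, i.e. the vanishing of the Laplacian of $\Gamma$ applied to $\vec{a}$, together with connectedness and principality of the reduced fiber, force $L^{r}\cong\structuresheaf_{C}$. This is valid and arguably cleaner: it is self-contained (no appeal to the surjectivity corollary, the divisibility lemma, or henselian lifting of torsion), and it shows in addition that the particular extension exhibiting tropical triviality is itself $r$-torsion. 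What it costs is intersection theory on the regular model: you should make explicit that $\deg\bigl(\structuresheaf_{C}(D)|_{C_{0}^{i}}\bigr)=D\cdot C_{0}^{i}$ holds even when $C_{0}^{i}$ lies in the support of $D$ (this is where regularity enters), and that the special fiber, hence $\Gamma$, is connected (Zariski connectedness, since $B$ is complete and $C_{\eta}$ is geometrically connected). The paper's route avoids intersection theory altogether and stays within the root-scheme/henselian machinery it develops for the stacky setting, reusing ingredients (Propositions \ref{prop_deforming_line_bundles} and \ref{prop_r_torsion_extends_to_family}) that are needed elsewhere anyway; your route, by contrast, is tied to the regular schematic surface of Situation \ref{situation-arithmetic-surface} but makes the link with the graph Laplacian, and hence with the tropical picture, completely transparent.
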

\begin{proof}
  \begin{sloppypar}
    By counting, we see that the kernel of ${\pic(\genfibcse)[r] \xrightarrow{trop} \pic(\Gamma)[r]}$ has cardinality at most $r^{2g-g'}$. By Proposition \ref{prop_r_torsion_extends_to_family}, we conclude that we have ${\lvert  \pic(\familycse)[r] \rvert = r^{2g-g'} }$. By part \ref{item_dvr_injection} of Proposition \ref{prop_deforming_line_bundles}, we know that the map ${\pic(\familycse)[r] \xrightarrow{j^*} \pic(\genfibcse)[r]}$ is injective and by Lemma \ref{lemma_ker_of_trop_deg_zero_on_components}, its image lies in the kernel of the $trop$ map. This completes the proof. 
  \end{sloppypar}
\end{proof}
\begin{remark}\label{remark_trop_specialization_vs_algebraic_specialization}
  We can interpret the above Proposition \ref{prop_ker_of_trop_r_iff_specializes} by saying that an $r\mbox{-torsion}$ line bundle in $\pic(\genfibcse)$ tropically specializes to zero if and only if it algebraically specializes (in the Zariski topology of the scheme $\rootscheme{\structuresheaf_{\familycse}}{r}$) to some $r\mbox{-torsion}$ line bundle on the special fiber. 
\end{remark}
\begin{proposition}\label{prop_ker_trop_isotropic}
  \begin{sloppypar}
    Suppose in Situation \ref{situation-arithmetic-surface}, we have $r = 2$ and that $\specialfibcse$ is totally degenerate. That is, the irreducible components of $\specialfibcse$ are all genus zero curves. Then the kernel of the map ${\pic^{0}(\geogenfibcse)[2] \xrightarrow{trop} \pic^{0}(\Gamma)[2]}$ is isotropic for the Weil pairing on $\geogenfibcse$. (See Proposition 4.3, \cite{Jensen_Yoav_kernel_is_isotropic}).
  \end{sloppypar}
\end{proposition}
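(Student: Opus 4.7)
The plan is to lift the two 2-torsion line bundles from $\ker(trop)$ to 2-torsion bundles on the arithmetic surface $\familycse$, transport the Weil pairing to the special fiber via Proposition \ref{prop-pairing-good-over-family}, and observe that total degeneracy forces the pairing to vanish because every component of the special fiber is $\mathbb{P}^1$.

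Let $L_\eta, L'_\eta \in \pic^0(\geogenfibcse)[2]$ both lie in $\ker(trop)$. The first step is to reduce to the case where both are defined on the (non-geometric) generic fiber $\genfibcse$. Since both are 2-torsion, they are defined over some finite separable extension $K'/K$; letting $B'$ be the integral closure of $B$ in $K'$ (again a complete DVR) and resolving the singularities of $\familycse \times_B B'$ yields a regular strongly semistable arithmetic surface over $B'$ whose special fiber is obtained from $\specialfibcse \otimes_k k'$ by inserting chains of $\mathbb{P}^1$s at the rational double points created by ramified base change. In particular, the new special fiber is still totally degenerate, the Weil pairing on the geometric generic fiber is unchanged, and both line bundles still lie in the kernel of the new tropical specialization (the extensions to the old total space pull back with degree zero on every exceptional component). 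We may therefore assume from the outset that $L_\eta, L'_\eta \in \pic^0(\genfibcse)[2] \cap \ker(trop)$.

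Next, by Proposition \ref{prop_ker_of_trop_r_iff_specializes} (with $r=2$), both line bundles lift to 2-torsion line bundles $L, L' \in \pic(\familycse)[2]$ restricting to $L_\eta, L'_\eta$ on the generic fiber. Viewing $\familycse \to \spec(B)$ as a family of twisted curves with all stabilizers trivial, Proposition \ref{prop-pairing-good-over-family} gives
$$\weilpairing{\genfibcse}{L_\eta}{L'_\eta} = \weilpairing{\specialfibcse}{j^*L}{j^*L'}.$$
Since $\specialfibcse$ is its own coarse space, Proposition \ref{prop_pairing_coming_from_coarsespace} applies with $f = \mathrm{id}$ and yields
$$\weilpairing{\specialfibcse}{j^*L}{j^*L'} \equiv \sum_{i=1}^{n} \weilpairing{\specialfibcse^i}{(j^*L)|_{\specialfibcse^i}}{(j^*L')|_{\specialfibcse^i}} \pmod{2},$$
where $\specialfibcse^1, \dots, \specialfibcse^n$ are the irreducible components of the normalization $\widehat{\specialfibcse}$. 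Total degeneracy means each $\specialfibcse^i$ is isomorphic to $\mathbb{P}^1$, so $\pic(\specialfibcse^i)[2] = 0$, every restriction is trivial, every summand vanishes, and we conclude $\weilpairing{\geogenfibcse}{L_\eta}{L'_\eta} = 0$.

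The main obstacle is the initial reduction: after ramified base change the total space generally acquires rational double point singularities at the original nodes, and restoring regularity requires resolution of singularities for arithmetic surfaces. What saves the argument is that any such resolution introduces only chains of $\mathbb{P}^1$s between the two branches of each resolved node, preserving total degeneracy and preserving membership in $\ker(trop)$ under pullback. Once this reduction is in place, the remainder is a direct assembly of Propositions \ref{prop_ker_of_trop_r_iff_specializes}, \ref{prop-pairing-good-over-family}, and \ref{prop_pairing_coming_from_coarsespace}.
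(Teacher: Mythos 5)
Your proposal is correct and follows essentially the same route as the paper's proof: lift the $2$-torsion classes in the kernel of the specialization to $2$-torsion line bundles on the arithmetic surface via Proposition \ref{prop_ker_of_trop_r_iff_specializes}, invoke constancy of the Weil pairing in the family (Proposition \ref{prop-pairing-good-over-family}), and conclude from total degeneracy, via Proposition \ref{prop_pairing_coming_from_coarsespace} and the vanishing of $2$-torsion on the genus-zero components, that the pairing on the special fiber is identically zero. The only divergence is your explicit base-change-and-resolution step reducing from the geometric generic fiber to the generic fiber; the paper leaves this implicit in the stated compatibility of the specialization map with finite extensions of $K$, and your added care here is sound and does not change the method.
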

\begin{proof}
  \begin{sloppypar}
  Since $\specialfibcse$ is totally degenerate, the genus of the graph $\Gamma$ is $g$ and the Weil pairing on $\pic(\specialfibcse)[2]$ is identically zero (See Proposition \ref{prop_pairing_coming_from_coarsespace}). By Proposition \ref{prop_r_torsion_extends_to_family}, we have an isomorphism ${\pic(\familycse)[2] \xrightarrow{j^*} \pic(\specialfibcse)[2]}$. By Proposition \ref{prop_ker_of_trop_r_iff_specializes}, we know that the kernel of the tropical specialization is precisely the restriction of $\pic(\familycse)[2]$ to $\genfibcse$. Since the Weil pairing is consistent in a family (See Proposition \ref{prop-pairing-good-over-family}), the conclusion follows. 
  \end{sloppypar}
\end{proof}
\begin{sloppypar}
Now we look at the situation when we have an arithmetic surface where the special fiber is a twisted curve. Suppose have a family of twisted curves as in Situation \ref{situation-family-of-curves}, where $B$ is a complete discrete valuation ring and the generic fiber $\genfib$ is a smooth curve. The coarse space $C \to \spec(B)$ is a semi-stable arithmetic surface. However, it is important to note that $C$ is \emph{not} regular at the nodes which are stacky. If $e$ is a node of $C$ whose stabilizer has order $r$, then an \etale{} local neighborhood of $C$ at $e$ is isomorphic to $\spec(B[x,y]/(xy - t^r))$ for some $t$ in the maximal ideal of $B$ (See part (ii), Proposition 2.2, \cite{_Martin_olsson_log_twisted_curves}). Since $t^r$ is not a uniformizer of $B$, we see that the $C$ fails to be regular at $e$. Let $\pi$ denote a uniformizer of $B$, and let $n$ be the positive integer such that the ideals $(t)$ and $(\pi^n)$ in $B$ are equal. Then a scheme regular at $e$, is obtained by a sequence of blow ups that modify the special fiber by introducing $nr-1$ number of rational bridges at $e$. By performing a similar operation of blow ups at all the nodes, we obtain a regular scheme $C^{reg}$.
\end{sloppypar}
\begin{proposition}\label{prop_divide_edges_into_r_parts}
  \begin{sloppypar}
 Let $\Gamma$ be a graph of genus $g$ with the vertex set $V$. We regard $\Gamma$ as a metric graph where each edge is assigned length one. For each edge, we introduce $r-1$ vertices placed equidistantly on it so that it is divided into $r$ equal parts. Let $V^{reg}$ be the union of $V$ together with these vertices. Then there are $r^{2g}$ linearly in-equivalent divisors supported on $V^{reg}$ that form the subgroup ${\pic}^{0}(\Gamma)[r]$. 
  \end{sloppypar}
\end{proposition}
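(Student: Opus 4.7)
The plan is to realize $\Gamma$ as the dual graph of a totally degenerate twisted curve $\mathcal{X}$ over $k$ in which every node has stabilizer of order $r$, deform $\mathcal{X}$ to a smooth generic fiber over a complete DVR, and apply the tropical-specialization machinery of Propositions \ref{prop_r_torsion_extends_to_family} and \ref{prop_ker_of_trop_r_iff_specializes} to the regular resolution of the coarse space. Without loss of generality I take $\Gamma$ connected, $k$ of characteristic coprime to $r$, and (for convenience in invoking Proposition \ref{prop_r_torsion_extends_to_family}) $g \geq 2$, with the cases $g = 0,1$ being trivial or reducing to the cycle case.

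I would construct $\mathcal{X}$ by gluing one copy of $\projective^1_k$ per vertex of $\Gamma$ along points prescribed by the edges of $\Gamma$ to obtain a nodal curve, and then replacing each node by a stacky node with cyclic stabilizer $\integers/r\integers$ via a root-stack construction. Since all components are rational, the arithmetic genus of $\mathcal{X}$ equals the genus $g$ of $\Gamma$. Using Remark \ref{remark_versal_deformations}, I would lift $\mathcal{X}$ to a family $\mathscr{X} \to \spec(B)$ over a complete DVR $B$ with smooth geometric generic fiber, arranging the deformation so that the parameter $t$ at each node is the uniformizer $\pi$ of $B$ (that is, $n = 1$ in the paragraph preceding the proposition). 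The coarse space $C$ then fails to be regular at every stacky node, and the blow-up resolution $C^{\mathrm{reg}} \to C$ introduces exactly $r - 1$ rational bridges at each such node. Consequently $C^{\mathrm{reg}}$ is a regular strongly semi-stable arithmetic surface whose special fiber $C^{\mathrm{reg}}_0$ has all components $\projective^1$ and dual graph $\Gamma^{\mathrm{reg}}$: the subdivision of $\Gamma$ in which every edge is split into $r$ equal parts, with vertex set $V^{\mathrm{reg}}$. Because all components of $C^{\mathrm{reg}}_0$ are rational, the genus of $\Gamma^{\mathrm{reg}}$ equals the arithmetic genus of $C^{\mathrm{reg}}_0$, which equals $g$.

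Next I would carry out the counting via the tropical machinery. Applying Proposition \ref{prop_r_torsion_extends_to_family} to $C^{\mathrm{reg}}$ gives $|\pic(C^{\mathrm{reg}}_0)[r]| = r^{2g - g} = r^{g}$ with each class lifting uniquely to $C^{\mathrm{reg}}$, and Proposition \ref{prop_ker_of_trop_r_iff_specializes} identifies this group with the kernel of the tropical specialization $trop$ from $\pic^{0}$-$r$-torsion of the geometric generic fiber $C^{\mathrm{reg}}_{\bar\eta}$ to $\pic^{0}(\Gamma^{\mathrm{reg}})[r]$, with $\Gamma^{\mathrm{reg}}$ carrying unit edge lengths. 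Since $|\pic^{0}(C^{\mathrm{reg}}_{\bar\eta})[r]| = r^{2g}$ and $trop$ is surjective onto $r$-torsion by Corollary \ref{corollary_trop_map_surjective_on_torsion}, the image of $trop$ equals all of $\pic^{0}(\Gamma^{\mathrm{reg}})[r]$, which thus has order $r^{g}$. Because tropical specialization always produces divisors supported on dual-graph vertices, every class in $\pic^{0}(\Gamma^{\mathrm{reg}})[r]$ is represented by a divisor on $V^{\mathrm{reg}}$.

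Finally I would identify $\pic^{0}(\Gamma^{\mathrm{reg}})[r]$ (on $\Gamma^{\mathrm{reg}}$ with unit edges) with $\pic^{0}(\Gamma)[r]$ (on $\Gamma$ with unit edges). These two metric graphs share the same underlying topological graph and are related by a global rescaling of the metric by the factor $r$; the canonical rescaling isomorphism between their Jacobians sends classes represented by divisors on $V^{\mathrm{reg}} \subset \Gamma^{\mathrm{reg}}$ to classes represented by the same divisors viewed in $\Gamma$, inducing the desired bijection on $r$-torsion subgroups. The main step I expect will require care is this final metric/slope bookkeeping: one must verify that the integer-slope principal divisors supported on $V^{\mathrm{reg}}$ in the two metrics cut out the same subgroup of $\mathrm{Div}(V^{\mathrm{reg}})$, which comes down to the observation that the slope of a piecewise linear function and its divisor both rescale by $r$ between the two metrics in compatible ways.
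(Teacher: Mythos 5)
Your construction is the same as the paper's---a totally degenerate twisted curve with dual graph $\Gamma$ and stabilizer $\integers/r\integers$ at every node, a deformation over a complete DVR as in Remark \ref{remark_versal_deformations}, and the resolved coarse model $C^{reg}$ whose special fiber has dual graph the $r$-fold subdivision of $\Gamma$ with vertex set $V^{reg}$---and your extra care with the rescaling of edge lengths addresses a point the paper passes over with the phrase ``up to scaling''. But the decisive counting step has a genuine gap. You invoke Corollary \ref{corollary_trop_map_surjective_on_torsion}, which is surjectivity of $trop$ from the \emph{geometric} generic fiber, and then assert that ``tropical specialization always produces divisors supported on dual-graph vertices''. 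That support statement is only valid for classes defined over $K$ itself, specialized through the fixed regular model $C^{reg}$: there the Zariski closure of a $K$-rational divisor meets the special fiber along components of $C^{reg}_0$, so the resulting divisor is supported on $V^{reg}$. A class defined only over a ramified extension $K'$ must be specialized through a regular semistable model over the integral closure of $B$ in $K'$, and its image is then supported merely at rational points of the metric graph obtained by further subdivision; claiming it can nonetheless be represented on $V^{reg}$ is exactly the proposition being proved, so as written the argument is circular. (Note also that Proposition \ref{prop_ker_of_trop_r_iff_specializes} is a statement about the generic fiber over $K$, not the geometric one.)

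The missing ingredient---and the reason the Picard group of the stack enters the paper's proof---is that all geometric $r$-torsion of the generic fiber is already $K$-rational. Nothing in your counting gives this: you only use the coarse regular model, where Propositions \ref{prop_r_torsion_extends_to_family} and \ref{prop_ker_of_trop_r_iff_specializes} yield $\lvert \pic(C^{reg})[r]\rvert = r^{g}$ and identify its restriction to the generic fiber with the kernel of $trop$ on $K$-rational $r$-torsion; a priori the $K$-rational $r$-torsion could be exactly that kernel, in which case your argument exhibits no nonzero class of $\pic^{0}(\Gamma)[r]$ on $V^{reg}$. For totally degenerate reduction, rationality of the $r$-torsion genuinely can fail (a Tate curve whose parameter has valuation prime to $r$), and here it holds only because each node of the coarse model has thickness $r$. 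The paper extracts this via the stack: since every node has stabilizer of order $r$, $\lvert\pic(\mathcal{X}_0)[r]\rvert = r^{2g}$ (the $r$-analogue of Proposition \ref{prop_exact_sequence_picard_two_torsion_of_twisted}), and by Proposition \ref{prop_deforming_line_bundles} (or Remark \ref{remark_r_divisible_stabs}) this group lifts to the family and injects into the Picard group of the generic fiber, so all $r^{2g}$ geometric torsion classes are $K$-rational; only then does Corollary \ref{corollary_trop_map_surjective_on_torsion} give surjectivity from $K$-rational torsion, whose specializations are honestly supported on $V^{reg}$. Incidentally, the subgroup you realize on $V^{reg}$ is $\pic^{0}(\Gamma)[r]$, which has order $r^{g}$; the $r^{2g}$ in the statement counts the torsion line bundles upstairs rather than the pairwise inequivalent divisor classes on the graph.
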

\begin{proof}
  \begin{sloppypar}
    We can conceive of a twisted curve $\specialfib$ over $k$ of genus $g$ such that the following holds:
    \begin{itemize}
    \item The stabilizer of every node is a cyclic group of order $r$.
    \item The dual graph of $\specialfib$ is isomorphic to $\Gamma$.
    \end{itemize}
There exists a family of curves $\family$ over a complete discrete valuation ring $B$ as in Remark \ref{remark_versal_deformations} where the special fiber is isomorphic to $\specialfib$ and the generic fiber is smooth. Let ${C \to \spec(B)}$ denote the coarse space of $\family$. Since the stabilizers of the nodes are cyclic groups of order $r$, we have $\lvert \pic(\specialfib)[r] \rvert = r^{2g}$ (See Proposition \ref{prop_exact_sequence_picard_two_torsion_of_twisted} for $r=2$, and the general case is similar. See Corollary 3.1, \cite{chiodo_stable_twisted_and_r_spin}). We have ${\pic(\family)[r] \cong \pic(\specialfib)[r]}$ and the pull back map ${\pic(\family)[r] \to \pic(\genfib)}$ is injective (See Proposition \ref{prop_deforming_line_bundles}). Thus we have ${\lvert \pic(\genfib)[r] \rvert = r^{2g}}$. The \etale{} local structure of the coarse space $C$ at the nodes is as outlined in Remark \ref{remark_versal_deformations}. A consequence of this is that $C^{reg}$, which is obtained by a sequence of blowups as discussed above, has $r-1$ rational bridges for each node of $C$. Therefore the dual graph of $C^{reg}$ is isomorphic to $\Gamma$ (up to scaling) but with a larger vertex set $V^{reg}$. The generic fiber of $C^{reg}$ is isomorphic to $\genfib$ and therefore we have ${\pic(\genfibcse)[r] = r^{2g}}$. Therefore, we deduce that the tropical specialization ${\pic(\genfibcse)[r] \xrightarrow{trop} \pic(\Gamma)[r]}$ is surjective (See Corollary \ref{corollary_trop_map_surjective_on_torsion}). Since the image of this morphism is supported on $V^{reg}$, we conclude the result of the proposition.
  \end{sloppypar}
\end{proof}
\begin{remark}\label{remark_combinatorial_sanity_check}
  By having stabilizers only for the non-separating nodes, we see that the above Proposition \ref{prop_divide_edges_into_r_parts} also holds when we only divide the non-separating edges into $r$ equal parts (See Remark \ref{remark_edge_disjoint_cycles}). Also, we note that this is straightforward to prove combinatorially so that Proposition \ref{prop_divide_edges_into_r_parts} serves more as a consistency check and another illustration of the interplay between combinatorial tropical geometry and algebraic geometry of curves. 
\end{remark}
} 

\bibliography{tropical_bib} 
\bibliographystyle{ieeetr}
\end{document}